\documentclass[oneside, reqno]{amsart}

\newcommand{\hide}[1]{}

\usepackage{amssymb}
\usepackage{graphicx}
\usepackage{amsmath}

\usepackage{enumitem}
\usepackage{bm}

\usepackage{mathtools}
\usepackage{graphics}
\usepackage{xcolor}
\usepackage{multicol}

\usepackage{textpos}
\usepackage{subfigure}
\usepackage{framed} 
\usepackage{tikz}
\usepackage{tikz-cd}
\usepackage{commath}
\usepackage{nicematrix}
\usepackage{geometry}
\usepackage{float}

\usepackage[doi=false,isbn=false,url=false,eprint=false, style=alphabetic, backend=bibtex,maxbibnames=99]{biblatex}
\usetikzlibrary{shapes, arrows.meta, positioning}

\usepackage[colorlinks=true,urlcolor=blue]{hyperref}

\DeclareMathAlphabet{\mathbbold}{U}{bbold}{m}{n}

\newcommand{\ITM}{\mathrm{ITM}}
\newcommand{\IET}{\mathrm{IET}}

\newcommand{\eps}{\epsilon}

\newcommand{\R}{\mathbb{R}}

\newcommand{\betas}{\beta_*}
\newcommand{\betass}{\beta_{**}}

\renewcommand{\ge}{\geqslant}
\renewcommand{\le}{\leqslant}

\theoremstyle{plain}
\newtheorem{theorem}{Theorem}[section]
\newtheorem{proposition}[theorem]{Proposition}

\newtheorem*{conjecture*}{Conjecture}
\newtheorem*{perturbation-lemma}{First Return Map Perturbation Lemma}
\newtheorem*{lin-dep-ret-vec}{Transversality of Itinerary Subspaces}
\newtheorem*{mconjecture}{Boshernitzan--Kornfeld Conjecture}

\newtheorem*{theorem*}{Theorem}
\newtheorem{lemma}[theorem]{Lemma}
\newtheorem{corollary}[theorem]{Corollary}

\theoremstyle{remark}
\newtheorem{question}[theorem]{Question}

\theoremstyle{definition}
\newtheorem{definition}[theorem]{Definition}

\newtheorem{remark}[theorem]{Remark} 

\numberwithin{figure}{section}

\newcounter{lstv}
\newenvironment{lstv}{%
\refstepcounter{lstv}%
\begin{center}
\begin{minipage}{.9\textwidth}}{%
\end{minipage}%
\makebox[.1\textwidth][r]{(*)}%
\end{center}}

\newcounter{lsta}
\newenvironment{lsta}{%
\refstepcounter{lsta}%
\begin{center}
\begin{minipage}{.9\textwidth}}{%
\end{minipage}%
\makebox[.1\textwidth][r]{(**)}%
\end{center}}

\title[Transversality for Interval Translation Maps]{Transversality for Interval Translation Maps}
\author[Kostiantyn Drach]{Kostiantyn Drach}
\author[Leon Staresinic]{Leon Staresinic}
\author[Sebastian van Strien]{Sebastian van Strien}

\address{Universitat de Barcelona (Gran Via de les Corts Catalanes, 585, 08007 Barcelona, Spain)}
\address{Centre de Recerca Matem\`atica (Edifici C, Carrer de l'Albareda, 08193 Bellaterra, Spain)}
\email{kostiantyn.drach@ub.edu}

\address{University of Z\"urich (190 Winterthurerstrasse, 8057 Z\"urich, Switzerland)}
\email{leon.staresinic@math.uzh.ch}

\address{Imperial College London (180 Queen's Gate, South Kensington, London SW7 2AZ, UK)}
\email{s.van-strien@imperial.ac.uk}

\thanks{The first author was partially supported from grants CNS2025-166633 (AEI), PID2023-147252NB-I00 (AEI), CEX2020-001084-M (Maria de Maeztu Excellence program), and the ERC Advanced Grant ``SPERIG'' (\#885707). The second author acknowledges the support from Imperial College London through the Roth PhD scholarship and the Swiss National Science foundation through grant number TMCG-2\_213663·2023. The third author acknowledges a partial sponsorship via Bj\"orn Winckler's Marie Curie postdoctoral fellowship \#743959.
\newline 
\indent The content of this paper is part of an earlier manuscript \cite{drach2025densitystableintervaltranslation}, which is available on arXiv. That manuscript has been split into a three-part series: the present paper, \cite{drach2026characterisationstabilityintervaltranslation}, and \cite{drach2026topologicalprevalencefinitetype}.}

\begin{document}

\begin{abstract} An \emph{interval translation map} ($\ITM$) is a piecewise translation $T \colon I \to I$ defined on a finite partition $I_1, \ldots, I_r$ of an interval $I$ into $r \ge 2$ subintervals. In contrast to classical interval exchange transformations ($\IET$s), we do not require that the images of these subintervals are disjoint; in particular, $\ITM$s are not assumed to be bijective. Thus, $\ITM$s provide a natural non-invertible generalisation of $\IET$s. 

In this paper, we prove a transversality theorem for a family of dynamically defined vector subspaces that encode the dynamics of a given $\ITM$. As a consequence, we establish a perturbation result that gives a precise control of the first return dynamics to subintervals in $I$, while preserving the remaining global dynamics of the system. Beyond their independent interest, these results are a key technical ingredient in the proof of the Characterisation of Stability of $\ITM$s in~\cite{drach2026characterisationstabilityintervaltranslation} and in the establishment of the topological version of the Boshernitzan--Kornfeld Conjecture in~\cite{drach2026topologicalprevalencefinitetype}.

\end{abstract}

\maketitle


\section{Introduction}

\emph{Interval translation maps} ($\ITM$s) are piecewise isometries of an interval that form a natural non-invertible generalisation of the classical interval exchange transformations ($\IET$s), obtained by removing the bijectivity requirement. Throughout the paper, we fix the interval $I := [0,1)$. An $\ITM$ on $r \ge 2$ intervals is a map $T \colon I \to I$ determined by the following data:

\begin{itemize}
    \item a partition defined by \emph{discontinuity points} 
    $0 < \beta_1 < \beta_2 < \dots < \beta_{r-1} < 1$, and
    \item \emph{translation parameters} $\gamma_1, \dots, \gamma_r$.
\end{itemize}

Setting $\beta_0=0$ and $\beta_r=1$, the map acts by translation on each subinterval:
\[
T(x) = x + \gamma_i 
\quad \text{for } x \in [\beta_{i-1}, \beta_i),
\quad 1 \le i \le r.
\]
The condition $T(I) \subset I$ imposes the constraints $\gamma_i \in [-\beta_{i-1},\, 1-\beta_i]$ for all $i$. Hence, the parameter space $\ITM(r)$ of $\ITM$s on $r$ intervals naturally identifies with a convex polytope in $\R^{2r-1}$; we equip $\ITM(r)$ with the subspace topology.

A fundamental distinction from $\IET$s is that a typical $T \in \ITM(r)$ is not surjective; in general,
\[
T(I) \subsetneq I.
\]
This produces a nested sequence of forward images
\[
I \supsetneq T(I) \supseteq T^2(I) \supseteq T^3(I) \supseteq \dots.
\]
For $n \ge 0$ we define
\[
X_n := T^n(I), 
\qquad 
X := \bigcap_{n=0}^{\infty} X_n.
\]
Each $X_n$ is a finite union of intervals, and therefore $X$ is a non-empty \emph{attractor}. This leads to a fundamental dichotomy: an $\ITM$ is said to be of \emph{finite type} if $X_{n+1}=X_n = X$ for some $n$, and of \emph{infinite type} if $X_{n+1} \subsetneq X_n$ for all $n$.

The first explicit example of an infinite type map was constructed by Boshernitzan and Kornfeld in~\cite{MR1356616} for $r=3$. They conjectured that such behaviour should be rare:

\begin{mconjecture}
\label{conj:inf-type-zero}
For every $r \ge 2$, the set of infinite type $\ITM$s on $r$ intervals 
has Lebesgue measure zero in $\ITM(r)$.
\end{mconjecture}

This conjecture has driven much of the research on $\ITM$s, yet it remains widely open except in a few special cases. 
The first progress was made for $r=3$ within a special two-parameter family of $\ITM$s, where the conjecture was proved by Bruin and Troubetzkoy in~\cite{MR2013352}. 
This family admits a renormalization scheme analogous to the Rauzy--Veech induction for $\IET$s. The dynamical properties of this renormalization is a key ingredient in the proof that the set of infinite type maps has zero measure, just as Rauzy--Veech induction is a key ingredient in establishing that almost every minimal $\IET$ is uniquely ergodic (~\cite{MR644018,MR644019}) and weakly mixing (\cite{MR2299743}). Generalising the results for Rauzy-Veech induction needed in the proofs of these theorems, generic unique ergodicity and weak mixing for infinite type maps in the family from ~\cite{MR2013352} were established in ~\cite{MR4973368} and ~\cite{artigiani2026typicalweakmixingexceptional}, respectively (see also \cite{bruin2023interval}).

A related renormalization approach was later developed for $r \le 4$ within a broader class of \emph{double rotations}, introduced in~\cite{MR2152403} and further studied in~\cite{MR2966738,MR4397159}. 
Eventually, the conjecture was established in full for $r=3$ in~\cite{MR3124735} by showing that almost every $\ITM$ on three intervals can be renormalized to a double rotation. 
However, beyond these cases, there has been little progress toward a general renormalization theory for $\ITM$s with an arbitrary number of intervals, apart from some special situations (see~\cite{MR2308208}).

The Boshernitzan--Kornfeld conjecture concerns measure-theoretic genericity in $\ITM(r)$. 
A complementary notion of genericity is topological: one seeks to prove that a given property holds on a dense or residual (Baire generic) subset of the parameter space. 
In dynamical systems, one of the principal tools for establishing topological genericity is perturbation theory. 
Roughly speaking, perturbation results allow one to modify the system arbitrarily slightly so as to realise prescribed local dynamical behaviour, while keeping the remaining global dynamics under control. 
By performing such modifications inductively, one obtains arbitrarily small perturbations that induce global dynamical changes, thereby proving the genericity of the required property.

This strategy has been successfully applied in many contexts. 
In~\cite{MR593819}, perturbations were used to show that a dense $G_\delta$ subset of irreducible $\IET$s is uniquely ergodic. 
In~\cite{MR1373945}, geometric perturbations established that an open and dense set of foliations on affine surfaces is of Morse--Smale type, implying that an open and dense set of \emph{affine} $\IET$s is Morse--Smale. 
In smooth dynamics, perturbation techniques underlie classical genericity results such as density of structural stability for flows on surfaces~\cite{MR101951,MR142859}, the $C^1$ Closing Lemma~\cite{MR226669}, existence of homoclinic points for symplectomorphisms~\cite{MR331435}, and characterisation of structural stability in the symplectic setting~\cite{MR455049}. 
More recently, perturbation arguments were used in~\cite{MR4887204} to show that the attractor of a generic piecewise contraction consists of periodic orbits.

The situation for $\ITM$s is substantially more rigid. 
Unlike in smooth dynamics, one cannot perturb the map locally while leaving it unchanged elsewhere: any change in the parameters $\gamma_i$ and $\beta_i$ affects the forward iterates of all points in $I$. 
This holds for $\IET$s as well, but in this case, it is easier to produce explicit desirable perturbations that keep the global dynamics under control. 
Moreover, $\ITM$s lack an associated geometric theory (e.g.\ the theory of translation flows on surfaces); in these geometric settings, the additional flexibility of two-dimensional objects often facilitates perturbation arguments, from which results for (affine or classical) $\IET$s can then be deduced. Finally, $\ITM$s have a fixed derivative equal to $1$ (another form of rigidity), and therefore do not support the perturbations used for contracting maps.

Our approach to perturbation theory for $\ITM$s is therefore necessarily different. 
Starting from a finite type map $T$, the objective is to modify the \emph{first return map} $R_J$ to a connected component $J$ (assumed to be an interval) of $X$, while keeping the rest of the dynamics under control. The map $R_J$ is a bijection, and its domain is equal to the entire interval $J$ (Lemma \ref{lem:rj-facts}). Moreover, $J$ is partitioned into finitely many maximal half-open subintervals, which we call \emph{continuity intervals of $R_J$}, such that no point in their interiors lands on a discontinuity of $T$ before returning to $J$.

We first show that the first return map $R_J$ to $J$ can be naturally encoded by certain \emph{dynamically defined vectors} (see Subsection~\ref{subsec:prod}). These vectors describe how the dynamical properties of $R_J$ vary under small perturbations. A prescribed dynamical modification of $R_J$ can be realised by an arbitrarily small perturbation precisely when the corresponding encoding vectors are linearly independent. Similarly, controlling the dynamics of $T$ \emph{outside} the orbit of $J$ requires independence of additional families of dynamically defined vectors.

The main result of this paper is Theorem~\ref{thm:lin-dep}, which characterises the only possible linear dependencies among these natural dynamically defined vectors. 
The full statement is somewhat technical and requires preparation; we therefore refer to Subsection~\ref{subsec:statement-lin-indep} for details. 
Two important consequences of this theorem, which we state below and which are of independent interest, are central to the proofs in the sequel papers on the Characterisation of Stability~\cite{drach2026characterisationstabilityintervaltranslation} and on the Topological Boshernitzan--Kornfeld Conjecture~\cite{drach2026topologicalprevalencefinitetype}.

The first consequence concerns the intersection of subspaces generated by vectors encoding the dynamics of the return map to an interval component $J$ of $X$. Each continuity interval $J_j \subset J$ of $R_J$ determines two collections of vectors, corresponding to dynamical events at its left and right endpoints (the discontinuities of $R_J$). These itineraries describe the dynamics within each such $J_j$. There are three types of such vectors, for each left and right discontinuity:
\begin{itemize}
    \item \emph{First landing vectors}, describing finite itineraries of discontinuity points of $R_J$ before the first time they land at discontinuities of $T$;
    \item \emph{Critical connection vectors}, describing finite itineraries of discontinuities of $T$ before they land at discontinuities of $T$;
    \item \emph{Return vectors}, describing the part of the itinerary of a discontinuity of $R_J$ between the last landing at a discontinuity of $T$ and the return to $J$. 
\end{itemize}
Note that there might be $0$ and at most $2r-4$ critical connection vectors, and that each discontinuity of $R_J$ has at least two vectors associated with it. For each continuity interval $J_j \subset J$ of $R_J$ we denote by $V^+_j$, respectively $V^-_j$, the subspaces spanned by the three types of vectors associated with the left, respectively right, endpoint of $J_j$. We call them \emph{the positive}, respectively \emph{negative}, \emph{itinerary subspaces of $J_j$}.

The following theorem is the first main result of the paper (for the complete statement, see Lemma~\ref{lem:lin-dep-rj}).

\begin{lin-dep-ret-vec}
Let an interval $J \subset I$ be a connected component of the attractor $X$, and let $J_j \subset J$, with $j \in \{1,\dots,N\}$, be the continuity intervals of the first return map $R_J$ to $J$. Then:
\begin{enumerate}
    \item For every $j$, the intersection $V^+_j \cap V^-_j$ is one-dimensional.
    \item For every $k \neq j$, the subspaces $V^\pm_k$ and $V^\pm_j$ are transversal, i.e.
    \[
    V^\pm_k \cap V^\pm_j = \{0\}
    \quad \text{for every choice of signs.}
    \]
\end{enumerate}
\end{lin-dep-ret-vec}

Informally, the theorem asserts that the dynamically defined subspaces intersect only in the minimal way forced by the dynamics and are otherwise transverse. 
This transversality phenomenon is the guiding principle of the paper and motivates its title. 
We refer to Subsections~\ref{subsec:statement-lin-indep} and~\ref{subsec:two-consequences} for further discussion.

The second consequence is the Perturbation Lemma \ref{lem:pert-lem} that allows for detailed control over the return map dynamics to a chosen interval while also keeping the remaining global dynamics under control.

\hide{
\begin{perturbation-lemma}
Let $T$ be an interval translation map such that $T(I)$ is compactly contained in the interior of $I$. Let $J$ be one of the following: an interval component of $X$ (Definition \ref{def:comp-int}) or a maximal periodic interval (Definition \ref{def:max-per-int}). Then there exists an $\epsilon_0 > 0$ depending on $J$ and $T$, with the following properties. For every $\epsilon < \epsilon_0$ and every choice of $\epsilon^{\gamma}_1, \dots \epsilon^{\gamma}_{N}, \epsilon^{\beta}_0, \dots \epsilon^{\beta}_{N} \in (-\epsilon, \epsilon)$ there exists a perturbation $\Tilde{T}$ such that $|\Tilde{T} - T| \to 0$ as $\epsilon \to 0$, and the following holds:

\begin{enumerate}
    \item There exists an interval $\Tilde{J} \subset I$ that is $\epsilon$-close to $J$ and partitioned into intervals $\Tilde{J}_j = [\Tilde{a}_{j-1},\Tilde{a}_j)$, with $1 \le j \le N$, such that $\Tilde{J}_j$ maps forward continuously up to time $r_j$ under the iterates of $\Tilde{T}$ and has the same itinerary up to time $r_j$ as $J_j$ for all $1 \le j \le N$. In the case when $J$ is an interval component of $X$, we may set $\Tilde{a_j} - a_j = \epsilon^{\beta}_j$ for all $0 \le j \le N$;
    \item The difference between the translation factors (Definition \ref{def:trans-fac}) of $\Tilde{J}_j$ and $J_j$ is $\epsilon^{\gamma}_j$ for all $1 \le j \le N$, i.e.\ $\Tilde{T}r(\Tilde{J}_j,r_j) - Tr(J_j,r_j) = \epsilon^{\gamma}_j$.
\end{enumerate}
Moreover, we have that:

\begin{enumerate}[label=(\alph*)]
    \item Let $T^n(\beta^+) = \betas^+$ be a critical connection such that $\beta^+$ and $T^n(\beta^+)$ are both contained in the $T$-orbit of $J_j$ up to time $r_j$ for some $1 \le i \le N$. We may assume that $\Tilde{\beta}^+$ is still contained in the $\Tilde{T}$-orbit of $\Tilde{J}_j$ up to time $r_j$ and the difference $\Tilde{T}^n(\Tilde{\beta}^+) - \Tilde{\beta}_*^+$ can be chosen arbitrary in $[0,\epsilon)$. Analogously for critical connections $T^n(\beta^-) = \betas^-$ and the difference $ \Tilde{\beta}_*^- - \Tilde{T}^n(\Tilde{\beta}^-)$;
    \item For every critical connection $T^n(\beta) = \betas$, with $\beta,\betas \notin O(J)$, such that either $\beta \notin X$ or $\beta, \betas$ are part of a single periodic orbit, the difference $T^n(\beta) - \betas$ can be chosen arbitrary in $(-\epsilon,\epsilon)$.
\end{enumerate}
\end{perturbation-lemma}
}

\begin{perturbation-lemma}
Let $J \subset I$ be a connected component of the attractor $X$, and $R_J$ be the first return map to $J$ with continuity intervals $J_1, \ldots, J_N$. Then there exists $\eps_0 > 0$ (depending on $J$ and $T$) such that for every $\eps \in (0, \eps_0)$ the following holds:

For every perturbation $\tilde J = \sqcup_{j=1}^N \tilde J_j$ of the partition $J= \sqcup_{j=1}^N J_j$, and for every perturbation $\tilde I_j$, $j \in \{1, \ldots, N\}$, of the return intervals $R_J(J_j)$ so that 
\begin{itemize}
    \item each $\tilde J_j \subset I$ is within an $\eps$-neighbourhood of the corresponding $J_j$;
    \item each $\tilde I_j$ is within an $\eps$-neighbourhood of the corresponding interval $R_J(J_j)$ and $|\tilde I_j| = |\tilde J_j|$, 
\end{itemize}
there exists a perturbation $\tilde T \in \ITM(r)$ of $T$ with the following properties:
\begin{enumerate}
    \item \emph{(Proximity of perturbations)} The distance between $T$ and $\tilde T$ in $\ITM(r)$ goes to $0$ as $\eps \to 0$.
    \item \emph{(Well-defined return map)} There is a well-defined first-return map $R_{\tilde J}$ to $\tilde J$ under $\tilde T$ such that $\tilde J_j$ are exactly the continuity intervals of $R_{\tilde J}$.
    \item \emph{(Proximity of return intervals)} The intervals $\tilde I_j$ are exactly the return intervals $R_{\tilde J}(\tilde J_j)$.
    \item \emph{(Same itinerary)} For each $j$, the return time of $\tilde J_j$ to $\tilde J$ is the same as the return time of $J_j$ to $J$, and the intervals $J_j$, $\tilde J_j$ have the same itineraries through the continuity intervals of $T$, respectively $\tilde T$, until the return time to $J$ or to $\tilde J$, respectively. 
    \item \emph{(Control of critical connections)} 
    The perturbation $\tilde T$ can be further specified so that for every \emph{critical connection} of $T$, i.e.\ a pair of discontinuities $\beta_i$, $\beta_k$ such that $T^n(\beta_i) = \beta_k$ for some $n \ge 1$, such that  
    \begin{enumerate}
        \item either $\beta_i, \beta_k$ are in the orbit under $T$ of some interval $J_j$ until its return to $J$,
        \item or $\beta_i, \beta_k$ are not in the orbit of $J$ under $T$,
    \end{enumerate}
    the perturbed image $\tilde T^n(\tilde \beta_i)$ can be chosen to lie arbitrarily within $\eps$-neighbourhood of the corresponding perturbed $\tilde \beta_k$ (in (a), provided that the itinerary of $\tilde \beta_i$ up to time $ \le n$ is the same as that of $\beta_i$).      
\end{enumerate}

Finally, if $J$ is chosen to be a \emph{maximal periodic interval} in $X$, i.e.\ $J$ is the largest interval without discontinuities in the orbit of its interior for which $R_J$ is the identity, then there exist a perturbation $\tilde T \in \ITM(r)$ and an interval $\tilde J$ in an $\eps$-neighbourhood of $J$ so that the conclusions (1)--(5) hold with $N=1$.
\end{perturbation-lemma}

Besides these two important consequences, we believe that the proof of Theorem \ref{thm:lin-dep} is of independent interest. To the authors' knowledge, there is no analogous setting in the literature where it is proven that a set of vectors related to itineraries of points is linearly independent. Thus, there are no available techniques that can be adapted to the setting of $\ITM$s, so the proof requires developing novel tools. The main idea is to link the linear dependence of vectors to certain partitions of the interval $I$, which are then inductively refined using the dynamics of $T$. By pushing this refinement procedure as far as possible, we obtain the desired form of linear dependence. For a detailed overview of the proof, see Subsection \ref{subsec:lin-dep-strategy}.

The structure of the paper is as follows. In Section \ref{sec:preliminaries}, we cover some basic preliminary material on $\ITM$s. In Section \ref{sec:lin-indep}, we introduce the product notation, dynamically defined vectors, and give the full statement of the main Theorem \ref{thm:lin-dep}, which we then use to prove the two main consequences: Lemma \ref{lem:lin-dep-rj} (Transversality of Itinerary Subspaces) and Lemma \ref{lem:pert-lem} (First Return Map Perturbation Lemma). Finally, in Section \ref{sec:proof-of-lin-dep}, we prove Theorem \ref{thm:lin-dep}.


\section{Preliminaries}
\label{sec:preliminaries}

\subsection{Notation and conventions} 
\label{subsec:not-conv}
In this subsection, we recall some of the main conventions and notation for $\ITM$s. Since the maps we are considering are discontinuous, it will be useful to introduce the notion of \textit{signed points}. Consider the relation $\sim$ defined on all half-open intervals contained in $I$, such that $I_1 \sim I_2$ if and only if the left endpoint of $I_1$ is equal to the left endpoint of $I_2$. For a point $x \in [0,1]$, we define $x^+$ as the equivalence class of all intervals $[x,x+\epsilon)$, for $\epsilon > 0$ sufficiently small so that $[x,x+\epsilon) \in I$. We refer to points of this form as the $+$-type points. Analogously, we define $x^-$ as the equivalence class (under the relation of having the same right endpoint) of all intervals $[x-\epsilon,x)$, where $\epsilon$ is sufficiently small so that $[x-\epsilon,x) \in I$, and we refer to these points as the $-$-type points. We will also say that $x^+$ is the `$+$-part' of $x$ and that $x^-$ is the `$-$-part' of $x$. We use the term `geometric point' to refer to points contained in the geometric interval $I = [0,1)$ and the term `signed point' to refer to the equivalence class $x^+$ or $x^-$ of a point $x \in I$. We will simply use the term `point' if it does not matter what type we are considering.

We adopt the convention that the signed point $x^+$ is immediately to the right of $x$, while $x^-$ is immediately to the left of $x$. With this convention, the definition of intervals contained in $I$ can be extended to signed points in the following way. By $[a,b]$, where $a$ and $b$ can be signed points or geometric points, we will denote the set of all points (signed or geometric) between $a$ and $b$, including $a$ and $b$. For example, the set $[x^-,y^-]$ contains $x^-,x,x^+$ and $y^-$, but not $y$ and $y^+$, and we will write $x^-, x, x^+, y^- \in [x^-,y^-]$ and $y, y^+ \notin [x^-,y^-]$. Analogously, $(a,b)$ will denote the set of all points between $a$ and $b$, excluding $a$ and $b$. We also define the sets $[a,b)$ and $(a,b]$ in the obvious way. With this notation $0^+,1^-$ are elements of $[0,1)$, but $0^-, 1,$ and $1^+$ are not. We will refer to all of these sets as \textit{intervals}. Thus the term `intervals' will refer to these more general sets if we are dealing with signed points, and to subsets of $I$ if we are not. Moreover, we will use the convention that the intervals of $I$ are half-open and of the form $[a,b)$, unless stated otherwise. We will adopt the following useful notation:

\begin{definition}
\label{def:touching}
We say that a pair of signed points $(a,b)$ \textit{touches} (or \textit{is touching}) if the set $\{a,b\}$ is equal to $\{x^+, x^-\}$ for some point $x \in I$. In that case, we will write $a \sim b$.
\end{definition}

The definition of any map $T$ can be extended to signed points in the following way. For a point $x \in I$, let $z_1 := \lim_{y \downarrow x}T^m(y)$ and $z_2:= \lim_{y \uparrow x} T^m(y)$. Since $T$ is discontinuous, $z_1^+ \sim z_2^-$ does not necessarily hold. Thus it makes sense to define:

\begin{align*}
T^m(x^+) &:= z_1^+ \\
T^m(x^-) &:= z_2^-.
\end{align*} 
The definition of the limits of signed points is also straightforward. If $\lim_{i \to \infty} z_i = z$ for some sequence $(z_i) \in I$, then we can define $\lim_{i \to \infty} z_i^{\pm} := z^{\pm}$. Finally, the distance $d(a,b)$ between two signed points $a,b$ is defined as the distance between the geometric points in $I$ corresponding to $a$ and $b$.

We now define the \textit{critical set} $\mathcal{C}$ of $T$ as the following set of signed points:

\[
\mathcal{C} := \{\beta_1^-, \beta_1^+,\dots,\beta_{r-1}^-, \beta_{r-1}^+\}.
\]
We will refer to the elements of $\mathcal{C}$ as either the critical points or the discontinuities, depending on the context. We will sometimes use the labels $\beta_0^+ := 0^+$ and $\beta_r^- := 1^-$, but we do not consider them as critical points. We denote by $\mathcal{C}^+$ the set of all $+$-type points in $\mathcal{C}$, and by $\mathcal{C}^-$ the set of all $-$-\textit{type} points in $\mathcal{C}$. We will also continue to use the term `discontinuity' to refer to geometric points in $I$ at which the map $T \colon I \to I$ is discontinuous. If we want to highlight that we are not considering a signed point, we will use the term `geometric discontinuity'.

If a point $x \in I$ does not land on a discontinuity of $T$ up to some time $n > 0$, then for our purposes there is no difference between iterates $T^n(x^-), T^n(x)$ and $T^n(x^+)$. If a geometric point $x$ does land on a discontinuity of $T$ at some time $n$, then we will not consider iterates of $x$ for any time larger than $n$, but iterates of $x^+$ and $x^-$ instead. In particular, we do not consider iterates of any geometric discontinuity $\beta$, but only of $\beta^+$ and $\beta^-$. Thus, the iterates of some discontinuity are assumed to be signed.

Most of the time, we do not need to know the index of a discontinuity with respect to the order in $I$ nor whether the discontinuity is of $+$-type or $-$-type. That is why we will often use labels $\beta$, $\betas$ and $\betass$ to denote the discontinuities we are dealing with. If we care about the sign of a discontinuity, we will use the labels $\beta^+, \betas^-$, etc. In that case, we will use the same label without a sign to denote the geometric discontinuity in $I$ corresponding to the signed discontinuity, e.g.\ we denote by $\beta$ the geometric discontinuity such that $\beta^+$ is the $+$-part of $\beta$. We will use the notation $\text{ind}(\beta) \in \{1, \dots, r-1\}$ to mean the index of $\beta$ with respect to this order inside $I$.

The perturbation of (or a map sufficiently close to) some starting map $T$ will be denoted by $\Tilde{T}$. Many of the most important objects associated with a map $T$, e.g.\ critical points, the set $X$ and boundary points of intervals contained in $X$, will have well-defined continuations for sufficiently small perturbations of $T$. For any such object $Z$, we will denote its continuation by $\Tilde{Z}$, e.g.\ $\Tilde{\beta}^+$, $\Tilde{X}$, $\Tilde{J}$ and similar.

We associate to each point $x \in I$ its \textit{itinerary}. Here it does not matter if a point is signed or geometric. The itinerary of $x$ is an infinite sequence of integers $(i_0(x), i_1(x), \dots, i_n(x), \dots)$, with $1 \le i_n(x) \le r$, where $i_n(x) = s$ means that $T^n(x) \in I_s$. We will often use `itinerary up to time $n$' to refer to the first $n$ elements of this sequence. 

Finally, for a point $x$ (can be either signed or geometric), we will refer to the set $O(x) := \{ x, T(x), T^2(x), \dots \}$ as the \textit{$T$-orbit} of $x$. Moreover, if $S$ is a subset of $I$, we will refer to the set $O(S) := \bigcup_{x \in S} O(x)$ as the $T$-orbit of $S$. If the map $T$ is clear from the context, we will omit it and simply use the term `orbit'. We will refer to the set $O(x,n) := \{ x, \dots, T^{n-1}{x} \}$, where $n \ge 0$, as the orbit up to time $n$ of $x$, and to the set $O(S,n) := \bigcup_{x \in S} O(x,n)$ as the orbit up to time $n$ of $S$.

\subsection{Basic results and definitions}
\label{subsec:basic-res-def}
In this subsection, we recall the main definitions and prove some elementary facts for $\ITM$s. We start with the following useful definition:

\begin{definition}
\label{def:c1c2}
For $T \in \ITM(r)$, we define the following sets:
\begin{itemize}
    \item $\mathcal{C}_1$ is the set of all $\beta \in \mathcal{C}$ that eventually land on a discontinuity;
    \item $\mathcal{C}_2$ is the set of all $\beta \in \mathcal{C}$ that never land on a discontinuity, but are eventually periodic;
    \item $\mathcal{C}_0 = \mathcal{C}_1 \cup \mathcal{C}_2$;
    \item $\mathcal{C}_i^{\pm} = \mathcal{C}^{\pm} \cap \mathcal{C}_i$, for $i \in \{ 0, 1, 2 \}$.
\end{itemize}
\end{definition}

Periodic discontinuities have a distinguished role in Theorem \ref{thm:lin-dep}. Such discontinuities are a part of entire intervals of periodic points, called maximal periodic interval:

\begin{definition}
\label{def:max-per-int}
A maximal interval $J \subset I$ consisting of periodic points with the same itinerary is called a \textit{maximal periodic interval}. In this way, $J$ is a half-open interval and its boundary points have the property that they land on discontinuities of $T$ or on the boundary points of $I$. Moreover, no point in the interior of $J$ lands on a discontinuity. 
\end{definition}

Since there are $r-1$ discontinuities for a map $T \in \ITM(r)$, there can only be finitely many maximal periodic intervals with pairwise disjoint orbits.

The second type of intervals we are interested in are the connected components of $X$ that are equal to an interval. It will be convenient to use the following shorthand for such intervals:

\begin{definition}
\label{def:comp-int}
We will say that an interval $J$ is an \textit{interval component} of $X$ if $J$ is equal to a connected component of $X$.
\end{definition} 

Note that an interval component of $X$ can be a maximal periodic interval, but that a maximal periodic interval can be strictly contained in an interval component of $X$.

We now prove several elementary results about general $\ITM$s. The following is a fundamental classification of the dynamics of every point in $I$:

\begin{lemma}[Orbit Classification Lemma]
\label{lem:orb-class}
For each point $x \in I$, either signed or geometric, at least one of the following three possibilities holds:
\begin{enumerate}
\item (Precritical) $x$ lands on a discontinuity of $T$;
\item (Preperiodic) $x$ lands on a periodic point of $T$;
\item (Accumulation) $x$ accumulates on a discontinuity of $T$. More precisely, there exist discontinuities $\beta, \betas \in \mathcal{C}$ such that $x$ accumulates on $\beta$ from the left and on $\betas$ from the right.
\end{enumerate}
\end{lemma}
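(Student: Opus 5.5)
Assume $x$ is neither precritical nor preperiodic, and show that case (3) holds. Then every iterate $T^n(x)$ avoids the discontinuities. The orbit $\{T^n(x)\}_{n\ge 0}$ consists of pairwise distinct points: if $T^n(x)=T^m(x)$ with $n<m$, then $T^n(x)$ would be periodic, contradicting the assumption. For signed points the same reasoning applies, with equality of signed points in place of equality of geometric points. So the orbit is an infinite subset of $I$, and the task is to locate where it accumulates.

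The key step is a rigidity argument. For each $\delta>0$, take $n\ge 0$ and consider the largest interval $U_n=(T^n(x)-a_n,\,T^n(x)+b_n)$ on which $T^k$ is a translation for every $k\ge 0$, i.e.\ no point of $U_n$ lands on a discontinuity at any time. Suppose such an interval around some iterate had length bounded below, say $a_n,b_n\ge\delta$. Then all images $T^k(U_n)$ are intervals of length at least $2\delta$, each containing $T^{n+k}(x)$, so two of them must overlap. Since $T^{n+k}(x)\ne T^{n+l}(x)$, overlapping translates of a common interval under the translation $T^{l-k}$ restricted to $T^k(U_n)$ force a rotation-like recurrence. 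I would argue via pigeonhole that there are $k<l$ with $|T^{n+l}(x)-T^{n+k}(x)|<\delta$. Then $T^{l-k}$ acts on $T^k(U_n)$ as the translation by $t=T^{n+l}(x)-T^{n+k}(x)\neq 0$, and it maps a neighbourhood of $T^{n+k}(x)$ into $T^k(U_n)$ again. Iterating this translation pushes points monotonically in one direction inside an interval of bounded length, which eventually exits the interval. But the domain of continuity is preserved, since $T^{l-k}(T^k U_n)=T^l U_n$, so the iterates must eventually hit the boundary of the continuity domain. Formalising this shows that the continuity neighbourhood on the side of $t$ cannot have length bounded below, so $b_n\to 0$ or $a_n\to 0$ along subsequences.

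More concretely, I would show that the one-sided distances $a_n$ (left) and $b_n$ (right) satisfy $\liminf a_n=0$ and $\liminf b_n=0$. Hence for infinitely many $n$ there are times $m$ and discontinuities $\beta$ with $T^m$ continuous on $(T^n(x),T^n(x)+b_n)$ and $T^m(T^n(x)+b_n)$ a discontinuity. Since $T^m$ is a translation there, the point $T^{n+m}(x)$ lies at distance $b_n$ to the left of a discontinuity. There are finitely many discontinuities, so by pigeonhole a single $\betas$ is approached by orbit points from the left at distances tending to $0$, and similarly a single $\beta$ is approached from the right. This gives accumulation on $\betas$ from the left and on $\beta$ from the right, in the paper's signed sense, which is case (3). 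One could also handle boundary effects at $0$ and $1$ using the convention $\beta_0^+,\beta_r^-$, but points landing near $0$ or $1$ are excluded similarly, since they behave like discontinuities of the translation domain.

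The main obstacle is the rigidity step: showing that a neighbourhood of uniform size on one side on which all iterates are continuous contradicts non-preperiodicity. I would first try a Poincaré-recurrence-style argument. Two orbit points within $\delta$ give a translation $T^{l-k}$ by a small $t$ defined on a long interval containing both points. Applying it repeatedly, as long as continuity persists, produces a monotone sequence of orbit points that must leave the interval. Leaving the interval means crossing the edge of the continuity region, which is exactly a point landing on a discontinuity. So the edge distance on that side is at most $|t|<\delta$, and since $\delta$ is arbitrary this gives the claim. One has to run the argument for both signs of $t$, or both sides, to get both left and right accumulation. If only one sign of $t$ occurs, I would use that the pairing of orbit points can be chosen in either order relative to position to obtain the other side.
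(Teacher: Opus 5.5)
Your overall plan matches the paper's: a one-sided interval of fixed length at the orbit, on which no point ever lands on a discontinuity, is incompatible with $x$ being neither precritical nor preperiodic. Finiteness of $\mathcal{C}$ then yields the accumulation. There is, however, a genuine gap in the rigidity step, which you yourself flag as the main obstacle.

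Iterating $T^{l-k}$ from $T^k(U_n)$ does not lead where you claim. Once a point leaves $T^k(U_n)$ you no longer know that $T^{l-k}$ moves it by $t$. Moreover, leaving $T^k(U_n)$ has nothing to do with landing on a discontinuity: the first point outside lies in $T^l(U_n)=T^k(U_n)+t$, and no point of that set ever lands on one.

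The missing ingredient is maximality plus merging. Let $V$ be the maximal interval containing $T^k(x)$ none of whose points ever lands on a discontinuity. Then $T^{l-k}|_V$ is a single translation, and $T^{l-k}(V)$ is again an interval of such points. The union of two overlapping such intervals is again one, so $T^l(x)\in V$ forces $T^{l-k}(V)\subseteq V$. A translation by $t\neq 0$ cannot map a bounded interval into itself, so $t=0$ and $T^k(x)$ is periodic. This is an outright contradiction, with no ``exit'' and no edge-distance estimate. Your bound ``edge distance $\le|t|$'' is true, but proving it requires exactly this merging argument, which already finishes the proof. This is the paper's step: the maximal intervals around $T^m(x)$ and $T^n(x)$ overlap, hence coincide, so $T^{n-m}$ maps that interval into itself by a translation, which must be the identity.

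Your handling of one-sidedness also fails as stated. The time order $k<l$ fixes the sign of $t$, so the pairing cannot be ``chosen in either order relative to position''. A priori all close returns could lie on the same side. In your formulation a pair with $t<0$ only bounds $a_{n+k}$, so the contradiction with $b_n\ge\delta$ is never reached.

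The fix is easy. Negating accumulation from the left gives $\delta>0$ such that no point of $[x,x+\delta)$ ever lands on a discontinuity, and $T^i$ carries this interval by translation onto $[T^i(x),T^i(x)+\delta)$. In particular $b_{n+1}\ge b_n$ always, so your $\liminf$ bookkeeping is unnecessary. Now take $k<l$ with $|T^l(x)-T^k(x)|<\delta$. The interval $[T^i(x),T^i(x)+\delta)$ of whichever of the two points lies further left contains the other. So both points lie in one maximal $V$, and the argument above works for either sign of $t$. Equivalently, as in the paper, apply pigeonhole directly to the images $T^i([x,x+\delta))$, all of length $\delta$.

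A minor point in your final step: the endpoint $T^n(x)+b_n$ need not itself be precritical, since it may only be a limit of precritical points. Use instead the first time at which the image of $[T^n(x),T^n(x)+\epsilon)$ contains a discontinuity in its interior.
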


\begin{proof} Assume that $x \in I$ does not land on a critical point or on a periodic point. We will prove that there is a $\beta \in \mathcal{C}$ such that $x$ accumulates on it from the left, with the case of right-accumulation being analogous. Assume the contrary, that $x$ does not accumulate on any critical points from the left. This implies that the orbit of $x$ stays a distance $\delta > 0$ away to the left of every critical point of $T$. Because of the assumption that $x$ does not land on a periodic point, this implies that the orbit of $x$ also stays at least $\delta$ away from every periodic point of $T$. Indeed, if the point $T^n(x)$, for some $n \ge 0$, is less than $\delta$ away from some periodic point $y$ of minimal period $p$, then the closed interval between $T^n(x)$ and $y$ cannot map forward continuously until time $p$, because this would mean that $T^n(x)$ is periodic. Thus some iterate of this interval contains a discontinuity, which means that the orbit of $x$ gets less than $\delta$ close to $\mathcal{C}$, which is a contradiction.

Thus there exists a maximal open interval $J_0$ of length at least $\delta$ with $x$ in as its right boundary point, such that no point in this interval lands on a critical or a periodic point. Thus the interval $J_0$ maps forward continuously for all time, and every interval $T^i(J_0)$ has the property that none of its points land on critical or periodic points. Let $J_i$ be the maximal interval containing $T^i(x)$ such that no point in it lands on a critical or periodic point, and note that $T^i(J_0) \subset J_i$. Since all of the intervals $J_i$ have length at least $\delta$, there exists $n > m \ge 0$ such that $J_n \cap J_m \neq \emptyset$. By maximality, $J_n = J_m$ and thus $T^{n-m}(J_m) = J_m$. Since $T^{n-m}_{\vert J_m}$ is continuous and a translation, it must be the identity, so $T^{n-m}(T^m(x)) = T^m(x)$, which is a contradiction.
\end{proof} 

Let $A_1'$ (we choose this notation since we will use $A_1$ in the more important result Lemma \ref{lem:x-structure}) be the union of all interval components of $X$. The following is a simple lemma about the restriction of $T$ to $A_1'$.

\begin{lemma}
\label{lem:t-bij-on-A1'}
$T(A_1') \subset A_1'$ and $T_{\vert{A_1'}}$ is a bijection. In particular, the inverse $T^{-1}_{\vert A_1'} : A_1' \to A_1'$ is well defined.
\end{lemma}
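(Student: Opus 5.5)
The plan is to first show $T(X)=X$, then to transport this equality to the union $A_1'$ of interval components. Surjectivity onto $A_1'$ will come from a covering argument, and injectivity from a length count.

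\emph{Step 1: $T(X)=X$.} Since $T(X_n)=X_{n+1}$, we get $T(X)\subset\bigcap_n T(X_n)=\bigcap_n X_{n+1}=X$. Conversely, fix $y\in X$. For each $n$ the set $F_n:=T^{-1}(y)\cap X_n$ is nonempty, because $y\in X_{n+1}=T(X_n)$. It is also finite, with at most $r$ points, one for each branch $x\mapsto x+\gamma_i$. The sets $F_n$ are nested and decreasing, so their intersection is nonempty, and any point in it is a preimage of $y$ lying in $X$.

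\emph{Step 2: $T(A_1')\subset A_1'$.} Let $J=[a,b)$ be an interval component of $X$. The sets $J\cap I_i$ are finitely many half-open intervals, and $T$ translates each of them onto a half-open interval, which lies in $X$ by Step 1. A non-degenerate interval contained in $X$ lies in a single connected component of $X$. That component is a connected subset of $\R$ with more than one point, hence an interval, hence an interval component. So every piece $T(J\cap I_i)$ of positive length lies in $A_1'$, and a piece of zero length is empty.

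\emph{Step 3: surjectivity onto $A_1'$.} Fix $y\in K$, where $K=[a,b)$ is an interval component, and choose $\delta>0$ with $U:=[y,y+\delta)\subset K$. By Step 1, every $t\in U$ has a preimage in $X$, so
\[
U=\bigcup_{i=1}^{r}E_i,\qquad E_i:=\{t\in U: t-\gamma_i\in X\cap I_i\}.
\]
The aim is to find an index $i$ and a right-neighbourhood $[y,y+\delta')$ contained in $E_i$. Then $[y-\gamma_i,\,y-\gamma_i+\delta')$ is a non-degenerate interval in $X$, so $y-\gamma_i\in A_1'$ is the required preimage.

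This is the step I expect to be the main obstacle, because $X$ need not be closed and a category or measure argument on the $E_i$ does not directly produce an interval. I would therefore argue with right-germs at each finite level. Each $X_n$ is a finite union of half-open intervals, and $U\subset X_{n+1}=T(X_n)$, so $T(X_n)$ covers the germ $y^+$. Hence for each $n$ there is a branch $i$ such that $(y-\gamma_i)^+\in X_n\cap I_i$, meaning a right-neighbourhood of $y-\gamma_i$ lies in $X_n$. By pigeonhole over the $r$ branches, together with the nestedness of the $X_n$, a single $i$ works for every $n$. It then remains to show that the lengths of these right-neighbourhoods do not shrink to $0$ as $n\to\infty$. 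For this I would use that $[y,y+\delta)$ is covered at every level: if the germ-neighbourhood on branch $i$ shrank, the points just to its right would have to be covered by other branches. Applying the same pigeonhole argument at those points and using finiteness of the branches should give a contradiction. A cleaner alternative I would also try is to apply the argument to all $t\in U$ at once and extract one branch $i$ with $E_i\supset[y,y+\delta')$.

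\emph{Step 4: injectivity.} Suppose $z_1\neq z_2$ in $A_1'$ with $T(z_1)=T(z_2)=y$. The right-germs $[z_k,z_k+\eta)\subset A_1'$ lie in different continuity intervals $I_i$, since $T$ is injective on each $I_i$. Their images are half-open intervals starting at $y$, so they overlap in an interval of positive length. Decompose $A_1'$ into its countably many pieces $K\cap I_i$, where $K$ ranges over interval components; their lengths sum to at most $1$. Then $\mathrm{Leb}(T(A_1'))<\sum \mathrm{Leb}(K\cap I_i)=\mathrm{Leb}(A_1')$. On the other hand, Step 3 gives $T(A_1')=A_1'$, a contradiction. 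Hence $T|_{A_1'}$ is a bijection and $T^{-1}|_{A_1'}$ is well defined.
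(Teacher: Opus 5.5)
Your Steps 1, 2 and 4 are sound. Step 1 is in fact more careful than the paper, which simply asserts $T(X)=X$. Step 3, however, carries all of the surjectivity, and it is not proved.

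\textbf{The gap in Step 3.} You reduce surjectivity to showing that the right-neighbourhoods of the preimages $y-\gamma_i$ inside $X_n$ do not all shrink to length $0$. Note that what you need is a lower bound for the \emph{best} branch, not for the single branch fixed by pigeonhole. You then only assert that a count over the $r$ branches ``should give a contradiction''. Locally it does not:
\begin{itemize}
\item the pieces of $T(X_n)$ covering $[y,y+\delta)$ end at images of boundary points of $X_n$ and of discontinuities, i.e.\ at points $T^k(\beta^{\pm})$ with $k\le n+1$;
\item the number of such points grows with $n$;
\item so nothing in a count over $r$ branches excludes that every germ at $y$ shrinks while $(y,y+\delta)$ stays covered at each level by other pieces starting at points $a_n\downarrow y$.
\end{itemize}
Your ``cleaner alternative'' fails for the reason you suspected: finitely many $G_\delta$ sets with empty interior can cover an interval, even a.e.\ disjointly.

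\textbf{The paper's device has the same issue.} The paper tries to sidestep the shrinking by choosing $\delta$ so that $U=[y,y+\delta)$ contains no $T(\beta)$ with $\beta\in\mathcal{C}$. Then every branch-preimage of $U$ is a full translate $U-\gamma_i$, and the paper asserts that one of these lies in every $X_{n-1}$. That assertion also needs $U$ to avoid images of boundary points of $X_{n-1}$. For example, take $r=2$, $\beta_1=0.5$, $\gamma_1=0.3$, $\gamma_2=-0.2$. Then $X=X_1=[0.3,0.8)$ and $T(\beta_1^{\pm})\in\{0.3,0.8\}$. Yet $U=[0.55,0.65)$ straddles $0.6=T^2(\beta_1^{\pm})$, and neither $[0.25,0.35)$ nor $[0.75,0.85)$ lies in $X_1$. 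So you located the real difficulty correctly, and closing it requires a global input rather than a local one at $y$.

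\textbf{One way to close it.}
\begin{enumerate}
\item \emph{Injectivity first.} Argue as the paper does: two distinct preimages in $A_1'$ lie in different branches, so they have disjoint right germs of some length $\eta$ in $X\subset X_n$ with the same image. This forces $|X_{n+1}|\le|X_n|-\eta$ for all $n$, which is absurd. This uses no surjectivity, so it should replace your Step 4 and come before Step 3.
\item \emph{A finite invariant core.} Let $\mathcal{F}$ be the forward orbit of the union of the at most $r-1$ interval components containing a discontinuity, together with the finitely many cycles of maximal periodic intervals. Discontinuities cut these components into finitely many pieces. Each piece is translated rigidly until an image contains a discontinuity, and that image then lies in a generating component. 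If this never happens, the piece is eventually periodic: infinitely many equal-length images in $I$ must overlap, and a nonzero translation could then be iterated out of $I$.
\item \emph{$T(\mathcal{F})=\mathcal{F}$.} By the previous point, $\mathcal{F}$ is a finite union of half-open intervals with $T(\mathcal{F})\subset\mathcal{F}$. Injectivity gives $|T(\mathcal{F})|=|\mathcal{F}|$. Since a finite union of half-open intervals of measure zero is empty, $T(\mathcal{F})=\mathcal{F}$.
\item \emph{Nothing lies outside $\mathcal{F}$.} Suppose some interval component is not contained in $\mathcal{F}$. Then it contains a non-degenerate interval $Q$ disjoint from $\mathcal{F}$. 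By injectivity, the images of $Q$ never meet $T(\mathcal{F})=\mathcal{F}$, so they never contain a discontinuity. Hence $Q$ is eventually periodic and enters a maximal periodic interval, which lies in $\mathcal{F}$. This is a contradiction.
\end{enumerate}
Thus $A_1'=\mathcal{F}=T(A_1')$. As a by-product, $A_1'$ has only finitely many components.
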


\begin{proof}
Let $J$ be any component interval contained in $A_1'$. $J$ is partitioned into a finite number (possibly only $1$) of subintervals $J_1, \dots, J_N$ on which $T$ is continuous. Note that each of these subintervals is non-trivial, i.e.\ has positive length. Since $T(X) = X$, the image of $T(J_i)$, for every $1 \le i \le N$, is contained in $X$, and therefore also in a component interval of $X$. Thus $T(J) \subset A_1'$, so the first claim follows.

We first prove that $T_{\vert{A_1'}}$ is surjective. Let $y$ be any point in $A_1'$. Since $y$ is contained in a component interval of $X$, for $\delta > 0$ sufficiently small, there is an interval of points $J = [y,y+\delta)$ contained in the same component interval of $X$ as $y$ which does not contain $T(\beta)$ for every $\beta \in \mathcal{C}$. By construction, for all $n \ge 1$, $J \subset X_n$ and there exists at least one interval $J_{n-1} \subset X_{n-1}$ such that $T(J_{n-1}) = J$, and such that $J_{n-1}$ does not contain a discontinuity of $T$. By letting $n \to \infty$, we see that there exists such an interval $J_{\infty}$ contained in $X$ as well. Since $J_{\infty}$ has length $\delta$ by construction and its left endpoint maps to $y$, we see that $y$ has a $T$-preimage in $A_1'$.  

Finally, assume now that $T_{\vert{A_1'}}$ is not injective, i.e.\ that there is a point $y \in A_1'$ which has two preimages $x_1, x_2 \in A_1'$. Since $x_1, x_2 \in A_1'$, there exists a $\delta > 0$ such that the intervals $[x_1, x_1 +\delta)$ and $[x_2, x_2 +\delta)$ are contained in $A_1'$. Since $X_n$ forms a descending sequence of sets contained in $X$, the sequence of differences in their measures $|X_n \setminus X_{n+1}|$ must form a Cauchy sequence. Thus $|X_n \setminus X_{n+1}| < \delta/2$ for $n$ large enough. But $[x_1, x_1 +\delta)$ and $[x_2, x_2 +\delta)$ are both contained in $X_n$ and have the same image, which means that $|X_n \setminus X_{n+1}|$ is at least $\delta$, and this gives the desired contradiction.
\end{proof}

First return maps to intervals contained in $X$ are one of the central objects in this paper. 
\begin{definition}[First return map]
\label{def:rj}
For an interval $J \subset X$, define $R_J$ to be the \emph{first return map} (or simply the \emph{return map}) to $J$ under $T$, i.e.\ for every $x \in J$ that returns to $J$, we define $R_J(x) := T^k(x)$ for the smallest integer $k = k(x) \ge 1$ such that $T^k(x) \in J$.
\end{definition}
Note that $R_J$, under this definition, may be defined on a strict subset of $J$. However, the following simple lemma shows that this is not the case for intervals in $X$:
\begin{lemma}
\label{lem:rj-facts}
The domain of $R_J$ is the entire interval $J$. $R_J: J \to J$ is bijective and $J$ is partitioned into finitely many maximal half-open subintervals such that no point in their interiors lands on a discontinuity of $T$.
\end{lemma}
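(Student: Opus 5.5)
We argue for $J$ an interval component of $X$, so $J\subset A_1'$, and use Lemma \ref{lem:t-bij-on-A1'}: $T$ maps $A_1'$ into itself bijectively. The proof has three steps: returns, bijectivity, and the finite partition.

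\emph{Step 1 (every point returns).} Since $A_1'$ is a finite union of disjoint interval components, let $J=K_0, K_1, \dots, K_m$ be these components. Each has positive length, and they are finitely many because $X\subset X_n$ and $X_n$ is a finite union of intervals. For $x\in J$, suppose $x$ never returns to $J$. We would like to show that $J$ has positive measure yet its forward images avoid $J$. The cleaner route is to work with $T^{-1}|_{A_1'}$. Since $T|_{A_1'}$ is a bijection, so is every iterate $T^n|_{A_1'}$. Moreover $T|_{A_1'}$ preserves Lebesgue measure: it is a piecewise translation, injective, and maps $A_1'$ onto $A_1'$. Hence $T|_{A_1'}$ is an invertible measure-preserving map of the finite-measure set $A_1'$. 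Poincaré recurrence then gives that almost every point of $J$ returns to $J$, but we need every point.

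To upgrade to every point, we use the local structure. Take $x\in J$ and a small interval $[x,x+\delta)\subset J$ containing no point that lands on a discontinuity of $T$ within some long time. Since the discontinuities and their preimages are countable, we may argue instead with half-open intervals. Consider the intervals $T^n([x,x+\delta'))$ for $\delta'$ so small that $T^n$ acts as a translation on them for $n\le M$. By pigeonhole in $A_1'$ (finite measure, injectivity), two of these images must overlap, say at times $m<n$. Injectivity of $T^m$ on $A_1'$ then gives that $T^{n-m}([x,x+\delta'))$ overlaps $[x,x+\delta')$.

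This alone does not show that $x$ itself returns. So we argue via the inverse instead. For $y\in J$, consider the backward orbit $T^{-k}(y)$, which is well defined in $A_1'$. Each point of $A_1'$ lies in one of finitely many components. If $x$ never returns to $J$, then $T(x), T^2(x), \dots$ lie in $A_1'\setminus J$. Apply Poincaré recurrence to $T^{-1}$ on the set $\bigcup_{n\ge1}T^n(J)\setminus J$: that set has preimages escaping, and the standard argument shows that the set $W$ of points of $J$ that never return satisfies that the $T^n(W)$ are pairwise disjoint with equal measure. Hence $|W|=0$.

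To exclude a single non-returning point, we use half-open right-continuity. The return time $k$ is locally constant from the right except at countably many points, and $T^k$ is a translation on $[x,x+\delta)$. Because a full-measure set of nearby points returns, pick the minimal time $k$ achieved by points of $[x,x+\delta)$ arbitrarily close to $x$. We expect the main difficulty to be this passage from almost every point to every point. Our approach is to approach $x$ from the right, where $T^k$ acts as a translation near $x^+$ and $J$ is half-open, so that $T^k(x)\in \overline{J}$ with the left-closed convention placing it in $J$. The alternative case, landing at the right endpoint of $J$, would be handled by noting that $T^k(x)$ lies in $X$ while the right endpoint of $J$ is not in $J$, which forces continuation to the next component. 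This part needs care.

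\emph{Step 2 (bijectivity).} Injectivity of $R_J$ follows from injectivity of $T|_{A_1'}$. Suppose $R_J(x)=R_J(y)$ with return times $k\le l$. Then $T^{l-k}(y)=x\in J$, and this contradicts minimality of $l$ unless $k=l$, in which case $x=y$. For surjectivity, take $y\in J$ and apply Step 1 to $T^{-1}|_{A_1'}$, which is also a piecewise translation bijection of $A_1'$: $y$ has a first backward return $T^{-k}(y)=x\in J$, and then $R_J(x)=y$.

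\emph{Step 3 (finite partition).} The return time is bounded on $J$. The points where the return map is discontinuous are the points landing on some $\beta\in\mathcal{C}$, or on an endpoint of a component, before returning. By Step 1, for each $x$ some right neighbourhood $[x,x+\delta)$ has constant itinerary up to the return time. By compactness of $\overline J$, using one-sided neighbourhoods at both ends, finitely many such intervals cover $J$. Consequently only finitely many cut points occur, and these give the maximal half-open continuity intervals.
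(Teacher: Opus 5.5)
There is a genuine gap in Step 1, at the passage from ``almost every point returns'' to ``every point returns''. The fix you sketch cannot close it. Write $J=[u,w)$ and suppose $x\in J$ never returns. For each fixed $k$, right-continuity of $T$ gives a maximal $\delta_k>0$ such that $T^k$ is a translation on $[x,x+\delta_k)$. If points $y\downarrow x$ in this interval had first return time $k$, then $T^k(x)+(y-x)\in[u,w)$ for $y$ arbitrarily close to $x$, which forces $T^k(x)\in J$. (In particular, the ``right endpoint'' case you worry about never occurs.) Consequently, for a non-returning $x$ the return times of points $y\downarrow x$ \emph{necessarily} tend to infinity. So the ``minimal time $k$ achieved by points arbitrarily close to $x$'' does not exist in exactly the case you must exclude, and the argument is circular. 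The scenario left open is $\delta_k\to 0$, i.e.\ the orbit of $x^+$ accumulates on discontinuities from the left without ever returning (the accumulation alternative of Lemma \ref{lem:orb-class}). If $\delta_k$ stayed bounded below, your pigeonhole remark would indeed finish. Poincar\'e recurrence cannot detect this, since it only controls sets of positive measure. The claim that return times are right-locally constant off a countable set is also unsupported.

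The missing idea is to use injectivity of $T|_{A_1'}$ combinatorially rather than measure-theoretically. For each discontinuity $\beta$ hit by some point of $J$, let $b_\beta\in J$ be the point with minimal landing time $l$ on $\beta$; it is unique by injectivity. If another $z\in J$ satisfies $T^m(z)=\beta$, then $m>l$ and $T^{m-l}(z)=b_\beta\in J$, so $z$ returns before reaching $\beta$. Hence, on each open interval between consecutive points $b_\beta$, the iterates act by translation until the image first meets $J$. These images have a fixed positive length, so by pigeonhole some image meets $J$ or overlaps an earlier image. An overlap reduces to meeting $J$ at an earlier time by injectivity, and an interval of $X$ meeting the component $J$ lies inside $J$. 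This gives Step 1 and the finite partition of Step 3 simultaneously; it is the paper's argument. Step 2 then goes through as you wrote it.

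Two smaller points. First, your reason for $A_1'$ having finitely many components is invalid: a decreasing intersection of finite unions of intervals can have infinitely many interval components. In the paper this finiteness is deduced in Lemma \ref{lem:x-structure} from the present lemma, although you only need $|A_1'|\le 1$ anyway. Second, in Step 3 a cover of $[u,w)$ by right half-open neighbourhoods $[x,x+\delta_x)$ need not admit a finite subcover. You would also need left neighbourhoods coming from the orbits of the signed points $x^-$, i.e.\ Step 1 for those as well, and the asserted boundedness of return times is unproved at that stage.
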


The return map $R_J$ is continuous on these subintervals of $J$, so for simplicity we refer to these intervals as `continuity intervals of $R_J$'. We usually use $x$ and $y$ to denote the boundary intervals of $J$, so that $J = [x,y)$.

We use the following notation for such interval components $J$. By Lemma \ref{lem:rj-facts}, there are finitely many points $a_{1}, \dots, a_{N-1}$ in the interior of $J$ that land on discontinuities before returning to $J$. It is also convenient to define $a_{0} := x$ and $a_{N} := y$. The continuity intervals of $R_J$ are denoted by $J_1, \dots, J_N$. The return time of $J_j$ to $J$ is denoted by $r_j$, for $1 \le j \le N$, and the landing times of $a_j$ to discontinuities of $T$ are denoted by $l_j$, for $1 \le j \le N-1$.

It is sufficient to prove Lemma \ref{lem:rj-facts} for interval components of $X$. Indeed, this would imply that the return map to each such component is an $\IET$. Since every interval $J$ contained in $X$ must be contained in such an interval component, the result for $J$ follows for the classical result for $\IET$s (see \cite{MR0516048} for the proof). For the purposes of the proof, we use a slightly different notation for $R_J$ than introduced above, as this notation only makes sense after proving Lemma \ref{lem:rj-facts}.

\begin{proof}[Proof of Lemma \ref{lem:rj-facts}]
As discussed, we prove the case when $J$ is an interval component of $X$. Let $\mathcal{C}(J)$ be the set of discontinuities of $T$ for which there exists a point in $J$ that land on it. Let $b_1 < \dots < b_N \subset J$ be the finite set of points in $J$ that have the smallest landing times to discontinuities in $\mathcal{C}(J)$, i.e.\ for every $\beta \in \mathcal{C}(J)$ there exists exactly one $b_i$, with $1 \le i \le N$, such that $b_i$ is the point in $J$ with the smallest $l_i$ such that $T^{l_i}(b_i) = \beta$. Note that the points $b_i$ are well defined, as $T_{\vert A_1}$ is a bijection, by Lemma \ref{lem:t-bij-on-A1'}.

The points $b_i$ induce a partition of $J$ into finitely many subintervals. We may assume that $b_1$ is contained in the interior of $J$, with the case of $b_1$ in the left boundary point being analogous Thus there are exactly $N+1$ such intervals, and we denote them by $J_1, \dots, J_{N+1}$. We claim that the points in each interval $J_i$ of this partition all return to $J$ at the same finite time, and have the same itinerary up to this time.

Let $J_i$ be arbitrary, with $1 \le i \le N+1$. Let $m > 0$ be the smallest time (which clearly exists) such that at least one of the following three things happens:

\begin{enumerate}
    \item $T^m(J_i)$ contains a discontinuity $\beta$ in its interior,
    \item $T^m(J_i) \cap T^n(J_i) \neq \emptyset$ for some $0 < n < m$, or
    \item $T^m(J_i) \cap J \neq \emptyset$.
\end{enumerate}

Assume that (1) happens first, and let $x$ be the point in $J_i$ that lands on $\beta$ at time $m$. Then by definition $\beta \in \mathcal{C}(J)$, and there exists an $b_j$ such that $b_j$ has the smallest landing time $l$ to $\beta$ out of all points in $J$. In particular, $l < m$, since $T^{l}_{\vert A_1'} : A_1' \to A_1'$ is a bijection by Lemma \ref{lem:t-bij-on-A1'}. Since $T^{-1}_{\vert A_1'}$ is well defined and bijective by Lemma \ref{lem:t-bij-on-A1'}, this means that $T^{m-l}(x) = a_j$, so (3) must have happened at a strictly smaller time than (1), contradicting our choice of $m$.

Now assume that (1) does not happen at time $m$, but that (2) does. Thus the iterates $T^j(J_i)$ do not contain a discontinuity of $T$ for all $0 \le j \le m$. This means that the intervals $T^j(J_i)$ have a continuous $T^{-1}_{\vert A_1'}$-images of every $1 \le j \le m$. Thus in particular $T^{n-1}(J_i) \cap T^{m-1}(J_i) \neq \emptyset$, contradicting our choice of $m$.

From this it follows that only (3) happens at time $m$. Thus $J_i$ maps forward continuously up to time $m$, so $T^m(J_i)$ is a single interval. Then if $T^m(J_i) \setminus J \neq \emptyset$, the interval $J$ can be extended to include $T^m(J_i) \setminus J \neq \emptyset$, contradicting the fact that it is a connected component of $X$. Thus $T^m(J_i) \subset J$. This implies that the domain of the first return map $R_J$ to $J$ is the entire interval $J$, and that the $J_i$ subintervals are the continuity intervals for $R_J$. If $R_J$ is not injective, i.e.\ if $R_J(J_i)$ and $R_J(J_k)$ intersect for some $i \neq k$, then similarly as in case (2) above, we can continuously pull-back the $R_J(J_i) \cup R_J(J_k)$ with $T^{-1}$ to get that one of these intervals must have intersected $J$ earlier, contradicting the definition of the first return map.
\end{proof}

Using Lemma \ref{lem:rj-facts}, we may prove the following structure result for the closure $\overline{X}$ of the attractor $X$. Recall that we consider the closure of $X$ in $[0,1]$, not in the relative topology of $I$.

\begin{lemma}
\label{lem:x-structure}
For every map $T \in \ITM(r)$, the closure $\overline{X} \subset [0,1]$ is equal to $A_1 \cup A_2$, where $A_1$ is a finite union of closed intervals and $A_2$ is a Cantor set. The union $A_1 \cup A_2$ is disjoint, except possibly at the right endpoints of the intervals in $A_1$. Moreover, one of the sets $A_1$ and $A_2$ may be empty.
\end{lemma}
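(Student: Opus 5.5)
We prove Lemma \ref{lem:x-structure} by decomposing $\overline{X}$ into its interval part and its remaining part. We let $A_1$ be the union of closures of the interval components of $X$, and show first that there are only finitely many of them.

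\textbf{Finiteness of interval components.} Take an interval component $J=[x,y)$. By Lemma \ref{lem:rj-facts}, $R_J$ is a bijection of $J$ with finitely many continuity intervals. By Lemma \ref{lem:t-bij-on-A1'}, the orbit of $J$ up to the return times covers a finite union of intervals in $A_1'$. The boundary points of $J$ are controlled as follows. If $x$ is not $0$ or an image of a boundary point of $I$, then some point just to the left of $x$ is outside $X$. Pulling back with $T^{-1}|_{A_1'}$, the left endpoint $x$ must be an image of a discontinuity: $x=T^m(\beta^+)$ for some $\beta\in\mathcal C$ (or $x=T^m(0^+)$). Indeed, otherwise a preimage interval $[x'-\delta,x')$ in $X_{n}$ for all $n$ would extend $J$, by the same limiting argument as in the surjectivity proof. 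Analogously, $y$ is an image of some $\beta^-$ or $1^-$.

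Distinct components whose orbits are disjoint thus consume distinct (signed discontinuity, orbit) data. Since $\mathcal C$ is finite and each component's endpoints lie in the forward orbit of one of these finitely many signed points, we get finiteness. In the orbit of $J$ each such image occurs only once before periodic repetition, because $R_J$ is a bijection, so $J$'s orbit consists of finitely many components. Hence $A_1$ is a finite union of closed intervals.

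\textbf{Structure of $A_2$.} Set $A_2:=\overline{X}\setminus \operatorname{int}(A_1)$, taken after removing $A_1$ apart from endpoint effects; more carefully, $A_2:=\overline{X\setminus A_1'}$. It is closed. It has empty interior: any open interval inside $\overline X\setminus A_1$ would, since the $X_n$ are finite unions of half-open intervals, lie in $X$ up to finitely many points. It would then belong to an interval component, a contradiction.

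It has no isolated points. Suppose $z\in A_2$ were isolated. Each $X_n$ is a finite union of intervals containing points of $X$ near $z$. Since $T$ is measure nonincreasing and a translation, the component of $X_n$ around $z$ must shrink to a point. I would then argue that $X$ would contain an isolated point $z$ whose $T$-preimages in $X$ are isolated too. Using surjectivity $T(X)=X$ together with the Orbit Classification Lemma \ref{lem:orb-class}, this forces $z$ to be periodic or precritical. A periodic isolated point is impossible, because it lies in a periodic interval of positive length, which lies in $X$, contradicting $z\notin A_1$. The precritical case reduces to finitely many points and is handled by approximation from the accumulation case.

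Thus $A_2$ is a perfect, nowhere dense compact set, i.e.\ a Cantor set, or it is empty.

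\textbf{Disjointness.} Finally, $A_1\cap A_2$ can only consist of endpoints of intervals of $A_1$. Left endpoints are excluded because each interval component $[x,y)$ has $x\in X$ with a left neighbourhood outside $\overline X$. I would establish this via the pullback argument above showing left endpoints are images of $+$-discontinuities. Right endpoints $y\notin X$ can, however, be accumulated by $A_2$ from the right, which explains the exception in the statement.

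The main obstacle is the perfectness of $A_2$, specifically excluding isolated points of $\overline X$ outside $A_1$. I expect to need the Orbit Classification Lemma there, combined with $T(X)=X$ and the bijectivity of $T|_{A_1'}$, to push an isolated point backwards indefinitely and obtain a contradiction with finiteness of $\mathcal C$.
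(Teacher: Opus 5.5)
Your decomposition ($A_1$ as the closure of the interval components, $A_2=\overline{X\setminus A_1'}$) and the nowhere-density step match the paper. The genuine gap is the perfectness of $A_2$, which is the real content of the lemma: you do not prove it, and the steps you sketch do not hold up. First, it does not follow that the $T$-preimages in $X$ of an isolated point $z$ are isolated. Right-continuity of $T$ only shows that each preimage is isolated from the \emph{right}. A preimage that is a discontinuity $\beta$ (with $z=T(\beta^+)$) may be accumulated by $X$ from the left, and those points are carried near $T(\beta^-)$, not near $z$. Second, nothing in your sketch excludes the accumulation alternative of Lemma~\ref{lem:orb-class}, which moreover gives \emph{pre}periodic rather than periodic, so ``this forces $z$ to be periodic or precritical'' is unsupported. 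Third, precritical points need not be finitely many, and ``handled by approximation from the accumulation case'' is not an argument. In fact, accumulation of the backward orbit on a discontinuity is exactly what has to be exploited, not ruled out.

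The missing idea, which is what the paper uses, is to work along the backward orbit. It suffices to treat $z\in X\setminus A_1'$, since other points of $A_2$ are limits of such points. Such $z$ is not periodic: otherwise $T^p$ is the identity on some $[z,z+\eps)$, which then lies in $A_1'$. Since $T(X)=X$, $z$ therefore has a backward orbit $w_0=z$, $T(w_{k+1})=w_k$, of pairwise distinct points of $X$. The paper takes an accumulation point of these preimages and pushes the accumulation forward to $z$, passing through a discontinuity that is hit infinitely often when continuity breaks.

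One clean way to carry this out is as follows. Let $d_k$ be the distance from $w_k$ to the nearest discontinuity (or $1$) to its right, and let $\eta_k=\min_{1\le i\le k}d_i$. Then $T^k$ translates $[w_k,w_k+\eta_k)$ onto $[z,z+\eta_k)$.
\begin{itemize}
\item If $\inf_i d_i>0$, pick distinct $w_k<w_m$ with $0<w_m-w_k<\inf_i d_i$ arbitrarily small; this gives $T^k(w_m)=z+(w_m-w_k)\in X$.
\item If $\inf_i d_i=0$, take infinitely many record times $i_1<i_2<\cdots$ (where $d_{i_j}<\eta_{i_j-1}$) with the same right-hand discontinuity $\beta$. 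For $m>j$ we then have $w_{i_m}\in(w_{i_j},\beta)$, and $T^{i_j}$ sends it to a point of $X$ in $(z,z+\beta-w_{i_j})$, with $\beta-w_{i_j}\to0$.
\end{itemize}
Either way $z$ is accumulated by $X$. It is therefore accumulated by $X\setminus A_1'$, because $z\notin A_1$: an endpoint of a component that lies in $X$ belongs to that component.

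Two lesser issues. In the finiteness step, ``each such image occurs only once before periodic repetition'' is false; a discontinuity in $X$ can have an infinite orbit, for example when $R_J$ is a minimal $\IET$. Also, your ``limiting argument'' for $x=T^m(\beta^+)$ does not rule out a backward chain of left endpoints running through infinitely many components. The paper's route avoids both problems: $J\subset\bigcup_{\beta\in X\cap\mathcal C}O(J_\beta)$, because every continuity interval of $R_J$ has an endpoint landing on a discontinuity in $X$ before returning, and each $O(J_\beta)$ is a finite union of intervals by Lemma~\ref{lem:rj-facts}. For disjointness, knowing $x=T^m(\beta^+)$ only describes $X$ to the right of $x$. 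It says nothing about points of $X\setminus A_1'$ accumulating on $x$ from the left, so that clause needs its own argument; the paper's proof does not supply one either.
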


Let us remark that this result has been mentioned in several places in the literature (\cite{MR1356616}, \cite{MR2013352}, \cite{MR2308208}), but no explicit proof is available.

\begin{proof}[Proof of Lemma \ref{lem:x-structure}]
Let $A_1^{'}$ be the union of all maximal intervals contained in $X$. We first prove that $A_1^{'}$ always contains at most finitely many intervals. Indeed, we claim that $A_1^{'}$ is equal to the union of orbits of intervals of $X$ that contain discontinuities of $T$. More precisely, $A_1^{'} = \bigcup_{\beta \in X \cap \, \mathcal{C}} O(J_{\beta})$, where $J_{\beta}$ is the interval component of $X$ containing $\beta \in X$. Since the return map $R_J$ to any interval $J$ of $X$ always has finitely many continuity intervals, this proves the claim. Let $J$ be some interval of $X$. By maximality, at least one boundary point of any continuity interval $J'$ of $R_J$ lands on a discontinuity in $X$ before returning to $J$. Thus each such interval $J'$ lands into an interval $J_{\beta}$, for some $\beta \in X$, before returning to $J$. This means that $J'$ is contained in the orbit of $J_{\beta}$, since the return map $R_{J'}$ is well-defined. Thus the entire interval $J$ is in $A_1^{'}$, which proves the claim. We set $A_1 := \overline{A_1^{'}}$.

Let $A_2 := \overline{ \overline{X} \setminus A_1}$, and assume that it is non-empty. Recall the well-known characterisation of a topological Cantor set: it is a compact, metrizable and totally disconnected set with no isolated points. We now show $A_2$ satisfies all of these properties. Compactness follows by definition, while metrizability follows from it being a subset of $[0,1]$. It is totally disconnected because the image of any path between two different points in $A_2$ is an interval in $A_2$, which is impossible as $\overline{X} \setminus A_1$ is nowhere dense. Indeed, if it were dense in some interval, then this entire interval would be contained in $X_n$ for all $n$, as $X_n$ is a finite union of intervals. This gives that this interval is in $X$ as well, so we get a contradiction with the definition of $A_2$.

Finally, we prove that there are no isolated points by showing that for any point $x \in A_2$, there is a sequence in $A_2$ accumulating on $x$. First of all, $x$ is not periodic, as it would then be contained in a maximal periodic interval. As it is contained in $X$, the set of all of its preimages is therefore infinite and contained in $A_2$. Thus its set of preimages has an accumulation point $y$ in $A_2$. If the orbit of $y$ accumulates on $x$, we are done, so assume that this is not the case. This means that there exists a strictly monotone sequence $x_{n_k} \to y$ of $n_k$-preimages, with $n_1 < n_2 \dots$, such that the itinerary of $x_{n_k}$ and $y$ up to time $n_k$ is different, i.e.\ the iterates of the interval $[x_{n_k},y)$ before time $n_k$ contain a discontinuity. Since there are finitely many discontinuities, there is a discontinuity $\beta$ that occurs for infinitely many $k$. Thus the orbit of $y$ accumulates on $\beta$, so $\beta$ is contained in $A_2$ and the orbit of $\beta$ accumulates on $x$. Indeed, since $y^-$ is not periodic (preimages of $x$ would then be contained in a periodic interval), for any finite time, the iterates of $y$ have positive distance between discontinuities. Thus the times at which the interval $[x_{n_k},y)$ lands on $\beta$ are unbounded, so $x$ is not isolated.  
\end{proof}

Finally, Lemma \ref{lem:J-dynamics} gives some elementary dynamical properties for the interval components of $X$, and it will be used implicitly.

\begin{lemma}
\label{lem:J-dynamics}  
Let $T$ be an $\ITM$. Then the following two properties hold: 
\begin{enumerate}[label=(\alph*)]
    \item Every interval component of $X$ is of the form $[T^{k_1}(\beta^+), T^{k_2}(\betas^-)]$, respectively, for some $\beta^+, \betas^- \in X \cap \mathcal{C}$ and $k_1,k_2 \ge 0$;
    \item If $T^{l_1}(\beta)$ is an interior point of an interval component of $X$ for some $\beta \in X \cap \mathcal{C}$ and $l_1 \ge 0$, then there exist $\betas \in X \cap \mathcal{C}$ and $l_2 \ge 0$ such that $T^{l_1}(\beta) \sim T^{l_2}(\betas)$.
\end{enumerate}
\end{lemma}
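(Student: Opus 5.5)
The plan is to use the bijectivity of $T$ on $A_1'$ (Lemma \ref{lem:t-bij-on-A1'}) to pull endpoints of interval components back along the dynamics until they hit a discontinuity.

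For part (a), let $J=[x,y)$ be an interval component of $X$. I focus on the left endpoint $x$; the right endpoint is symmetric, with $-$-type points in place of $+$-type points. Consider the backward orbit $x_0=x$, $x_{n+1}=T^{-1}_{\vert A_1'}(x_n^+)$, understood as signed points. Each $x_n$ is the left endpoint of some interval $[x_n, x_n+\delta_n)\subset A_1'$. Suppose no $x_n^+$ with $n\ge 0$ equals some $\beta^+\in\mathcal{C}$. Then the right-germ $[x_{n+1},x_{n+1}+\delta)$ maps continuously onto $[x_n,x_n+\delta)$ for every $n$.

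I claim that in this case $x_n$ is also approached from the left by points of $X$. The reason is that $x_{n+1}$ lies in the interior of a continuity interval of $T$, so $T$ is a translation on a two-sided neighbourhood of $x_{n+1}$. If that neighbourhood met $X$ on the left of $x_{n+1}$, its image would meet $X$ on the left of $x_n$.

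To obtain the contradiction, I would argue as follows.
\begin{itemize}
\item Since $A_1'$ has finitely many components (Lemma \ref{lem:x-structure}), the left endpoints $x_n$ belong to a finite set. Hence the sequence is eventually periodic, and the period must be exact, because $T$ is bijective on $A_1'$.
\item So $x$ is periodic, with its right-germ mapping continuously along the whole cycle.
\item Take a small two-sided neighbourhood $U$ of $x$ that avoids discontinuities along the cycle. Then $T^p$ acts as the identity on $U$. The points of $U$ to the left of $x$ are therefore periodic, so they belong to $X$.
\item This contradicts the maximality of $J$, since $x$ is its left endpoint.
\end{itemize}
Consequently some $x_{k_1}^+=\beta^+\in\mathcal{C}$, and $x^+=T^{k_1}(\beta^+)$. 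Finally, $\beta\in X$ because $\beta=x_{k_1}\in A_1'\subset X$. The slightly delicate point is keeping track of signed points: I would take the preimages inside $A_1'$ of the $+$-germ, which exist by Lemma \ref{lem:t-bij-on-A1'}, rather than of the geometric point.

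For part (b), let $z=T^{l_1}(\beta)$ be an interior point of a component $J$, say with $\beta=\beta^+$. Then $z^-$ lies in $J$ together with its left-germ. Apply the same backward iteration to $z^-$: take $z_0=z$ and $z_{n+1}$ as the $T^{-1}_{\vert A_1'}$-preimage of the left-germ $z_n^-$.

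This iteration must eventually reach some $\betas^-\in\mathcal{C}$. Otherwise the periodicity argument above applies. It shows that $z^-$ and $z^+$ are simultaneously periodic, with a two-sided neighbourhood on which $T^p$ is the identity. But then $T^{l_1}(\beta^+)$ would lie in the interior of a periodic interval. Pulling back $l_1$ steps within $A_1'$, and using injectivity, would force $\beta^+$ to have continuous left and right behaviour, which is impossible at a discontinuity.

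I need to be careful here. The forward orbit of $\beta^+$ enters the cycle, so pulling back along that cycle recovers the orbit of $\beta^+$. Injectivity of $T$ on $A_1'$ then identifies the left-germ preimage chain with the right-germ chain of $\beta^+$. At the step $\beta$ itself, the left-germ preimage would be the germ $\beta^-$, which is a discontinuity, giving the contradiction.

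Hence $z^-=T^{l_2}(\betas^-)$, and so $T^{l_1}(\beta)\sim T^{l_2}(\betas)$.

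The main obstacle is the periodic-case contradiction, particularly in part (b). I would first try to handle it via the finiteness of endpoint/germ data together with injectivity of $T_{\vert A_1'}$.
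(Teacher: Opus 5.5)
\textbf{Part (a).} This is the paper's argument. You pull the endpoint back inside $A_1'$, use finiteness of component endpoints and bijectivity of $T_{\vert A_1'}$ to get a cycle that avoids discontinuities, and then contradict maximality with a two-sided neighbourhood of periodic points. One wording fix: state the auxiliary claim with containment and in contrapositive form. If $x_{n+1}$ had a left neighbourhood contained in $X$, so would $x_n$; hence every $x_n$ is a left endpoint. Saying ``approached from the left by points of $X$'' is too weak, because Cantor points of $X$ can accumulate on an endpoint.

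\textbf{Part (b) has a gap.} The step ``Otherwise the periodicity argument above applies'' is unsupported. In (a), eventual periodicity came from the $x_n$ ranging over the finite set of endpoints of interval components. The backward iterates $z_n$ of an interior point lie in no such finite set. So you cannot conclude that $z$ is periodic, let alone that $T^p=\mathrm{id}$ on a neighbourhood of $z$.

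The periodicity detour is not needed. Your last paragraph works on its own if you run it as a direct induction on $n\le l_1$ and add one missing case.
\begin{itemize}
\item Suppose $z_n^+=T^{l_1-n}(\beta^+)$ and $z_{n+1}^-\notin\mathcal C$ (otherwise you are done). Then $z_{n+1}$ is not a discontinuity, so $T(z_{n+1}^+)=z_n^+$.
\item \emph{Missing case:} $z_{n+1}$ is the right endpoint of an interval component. Then $z_{n+1}^+\notin A_1'$ and injectivity says nothing. Instead apply part (a) to $z_{n+1}^-$, which gives $z^-=T^{n+1+k_2}(\betas^-)$.
\item Otherwise $z_{n+1}^+\in A_1'$, and injectivity of $T_{\vert A_1'}$ forces $z_{n+1}^+=T^{l_1-n-1}(\beta^+)$. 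This uses that the orbit of $\beta^+$ lies in $A_1'$, which the paper also assumes tacitly.
\item At $n+1=l_1$ this yields $z_{l_1}^-=\beta^-\in\mathcal C$.
\end{itemize}
So the left chain reaches a discontinuity within $l_1$ steps, either directly or through a right endpoint.

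\textbf{Comparison with the paper.} Repaired this way, your (b) is a backward version of the paper's forward proof. The paper replaces $\beta$ by the last discontinuity on its orbit and takes the first time $i$ the orbit enters $\operatorname{int}X$. Then $T^{i-1}(\beta)$ is $\beta$ or a boundary point. Bijectivity of $T_{\vert A_1'}$ shows that the opposite germ of $T^{i}(\beta)$ is the image of a discontinuity or a boundary point, and part (a) turns a boundary point into an iterate of a discontinuity. Finally the touching relation is pushed forward to time $l_1$. Both arguments rest on the same two facts, bijectivity on $A_1'$ and part (a). The paper applies them at a single step; you apply them along the whole backward chain.
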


Recall that $T_{\vert A_1'}$ is a bijection by Lemma \ref{lem:t-bij-on-A1'}, so the inverse $T^{-1}_{\vert A_1'}$ is well defined.

\begin{proof} 
Observe that the $T^{-1}_{\vert A_1'}$-preimage of every boundary point of an interval component of $X$ is either a boundary point of an interval component of $X$ or a discontinuity of $T$. Fix $x$ in the boundary of $X$ and assume that $T^{-n}(x) \notin \mathcal{C}$ for every $n \ge 0$. Since there are finitely many boundary points, this means that $T^{-n}(x)$ is eventually periodic. Because $T_{\vert A_1'}$ is a bijection, this means that $x$ is also periodic. By assumption, it does not land on a discontinuity. Thus there is an interval of points around $x$ that are all periodic with the same period. This contradicts $x$ being a boundary point of $X$, as all periodic points are contained in $X$. Thus $x = T^n(\beta)$ for some discontinuity $\beta \in \mathcal{C}$, which proves (a).

For (b), let $x$ be a point that is either a discontinuity contained in an interval component of $X$ or is a boundary point of an interval component of $X$. If the image of $x$ is contained in the interior of an interval component of $X$, then there is a point $y$ which is either a discontinuity or a boundary point of $X$ such that $T(x) \sim T(y)$. By part (a), $T(y)$ is of the form $T^{l_2}(\betas)$ in both cases. 

Assume that $T^{l_1}(\beta) \in \text{int}(X)$ for some $n>0$, $\beta \in X \cap \mathcal{C}$. We may assume $\beta$ is the last discontinuity in $O(\beta, l_1)$. Thus $T^i(\beta) \notin \mathcal{C}$ for $0 < i < l_1$. We may further assume that $T^i(\beta) \notin \text{int}(X)$ for $0 < i < l_1$. Indeed, if some iterate $T^i(\beta)$, for $0 < i < l_1$, was in $\text{int}(X)$ and satisfied (b), then every iterate $T^j(\beta)$, for $i < j \le l_1$, is in $\text{int}(X)$ and also satisfies (b). This is because $\beta$ does not land on a discontinuity before time $l_1$. This means that $T^{l_1-1}(\beta)$ is in the boundary of an interval component of $X$, so (b) once again follows from the paragraph above. 
\end{proof}

\section{Linear Dependence of Dynamical Vectors}
\label{sec:lin-indep}  

\subsection{Product notation and perturbations}
\label{subsec:prod}

In this subsection, we introduce the product notation, which will be used throughout the paper. It is a very convenient way to represent and control iterates of critical points.

For each  $T$,  $x \in I$, $s=1,\dots,r$ and $n\ge 1$, we define:
\[
k_s(x,n,T) := \# \{ T^i(x) \in I_s; 0\le i < n \}.
\]
Thus $k_s(x,n,T)$ represents the number of entries of $x$ into $I_s$ up to time $n$. This is an analogue of the first term in the continued fraction expansion for circle rotations, see \cite[Section 1.1]{MR1239171}. When the map $T$ is clear from the context, we will usually simply write $k_s(x,n)$.

Let $W(r) := \R{}^{r} \bigoplus \R{}^{r-1}$ be the $(2r-1)$-dimensional real vector space that contains the \textit{coefficient vectors} (introduced below), and let $(\bm{e}_1, \dots, \bm{e}_r)$ and $(\bm{f}_1, \dots, \bm{f}_{r-1})$ be the canonical bases for $\R{}^{r}$ and $\R{}^{r-1}$, respectively. Recall that $\ITM(r)$ is the parameter space of $\ITM$s on $r$ intervals, and that it is a convex polytope contained in $\mathbb{R}^{2r-1}$. We call the elements of this space \textit{parameter vectors}. These vectors also have canonical coordinates coming from the ambient space $\mathbb{R}^{2r-1}$. We will use the shorthand $(\gamma \, \beta)$ for a parameter vector $(\gamma_1 \dots \gamma_r \, \beta_1 \dots \beta_{r-1})$.

Let $\langle \cdot, \cdot \rangle$ be the standard scalar product on $\mathbb{R}^{2r-1}$. Since $W(r) = \R{}^{r} \bigoplus \R{}^{r-1}$ and $\ITM(r)$ is a subset of $\mathbb{R}^{2r-1}$, it makes sense to write $\langle v, (\gamma \, \beta) \rangle =  \sum_{s=1}^r v_s \gamma_s + \sum_{s=1}^{r-1} v_{s+r} \beta_s$ for a coefficient vector $v = \sum_{s=1}^r v_s \bm{e}_s + \sum_{s=1}^{r-1} v_{s+r} \bm{f}_s \in W(r)$ and a parameter vector $(\gamma \, \beta) \in \ITM(r)$. We call $\langle v, (\gamma \, \beta) \rangle$ the \textit{product} of $v$ and $(\gamma \, \beta)$.

The first reason for introducing this product is to obtain a formula for any iterate $T^n(\betas)$ of some discontinuity $\betas$. Define the vector of coefficients in $v(\betas,n,T) \in W(r)$ of this iterate as:

\begin{equation}
\label{vec:v-beta-n}
v(\betas,n,T) \coloneqq \left(\sum_{1 \le s \le r} k_s(\betas, n, T) \, \bm{e}_s, \,  \bm{f}_{\text{ind}(\betas)}\right).
\end{equation}
Then the following holds:
\begin{align*}
T^n(\betas) &= \sum_{s=1}^r k_s(\betas,n,T) \gamma_s + \betas \\
&= \langle \, v(\betas,n,T) , (\gamma \, \beta) \, \rangle,
\end{align*}
where the vector $(\gamma \, \beta)$ corresponds to the defining coefficients of $T$. With this notation, we can associate to any landing of some discontinuity $\betas$ on some other discontinuity $\betass$ a coefficient vector in the following way. If $T^n(\betas) = \betass$, then the landing vector $L$ is defined as:

\[
L \coloneqq \left(\sum_{s = 1}^r \, k_s(\betas, n) \, \bm{e}_s, \, \bm{f}_{\text{ind}(\betas)} - \bm{f}_{\text{ind}(\betass)} \right).
\]
Using the above formula for the iterate $T^n(\betas)$, we get:

\begin{align*}
0 &= T^n(\betas) - \betass \\
&= \sum_{s=1}^r k_s(\betas,n,T) \gamma_s + \betas - \betass \\
&= \langle \, L , (\gamma \, \beta) \, \rangle.
\end{align*}

The second reason why the scalar product notation is useful is that it gives explicit formulas for how the $T$-iterates of points change after a perturbation $\delta$ of $T$. Let $\{ \bm{g_1}, \dots, \bm{g_r}, \bm{h_1}, \dots, \bm{h_{r-1}} \}$ be the canonical basis of the tangent space $\bm{T}_T \ITM(r)$ at point $T = (\gamma \, \beta)$ in $\ITM(r)$. Each perturbation $\Tilde{T}$ of a map $T$ in $\ITM(r)$ is given by a linear functional $\bm{\delta}: \bm{T}_T \ITM(r) \to \R$, so that $\Tilde{\gamma}_s = \gamma_s + \bm{\delta}(\bm{g_s})$ and $\Tilde{\beta}_s = \beta_s + \bm{\delta}(\bm{h_s})$. Thus $\Tilde{T}$ is obtained from $T$ by perturbing in the direction $\delta := (\bm{\delta}(\bm{g}_1) \, \dots \, \bm{\delta}(\bm{g}_r) \, \bm{\delta}(\bm{h_1}) \, \dots \, \bm{\delta}(\bm{h}_{r-1})) \in \bm{T}_T \ITM(r)$. Hence, after identifying $\bm{T}_T \ITM(r)$ and $\mathbb{R}^{2r-1}$, it makes sense to write $(\gamma \, \beta) + \delta = (\Tilde{\gamma}\, \Tilde{\beta})$.

If the itineraries of $\beta$ under $T$ and $\Tilde{\beta}$ under $\Tilde{T}$ are the same up to some finite time $n \ge 1$, then we have the following formula: 

\begin{align*}
\Tilde{T}^n(\Tilde{\beta}) - T^n(\beta) &= \sum_{s = 1}^r k_s(\beta,n) \Tilde{\gamma}_s + \Tilde{\beta} - \sum_{s = 1}^r k_s(\beta,n) \gamma_s - \beta \\
&= \sum_{s = 1}^r k_s(\beta,n) (\Tilde{\gamma}_s - \gamma_s) + \Tilde{\beta} - \beta \\
&= \sum_{s = 1}^r k_s(\beta,n) \bm{\delta}(\bm{g_s}) + \bm{\delta}(\bm{h_{\text{ind}(\beta)}}) \\
&= \langle \, v(\beta,n,T) , \delta \,  \rangle.
\end{align*}
Thus if we want $T^n(\betas) = \betass$ to hold after perturbation as well, the following must hold (assuming the itinerary of $\betas$ up to time $n$ does not change):

\[
\langle \, L , \delta \, \rangle = 0.
\]
These connections between vectors and perturbations are what allow us to use Theorem \ref{thm:lin-dep} to prove the Perturbation Lemma \ref{lem:pert-lem}. See subsection \ref{subsec:two-consequences} for details.

\subsection{Dynamical vectors and statement of Theorem \ref{thm:lin-dep}}

\label{subsec:statement-lin-indep}

The crucial property that allows us to change one part of the dynamics while keeping the other parts as they are is the linear independence of itinerary vectors of discontinuities. This is the content of Theorem \ref{thm:lin-dep} and Corollary \ref{cor:lin-indep}. To state these results precisely, we need some preliminary results and notation. When introducing objects that depend on some interval $J$, the dependence will be explicit in the notation if the object in question will be used in the context where there are several $J$-intervals, and it will be implicit otherwise.

Let $J = [x^{J,+},y^{J,-}]$ be an interval component of $X$. Recall that the return map $R_J$ to $J$ is well-defined (see Definition \ref{def:rj} and the paragraph below) and that there are finitely many points $a_{1}^{J}, \dots, a_{N_J-1}^J$ in the interior of $J$ that land on discontinuities before returning to $J$. Let $J_1^J, \dots, J_{N_J-1}^J$ be the maximal intervals in $J$ such that no point in their interior lands on a discontinuity, and let $r_j^J$ be the return time of $J_j^J$ to $J$. Let $a_{0}^{J,+} := x^+$ and $a_{N_J}^{J,-} := y^-$ be the boundary points of $J$. For each $1 \le j \le N_J-1$, let $m_{j}^{J,+}$ be the number of discontinuities that $a_{j}^{J,+}$ lands on before returning to $J$ and, for $1 \le k \le m_{j}^{J,+}$, let $\beta^{J,+}(j,k)$ be the $k$-th discontinuity along the orbit up to return time to $J$ of $a_j^{J,+}$. Define $m_{j}^{J,-}$ and $\beta^{J,-}(j,k)$ similarly, for $1 \le j \le N_J-1$ and $1 \le k \le m_{j}^{J,-}$. For each $1 \le j \le N_J$, the points $\beta^{J,+}(j,1)$ and $\beta^{J,-}(j,1)$ are the $+$ and $-$ part of a single discontinuity, which we denote by $\beta^J(j)$. Figure \ref{fig:points-in-J-orbit} illustrates these points for an interval $J$ for which $R_J$ has three continuity intervals and $\sigma = (3 2 1)$.

\begin{figure}
    \centering
    \includegraphics[width=\linewidth]{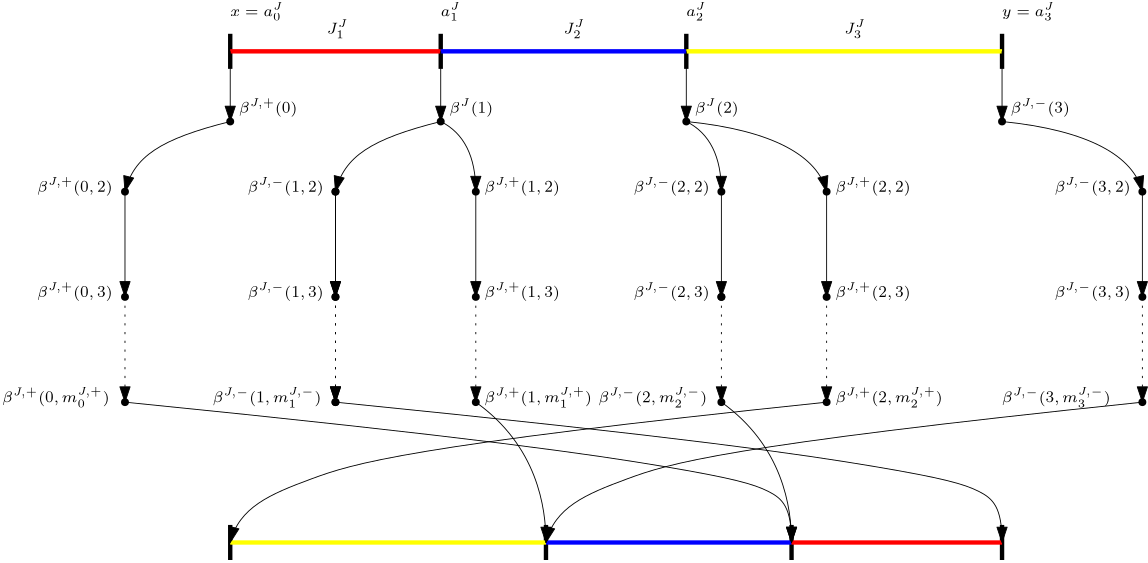}
    \caption{All dynamically important points in the orbit of an interval $J$ for which the return map $R_J$ has three continuity intervals.}
    \label{fig:points-in-J-orbit}
\end{figure}

Recall that $W(r) = \R{}^{r} \oplus \R{}^{r-1}$ is the space of coefficient vectors, and that $(\bm{e}_1, \dots, \bm{e}_r)$ and $(\bm{f}_1, \dots, \bm{f}_{r-1})$ are the canonical bases for $\R{}^{r}$ and $\R{}^{r-1}$, respectively.

We now define three types of dynamically defined vectors associated to the orbit of $J$. The first ones are the \textit{first landing vectors}, that correspond to the first time the points $a_1^J, a_2^J, \dots, a_{N_J-1}^J$ land on discontinuities of $T$:

\begin{definition}[First Landing vectors]
\label{def:lan-vec}
For $1 \le j \le N_J-1$, let $l_{j}^J$ be the landing time of $a_{j}^J$ to $\beta^J(j)$ and let $L_{j}^{J} \in W(r)$ be the associated vector, called the \emph{first landing vector}:

\[
L_{j}^{J} \coloneqq \left(\sum_{s = 1}^r k_s(a_j^J, l_{j}^J) \, \bm{e}_s, \, - \bm{f}_{\text{ind}(\beta^J(j))}\right).
\]
\end{definition}

Recall from subsection \ref{subsec:not-conv} that ind$(\beta)$ is the index of $\beta$ with respect to the order of critical points inside of $I$. If $a_j$ is a discontinuity of $T$, i.e.\ if the landing time $l_j^J$ is zero, then $L_j^{J} = \left( 0, - \bm{f}_{\text{ind}(\beta^J(j))}\right)$ by definition. We call $L_j^{J}$ the first landing vector of $a_j^J$ to $\beta^J(j)$ because the following holds:

\begin{align}
\label{eq:L-vec-property}
\begin{split}
0 &= a_j^J + \sum_{s = 1}^r k_s(a_j^J, l_{j}^J) \gamma_s - \beta^J(j) \\
&= \left(\sum_{s = 1}^r k_s(a_j^J, l_{j}^J) \gamma_s - \beta^J(j)\right) + a_j^J \\
&= \langle L_j^{J}, (\gamma \, \beta) \rangle + a_j^J.
\end{split}
\end{align}
\noindent
The second type of vectors we need to consider are the \textit{critical connection vectors}, which correspond to landings of discontinuities in the orbit of $J$ onto other discontinuities (which are thus also in the orbit of $J$).

\begin{definition}[Critical connection vectors]
\label{def:cc-vec}
For each $1 \le j \le N_J-1$ and $1 \le k < m_{j}^{J,+}$, let $q^{J,+}(j,k)$ be the landing time of $\beta^{J,+}(j,k)$ to $\beta^{J,+}(j,k+1)$ and let $C^{J,+}(j,k) \in W(r)$ be the associated vector, called the \emph{critical connection vector}:

\[
C^{J,+}(j,k) \coloneqq \left(\sum_{s=1}^r k_s(\beta^{J,+}(j,k), q^{J,+}(j,k)) \, \bm{e}_s, \, \bm{f}_{\text{ind}(\beta^{J,+}(j,k))} - \bm{f}_{\text{ind}(\beta^{J,+}(j,k+1))}\right).
\]
\end{definition}

The following holds by construction:
\begin{align}
\label{eq:C-vec-property}
\begin{split}
0 &= \beta^{J,+}(j,k) + \sum_{s=1}^r k_s(\beta^{J,+}(j,k), q^{J,+}(j,k)) \gamma_s - \beta^{J,+}(j,k+1) \\
&= \sum_{s=1}^r k_s(\beta^{J,+}(j,k), q^{J,+}(j,k)) \gamma_s + \beta^{J,+}(j,k) - \beta^{J,+}(j,k+1) \\
&= \langle C^{J,+}(j,k), (\gamma \, \beta) \rangle.
\end{split}
\end{align}
Define $C^{J,-}(j,k) \in W(r)$ analogously for $1 \le j \le N_J-1$ and $1 \le k < m_{j}^{J,-}$.

The third type of dynamical vector we need to consider are the \textit{return vectors}, which correspond to the return of points $a_1^{J,-}, a_1^{J,+}, \dots, a_{N_J-1}^{J,+}, a_{N_J-1}^{J,-}$ to the interval $J$.

\begin{definition}[Return vectors]
\label{def:ret-vec}
For each $1 \le j \le N_J-1$, let $r_{j}^{J,+}$ be the time at which $\beta^+(j,m_{j}^{J,+})$ lands into $J$ and let $R_{j}^{J,+} \in W(r)$ be the associated vector, called the \emph{return vector}:

\[
R_{j}^{J,+} \coloneqq \left(\sum_{s=1}^r k_s(\beta^{J,+}(j,m_{j}^{J,+}), r_{j}^{J,+}) \, \bm{e}_s, \, \bm{f}_{\text{ind}(\beta^{J,+}(j,m_{j}^{J,+}))}\right).
\]
\end{definition}
By definition, the following holds:
\[
\langle R_j^{J,+}, (\gamma \, \beta) \rangle = \sum_{s=1}^r k_s(\beta^{J,+}(j,m_{j}^{J,+}), r_{j}^{J,+}) \gamma_s + \beta^+(j,m_{j}^{J,+}) \in J.
\]
Note that also:
\begin{equation}
\label{eq:R-vec-property}
R_J(a_{j}^+) = \langle R_j^{+}, (\gamma \, \beta) \rangle    
\end{equation}
since $a_{j}$ lands on $\beta^+(j,m_{j}^{+})$. Define $r_{j}^{J,-}$ and $R_{j}^{J,-} \in W(r)$ analogously for $1 \le j \le N_J-1$.

For the boundary points $x^{J,+} = a_0^{J,+}$ and $y^- = a^{J,-}_{N_J}$, the definitions of the corresponding dynamical vectors depend on whether they land on discontinuities of $T$ before returning to $J$ or not. If $a_0^{J,+}$ lands on a discontinuity before returning to $J$, then we may analogously as for other $a_j^{J,+}$, where $1 \le j \le N_J-1$, define the vectors $L_0^J$, $C^{J,+}(0,k)$ and $R^{J,+}_0$. In the case when $a_0^{J,+}$ does not land on a discontinuity of $T$ before returning to $J$, we only define the return vector $R_0^{J,+}$ to $J$:
\begin{equation}
\label{eq:alter-R-vec}
R_0^{J,+} := \left(\sum_{s=1}^r k_s(a_0^{J,+}, r_{0}^{J,+}) \, \bm{e}_s, \, 0 \right),
\end{equation}
where $r_0^+$ is the return time of $a_0^+$ to $J$. Analogously, if $a_{N_J}^{J,-}$ lands on a discontinuity of $T$, we may define the vectors $L_{N_J}^J, C^{J,-}(N_J,k)$ and $R^{J,-}_{N_J}$. If $a_{N_J}^{J,-}$ does not land on a discontinuity of $T$ before returning to $J$, then define $R_{N_J}^{J,-}$ analogously as for $a_0^{J,+}$. 

\begin{figure}
    \centering
    \includegraphics[width=\linewidth]{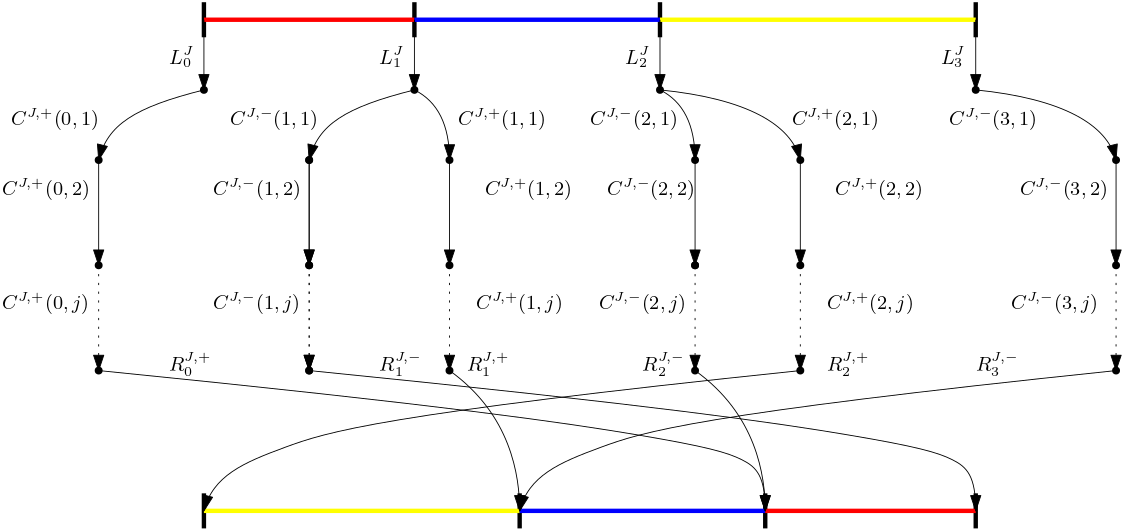}
    \caption{Vectors associated to the orbit of an interval $J$ for which the return map $R_J$ has three continuity intervals.}
    \label{fig:vectors-in-J-orbit}
\end{figure}

We would also like to have control over the dynamics of the discontinuities not contained in $X$. For this purpose, we also introduce the following more general notation for a critical connection vector. Recall from subsection \ref{subsec:basic-res-def} that we say that $\beta \in \mathcal{C}_1$ if $\beta$ eventually lands on a discontinuity. Let $\beta'$ denote the first such discontinuity in the orbit of $\beta$, and let $q(\beta)$ be the landing time of $\beta$ to $\beta'$. Note that $\beta$ and $\beta'$ are signed discontinuities, by the definition of $\mathcal{C}_1$.

\begin{definition}[General critical connection vectors]
Let $\beta$ be a discontinuity of $T$ which eventually lands on another discontinuity. Define the vector $C_{\beta} \in W(r)$ as:

\[
C_{\beta} := \left(\sum_{s=1}^r k_s(\beta,q(\beta))\bm{e}_s, \, \bm{f}_{\text{ind}(\beta)} - \bm{f}_{\text{ind}(\beta')}\right),
\]
\end{definition}
By definition, we have that:

\begin{align*}
0 &= \beta + \sum_{s=1}^r k_s(\beta,q(\beta)) \gamma_s - \beta' \\
&= \langle C_{\beta}, (\gamma \, \beta) \rangle
\end{align*}
Recall also from \ref{subsec:basic-res-def} that $\mathcal{C}_2$ denotes the set of all discontinuities that never land on other discontinuities, but are eventually periodic. The dynamically important vector related to these discontinuities is the landing vector of such a discontinuity into $X$. We do not include these vectors in Theorem \ref{thm:lin-dep}. The reason is that they land into interiors of intervals which we can control with Theorem \ref{thm:lin-dep}. Thus we can already control the dynamics of these discontinuities with this theorem, without the need to include them in the statement.

In the statement of Theorem \ref{thm:lin-dep}, there are two types of intervals contained in $X$. There is a distinguished interval $J_0$, which can be any interval contained in $X$, and there are intervals $J_1, \dots, J_n$ that are maximal periodic intervals.

To ease the notation, we will not explicitly state the ranges for all indices that appear in the statement, as they are the same as in the discussion above. Moreover, we will abbreviate the dependence of vectors and numbers on an interval $J_i$, for $0 \le i \le n$, to the dependence on the number $i$, e.g.\ we will write $C^{i,+}(j,k)$ instead of $C^{J_i,+}(j,k)$.

For the maximal periodic intervals $J_1, \dots, J_n$, the only points that land on discontinuities are the boundary points, so $N_{i}= 1$ for $1 \le i \le n$. For each $J_i$, we will consider the vector $C^{i,+}(0,m_0^{i,+}) := R^{i,+}_0 + L^{i}_0$, instead of the first landing vector $L^{i}_0$ and the return vector $R^{i,+}_0$. The reason for this is that the vector $C^{i,+}(0,m_0^{i,+})$ is equal to the critical connection vector corresponding to the landing of $\beta^{i,+}(0,m_0^{i,+}-1)$ to $\beta^{i,+}(0,1)$. This is the special property of maximal periodic intervals: their return and first landing vectors combine into a critical connection vector, which is not true in general. This will be crucially used in the proof of Proposition \ref{prop:refine}, which is one of the main steps in the proof of Theorem \ref{thm:lin-dep}. We define $C^{i,-}(1,m_1^{i,-})$ analogously. 

Before finally stating Theorem \ref{thm:lin-dep}, let us give an interpretation of its statement. The sums of all vectors associated with the left and right endpoints of a continuity interval of a return map to an interval component $J_0$ of $X$ have to be equal because the itineraries of these points up to the return time to $J_0$ are equal (see Figure \ref{fig:vectors-in-J-orbit}). Thus there is an obvious linear dependence between these vectors: we may take all coefficients of vectors associated with the left endpoint equal to some $\alpha$ and all coefficients of vectors associated with the right endpoint to $-\alpha$. Theorem \ref{thm:lin-dep} states, in particular, that this obvious linear dependence is in fact the only one possible. More importantly, and perhaps surprisingly, Theorem \ref{thm:lin-dep} also states that the same conclusion that there is only one possible linear dependence between these vectors continues to hold if we enlarge this set of vectors by adding \textit{all} of the vectors associated to the return map to $J_0$ (as in Figure \ref{fig:vectors-in-J-orbit}), as well as all vectors associated to maximal periodic intervals of $T$ and discontinuities outside of $X$.

\begin{theorem}[Linear Dependence of Dynamical Vectors]
\label{thm:lin-dep}
Let $T \in \ITM(r)$ be an interval translation map. Let $J_0, J_1, J_2, \dots, J_n$ be the intervals in $X$ that have pairwise disjoint orbits, and let $\mathcal{C}_{\neg X}$ be the set of all discontinuities in $\mathcal{C}_1$ that are not contained in $X$. Assume that $J_0$ is an interval component of $X$ and that $J_1, \dots, J_n$ are maximal periodic intervals. As above, let the following be the vectors related to their dynamics:

\begin{lstv}
\label{eq:vectors}
\begin{itemize}
    \item $L^{0}_j, R^{0,+}_j, R^{0,-}_j, C^{i,+}(j,k), C^{i,-}(j,k)$;
    \item $C_{\beta},\text{ for } \beta \in \mathcal{C}_{\neg X}$.
\end{itemize}
\end{lstv}
Assume that there exist real coefficients

\begin{lsta}
\label{eq:a-coefficients}
\begin{itemize}
    \item $\alpha_{j}^{0}, \alpha_j^{0,+}, \alpha_j^{0,-}, \alpha^{i,+}(j,k), \alpha^{i,-}(j,k)$;
    \item $\alpha_{\beta}, \text{ for } \beta \in \mathcal{C}_{\neg X}$.
\end{itemize}
\end{lsta}
such that the vectors in \hyperref[eq:vectors]{(*)} satisfy:

\begin{align}
\label{eq:lin-dep-sum}
\begin{split}
&\sum_{i=1}^n \left( \sum_{k=1}^{m_0^{i,+}} \alpha^{i,+}(0,k) C^{i,+}(0,k) \right) + \sum_{i=1}^n \left( \sum_{k=1}^{m_1^{i,-}} \alpha^{i,-}(1,k) C^{i,-}(1,k) \right) \\
+&\sum_{j=0}^{N_0-1} \left( \sum_{k=1}^{m_j^{0,+}-1} \alpha^{0,+}(j,k) C^{0,+}(j,k) + \alpha_j^{0,+} R_j^{0,+} \right) \\ 
+&\sum_{j=1}^{N_0} \left( \sum_{k=1}^{m_j^{0,-}-1} \alpha^{0,-}(j,k) C^{0,-}(j,k) + \alpha_j^{0,-} R_j^{0,-} \right) \\
+& \left( \sum_{j=0}^{N_0} \alpha_j^0 L^0_j \right) + \sum_{\beta \in \mathcal{C}_{\neg X}} \alpha_{\beta} {C}_{\beta} = 0.    
\end{split}
\end{align}
Then the following equalities hold:
\begin{align}
\label{eq:lin-dep-equality1}
\begin{split}
&\alpha^{i,+}(0,1) = \dots = \alpha^{i,+}(0,m_0^{i,+}) = -\alpha^{i,-}(1,1) = \dots = - \alpha^{i,-}(1,m_{1}^{i,-}); \\
&\alpha^{0,+}(j,1) = \dots = \alpha^{0,+}(j,m_j^{0,+}-1) = \alpha_j^{0,+} \\
&=-\alpha^{0,-}(j+1,1) = \dots = - \alpha^{0,-}(j+1,m_{j+1}^{0,-}-1) = -\alpha_{j+1}^{0,-},
\end{split}
\end{align}
for all $1 \le i \le n$ and $0 \le j \le N_0-1$, respectively. Moreover:
\begin{align}
\label{eq:lin-dep-equality2}
\begin{split}
& \alpha_0^0 = \alpha^{0,+}(0,1); \\
& \alpha_{N_0}^0 = \alpha^{0,-}(N_0,1); \\
& \alpha_j^0 = \alpha^{0,+}(j,1) + \alpha^{0,-}(j,1); \\
& \alpha_{\beta} = 0,
\end{split}
\end{align}
for all $1 \le j \le N_0-1$ and for all $\beta \in \mathcal{C}_{\neg X}$, respectively.
\end{theorem}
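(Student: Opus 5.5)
We reinterpret a linear relation among the vectors in \hyperref[eq:vectors]{(*)} as a statement about a piecewise constant function on $I$, and then propagate constraints along the dynamics. Every vector $v \in W(r)$ in \hyperref[eq:vectors]{(*)} is the coefficient vector of an orbit segment: its $\bm{e}$-part counts visits of the segment to the intervals $I_s$, and its $\bm{f}$-part records the initial and terminal discontinuity. To the relation \eqref{eq:lin-dep-sum} we therefore associate a weight function $\phi \colon I \to \R$, obtained by placing weight $\alpha$ on every point of the orbit segment corresponding to a vector with coefficient $\alpha$. The $\bm{e}$-components of \eqref{eq:lin-dep-sum} say that for each $s$ the total weight of visits to $I_s$ vanishes. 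The $\bm{f}$-components say that at each discontinuity $\beta_s$ the signed sum of segments starting or ending there vanishes. Here the $+$- and $-$-parts of $\beta_s$ share the same basis vector $\bm{f}_s$, so only their combined balance is constrained.

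The key step is a refinement procedure. We start from the partition $\{I_1,\dots,I_r\}$ of $I$. Vanishing of the $\bm{e}_s$-sum forces the weights inside each $I_s$ to cancel. We refine the partition by pulling back along $T$ (first to $X_1 = T(I)$, then to $X_2$, and so on) and show inductively that the weights must cancel on each element of the refined partition. Since the intervals of $X_n$ not contained in $X$ are eventually visited only by segments of the $C_\beta$ with $\beta \in \mathcal{C}_{\neg X}$, we will argue that the contributions of the segments starting outside $X$ cannot be balanced against anything inside $X$. Combined with the $\bm{f}$-balance at these $\beta$, this gives $\alpha_\beta = 0$: a segment $C_\beta$ with $\beta\notin X$ visits a point outside $X_n$ for large $n$, and we need that no other vector in the list does. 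Here we use Lemma~\ref{lem:t-bij-on-A1'} and Lemma~\ref{lem:x-structure}, which say that points of $A_1'$ have unique preimages in $A_1'$ while points outside $X$ are eventually separated from it.

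After removing the $C_\beta$, all remaining segments lie in the orbits of $J_0,\dots,J_n$, which are pairwise disjoint. The relation therefore splits into one relation per orbit, up to the shared $\bm{f}$-coordinates. Within the orbit of a single interval we use that $T$ is a bijection on $A_1'$: for consecutive segments of the itinerary of $a_j^{\pm}$, namely the landing segment, the critical connections and the return segment, the balance at each intermediate discontinuity $\beta^{0,\pm}(j,k)$ forces equal coefficients along the chain. Cancellation of the weight function on the continuity interval $J_j$, whose two endpoints share their itinerary up to the return time, forces the $+$-chain of $a_{j-1}$ and the $-$-chain of $a_j$ to have opposite coefficients. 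This yields \eqref{eq:lin-dep-equality1}, and balancing the $\bm{f}$-coordinate at $\beta^0(j)$ gives \eqref{eq:lin-dep-equality2}. For a maximal periodic interval $J_i$ the combined vector $C^{i,+}(0,m_0^{i,+})$ closes the chain into a cycle through $\beta^{i,+}(0,1)$, so the same argument applies.

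I expect the main obstacle to be the inductive refinement. A single geometric discontinuity $\beta_s$ may be visited by several chains (from $J_0$, from different $J_i$, from outside $X$), and its $\pm$-parts share one basis vector. Separating these contributions therefore requires showing that the weight function is forced to be constant on ever finer pieces, using the dynamics rather than the coordinates. My first attempt would be an induction on the number of discontinuities in the partition, pushing refinement along $T^{-1}$ restricted to $A_1'$ until every piece lies in a single continuity interval of some $R_{J_i}$.
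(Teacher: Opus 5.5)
Your proposal has a genuine gap at the point where the theorem's content lies. You give no mechanism that produces zero-weight equations on cells finer than the $I_s$ (``$\mathcal{A}$-subordinate'' cells in the paper's terms). The relation only yields $\sum_v\alpha(v)N(I_s,v)=0$ for each $s$, plus the $r-1$ balances of $\bm f$-coordinates. Every further equation has to be manufactured from the dynamics, and ``pulling back along $T$'' does not do this. For an interval $K$ on which $T$ is continuous, the weights of $K$ and $T(K)$ differ by first- and last-point terms of the orbit segments, and by points of $T(K)$ whose preimages lie outside $K$; nothing in your argument controls either.

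Cancellation in fact fails on the pieces you want to use. Take the obvious dependence: coefficient $\alpha\neq0$ on $L^0_{j-1}$, $C^{0,+}(j-1,\cdot)$, $R^{0,+}_{j-1}$, and $-\alpha$ on $L^0_j$, $C^{0,-}(j,\cdot)$, $R^{0,-}_j$. If $a^0_{j-1}$ and $a^0_j$ are not discontinuities, the only weighted point in $J^0_j$ is $a^0_{j-1}$, so the weight of $J^0_j$ is $\alpha\neq0$. Your step ``cancellation on $J_j$ forces opposite coefficients'' therefore cannot be right; compare the remark after Proposition~\ref{prop:push-fwd} on why pull-backs fail.

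Your other steps break for the same reason:
\begin{itemize}
\item The relation does not split orbit by orbit. The $\bm e$-coordinates, not only the $\bm f$-coordinates, are shared by all orbits.
\item The $\bm f$-balance at $\beta^{0,+}(j,k)$ also contains the segments starting or ending at the other half $\beta^-$ of the same geometric discontinuity (and, at $\beta^0(j)$, the vector $L^0_j$). So it gives a four-term relation, not equality along one chain.
\item Even if the segments outside $X$ decouple from those inside $X$, they can still cancel among themselves. To get each $\alpha_\beta=0$ you need $\beta$ to be the only distinguished point in a cell of a zero-weight partition.
\end{itemize}

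The missing idea is the paper's maximal-gap splitting. In a cell of the current zero-weight partition, take neighbouring, non-touching distinguished points $p_1,p_2$ with these properties (Lemma~\ref{lem:dist-pair}):
\begin{itemize}
\item they do not lie in a common iterate $T^k(J^0_j)$ or $T^k(J_i)$;
\item the gap $[p_1,p_2]$ has maximal length;
\item after pulling the pair back if necessary, no pair of their preimages lies in a common cell.
\end{itemize}
Since $T$ is a translation on each cell, maximality forces every cell to map entirely to one side of the midpoint of the gap (Lemma~\ref{lem:l-or-r}). So the weight left of the midpoint is a sum of weights of whole cells, which vanish, plus corrections $\Delta_v\in\{-1,0,1\}$ from first and last points of segments. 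Choosing the side so that $J_0$ lies to the right fixes the corrections for return and first-landing segments. Then $\sum_v\alpha(v)\Delta_v$ is exactly the sum of the $\bm f$-balances over the geometric discontinuities left of the midpoint. Here $\beta^+$ and $\beta^-$ lie on the same side, because the gap contains no discontinuity. This is how the shared $\bm f$-coordinate is handled, which you identify as the obstacle but leave open.

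When no admissible pair remains, each $\beta\in\mathcal C_{\neg X}$ is alone in its cell, giving $\alpha_\beta=0$. Also, the maximal intervals of $X$ without interior discontinuities then have zero weight. The chain equalities and opposite signs come from a second phase that pushes forward, rather than pulls back, partitions of the components $H\neq J_0$ of $X$ (Proposition~\ref{prop:push-fwd}). This phase has three effects:
\begin{itemize}
\item it places break points at all forward images of discontinuities until they enter $J_0$;
\item it yields the separated identity $\alpha(v_\beta)=\alpha(v^{-1}_\beta)$ at each Type-2 discontinuity;
\item it makes $[T^q(a^{0,+}_j),T^q(a^{0,-}_{j+1}))$ a zero-weight interval once both endpoints have landed on discontinuities.
\end{itemize}
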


\begin{figure}
    \centering
    \includegraphics[width=\linewidth]{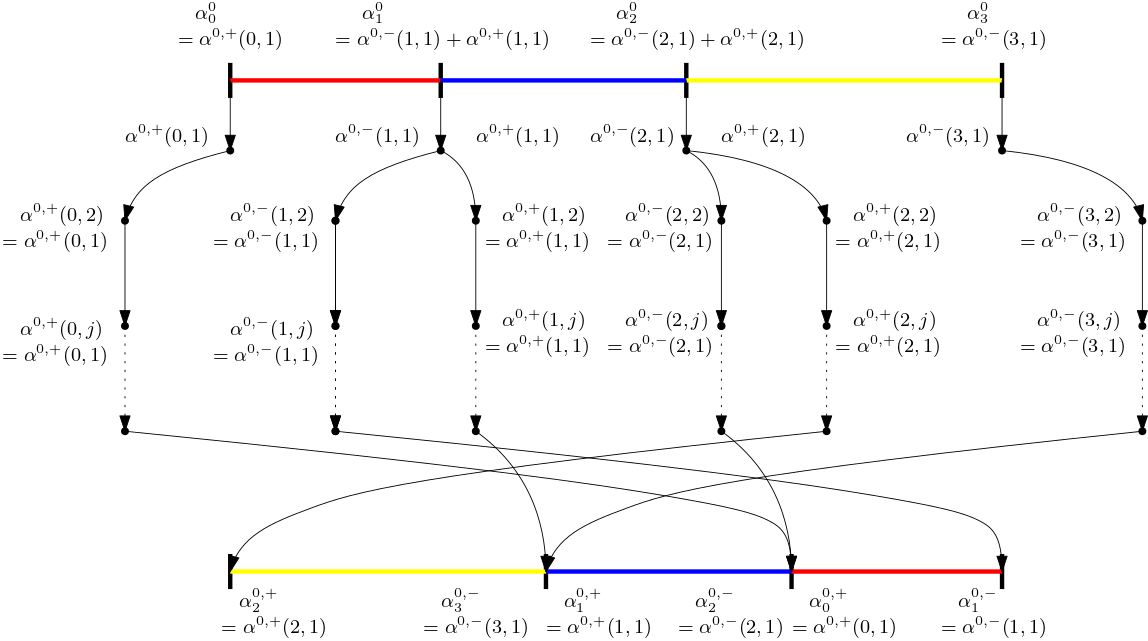}
    \caption{The coefficients for the first landing vectors and critical connection vectors are at the start of their corresponding arrows, while the coefficients for the return vectors are at the end. The indicated equalities between the coefficients come from the conclusion of Theorem \ref{thm:lin-dep}.}
    \label{fig:return-map-vectors}
\end{figure}


The conclusion of Theorem \ref{thm:lin-dep} for $J_0$ (the same interval as in Figures \ref{fig:points-in-J-orbit} and \ref{fig:vectors-in-J-orbit}) with a return map that has three continuity intervals is demonstrated in Figure \ref{fig:return-map-vectors}. The assumption $J_1, \dots, J_n$ are maximal periodic intervals is necessary for our proof to work. A natural question to ask is whether Theorem \ref{thm:lin-dep} holds in a bigger generality:

\begin{question}
\label{q:general-lin-dep}
Does Theorem \ref{thm:lin-dep} hold if we assume that $J_1, \dots, J_n$ are interval components of $X$, and not just maximal periodic intervals? 
\end{question}

\begin{remark}
\label{remark:diff-thm-statement}
Recall from \eqref{eq:alter-R-vec} that in the case when the boundary points of $J_0$ do not land on discontinuities on $T$, there are no first landing and critical connection vectors associated with them, and the definition of the return vector changes. We have chosen to state Theorem \ref{thm:lin-dep} under the assumption that both boundary points of $J$ land on a discontinuity before returning to $J$, as this is the more complicated case since we have more vectors to deal with. Indeed, if $a^{0,+}_0$ does not land on a discontinuity before returning to $J_0$, then we may simply omit the vectors $L^0_0$ and $C^{0,+}(0,k)$ from the statement of Theorem \ref{thm:lin-dep}, with the proof and the conclusion staying the same. Analogously for the case when $a^{0,-}_{N_0}$ does not land on a discontinuity before returning to $J_0$.
\end{remark}

The proof of Theorem \ref{thm:lin-dep} is divided over Subsections \ref{subsec:refining-part}, \ref{subsec:refining-part} and \ref{subsec:proof-of-lin-dep}, while in Subsection \ref{subsec:lin-dep-strategy} we give an overview of the proof. The important consequence of Theorem \ref{thm:lin-dep} is the following corollary:

\begin{corollary}[Linear Independence of Dynamical Vectors]
\label{cor:lin-indep}
Let $T \in \ITM(r)$ be an interval translation map. Let $J_0, J_1, J_2 \dots, J_n$ be intervals contained in $X$ with pairwise disjoint orbits. Assume that $J_0$ is an interval component of $X$ and that $J_1, \dots, J_n$ are maximal periodic intervals. Then the vectors

\begin{itemize}
    \item $L_{j}^0, R^{0,+}_j, C^{0,+}(j,k), C^{0,-}(j,k)$;
    \item $C^{i,+}(0,k)$ for $1 \le k \le m_0^{i,+}$ and $C^{i,-}(1,k)$ for $1 \le k < m_0^{i,-}$;
    \item $C_{\beta}$, for $\beta \in \mathcal{C}_{\neg X}$,
\end{itemize}
form a linearly independent set.
\end{corollary}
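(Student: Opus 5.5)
The corollary will follow from Theorem \ref{thm:lin-dep} by removing, from the full family \hyperref[eq:vectors]{(*)}, exactly the vectors that carry the forced dependencies. Suppose a linear combination of the listed vectors vanishes. Extend it to a combination of the whole family \hyperref[eq:vectors]{(*)}: every vector of \hyperref[eq:vectors]{(*)} that is absent from the list gets coefficient zero. The absent vectors are the negative return vectors $R^{0,-}_j$ and the last periodic critical connection vectors $C^{i,-}(1,m_1^{i,-})$. The extended combination satisfies \eqref{eq:lin-dep-sum}, so the conclusions \eqref{eq:lin-dep-equality1} and \eqref{eq:lin-dep-equality2} hold, and it remains to show that they force every coefficient to vanish.

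For the intervals $J_1,\dots,J_n$ this is immediate. The first line of \eqref{eq:lin-dep-equality1} says that all $\alpha^{i,\pm}(\cdot,k)$ equal a common value up to sign. Since the omitted vector $C^{i,-}(1,m_1^{i,-})$ has coefficient $0$, that common value is $0$. Hence every $\alpha^{i,+}(0,k)$ and every $\alpha^{i,-}(1,k)$ with $k<m_1^{i,-}$ vanishes.

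For $J_0$, fix $0\le j\le N_0-1$. The chain in \eqref{eq:lin-dep-equality1} links all the coefficients $\alpha^{0,+}(j,k)$, $\alpha^{0,+}_j$, $\alpha^{0,-}(j+1,k)$ and $\alpha^{0,-}_{j+1}$, up to sign. Because $R^{0,-}_{j+1}$ was omitted, $\alpha^{0,-}_{j+1}=0$, so the whole chain is zero. Running $j$ over $0,\dots,N_0-1$ kills every $C^{0,\pm}$ and $R^{0,+}$ coefficient.

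Next, \eqref{eq:lin-dep-equality2} expresses each $\alpha^0_j$ as $\alpha^{0,+}(0,1)$, $\alpha^{0,-}(N_0,1)$, or $\alpha^{0,+}(j,1)+\alpha^{0,-}(j,1)$. All of these are now zero, so every landing coefficient vanishes. Finally, $\alpha_\beta=0$ for $\beta\in\mathcal{C}_{\neg X}$ directly.

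The only real obstacle is bookkeeping at the boundary points $a^{0,+}_0$ and $a^{0,-}_{N_0}$, together with degenerate index ranges. One case is $m_j^{0,\pm}=1$, where there are no critical connection vectors on that side. Another is Remark \ref{remark:diff-thm-statement}, where a boundary point does not land on a discontinuity and some vectors are absent. In each case I would check that the chain in \eqref{eq:lin-dep-equality1} still contains the omitted negative return coefficient, so it is still pinned to zero. The landing coefficients $\alpha^0_0$ and $\alpha^0_{N_0}$ are still tied by \eqref{eq:lin-dep-equality2} to a coefficient already shown to vanish, or the corresponding vector is absent.

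A second point to check is the index bound in the corollary. It writes $1\le k<m_0^{i,-}$ rather than $m_1^{i,-}$, which should be read as $m_1^{i,-}$ since $N_i=1$. Taking $k$ up to $m_1^{i,-}-1$, exactly one vector per periodic interval is dropped, matching the one-dimensional dependency there. With that reading, the argument closes.
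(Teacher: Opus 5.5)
Your proposal is correct and follows the paper's proof: extend the vanishing combination by zero coefficients on the omitted vectors $R^{0,-}_j$ and $C^{i,-}(1,m_1^{i,-})$, and apply Theorem~\ref{thm:lin-dep}. Each chain of equalities in its conclusion then contains a zero coefficient, and the remaining equalities give the vanishing of the landing coefficients and of the $\alpha_\beta$ for $\beta\in\mathcal{C}_{\neg X}$. Your reading of the upper index $m_0^{i,-}$ as $m_1^{i,-}$ is the intended one; the paper's own proof carries the same typo.
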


\begin{proof}
For the sake of contradiction, assume that they are linearly dependent with coefficients:
\begin{itemize}
    \item $\alpha_{j}^{0}, \alpha_j^{0,+}, \alpha^{0,+}(j,k), \alpha^{0,-}(j,k)$;
    \item $\alpha^{i,+}(0,k)$ for $1 \le k \le m_0^{i,+}$ and $\alpha^{i,-}(1,k)$ for $1 \le k < m_0^{i,-}$;
    \item $\alpha_{\beta}, \text{ for } \beta \in \mathcal{C}_{\neg X}$.
\end{itemize}
This linear dependence can be extended to include the vectors $R^{0,-}_j$ and $C^{i,-}(1,m_0^{i,-})$ by setting:
\begin{itemize}
    \item $\alpha_{j}^{0,-} = 0$;
    \item $C^{i,-}(1,m_0^{i,-}) = 0$,
\end{itemize}
for all $1 \le j \le N_0$ and $1 \le i \le n$, respectively. Then by equation \eqref{eq:lin-dep-equality1} from Theorem \ref{thm:lin-dep}, we have that:
\begin{align*}
&\alpha^{i,+}(0,1) = \dots = \alpha^{i,+}(0,m_0^{i,+}) = -\alpha^{i,-}(1,1) = \dots = - \alpha^{i,-}(1,m_{1}^{i,-}) = 0; \\
&\alpha^{0,+}(j,1) = \dots = \alpha^{0,+}(j,m_j^{0,+}-1) = \alpha_j^{0,+} \\
&=-\alpha^{0,-}(j+1,1) = \dots = - \alpha^{0,-}(j+1,m_{j+1}^{0,-}-1) = -\alpha_{j+1}^{0,-} = 0,
\end{align*}
for all $1 \le i \le n$ and $0 \le j \le N_0-1$, respectively. Thus by equation \eqref{eq:lin-dep-equality2} from Theorem \ref{thm:lin-dep}, we also have that:
\begin{align*}
& \alpha_0^0 = \alpha^{0,+}(0,1) = 0; \\
& \alpha_{N_0}^0 = \alpha^{0,-}(N_0,1) = 0; \\
& \alpha_j^0 = \alpha^{0,+}(j,1) + \alpha^{0,-}(j,1) = 0 + 0 = 0; \\
& \alpha_{\beta} = 0,
\end{align*}
for all $1 \le j \le N_0-1$ and for all $\beta \in \mathcal{C}_{\neg X}$, respectively. Thus all of the coefficients of the linear dependence are equal to zero, which is what we wanted to prove.
\end{proof}

\begin{remark}
\label{remark:renormalization}
As mentioned in the introduction, currently, there does not exist a general renormalization scheme for $\ITM$s. For $\IET$s, the renormalization matrix for the basic step of the Rauzy--Veech induction is invertible (see \cite{MR2219821}). The same is true for the Zorich acceleration of the Rauzy--Veech induction, as the matrices associated with this acceleration are the products of matrices associated to the basic steps of the Rauzy--Veech induction (\cite{MR2219821}). Thus the column/row vectors of these matrices are linearly independent. In both cases, the renormalization matrix comes from considering the first return map to an appropriately chosen subinterval of $I$. With this in mind, Corollary \ref{cor:lin-indep} and Theorem \ref{thm:lin-dep} can be viewed as more general versions of these facts, as we are again considering subintervals of $I$ and the return maps to these intervals, as well as the vectors associated to these return maps. The difference is that they, as currently defined, do not arise from a renormalization scheme. A potential strategy for finding a renormalization scheme for $\ITM$s would thus be to find a more general version of Theorem \ref{thm:lin-dep} that does not require the return map to a specially chosen interval to be bijective, and then use the corresponding dynamical vectors to construct a matrix. This matrix would then hopefully point to what the appropriate renormalization scheme should be.
\end{remark}

\subsection{Two consequences of Theorem \ref{thm:lin-dep}}
\label{subsec:two-consequences}
In this section, we derive two consequences of Theorem \ref{thm:lin-dep} described in the introduction. The first one, Lemma \ref{lem:lin-dep-rj} is obtained by simply restricting the conclusion of Theorem \ref{thm:lin-dep} only to the vectors related to the orbit of the first interval $J_0$. In the statement, we omit the dependence of the vectors and coefficients on the interval $J_0$, namely the superscript $0$, as there is now only a single interval. The other notation is the same as in the statement of Theorem \ref{thm:lin-dep}.

Recall from the introduction that for each $1 \le j \le N$, $V_j^+$ and $V_j^-$ denote the positive and negative itinerary subspaces of $J_j$, respectively.
Thus for every $1 \le j \le N$, $V_j^+$ is defined as the span of vectors $L_{j-1}$, $C^+(j-1,k)$ and $R_{j-1}^+$ and $V_j^-$ as the span of vectors $L_{j}$, $C^-(j,k)$ and $R_{j}^-$. 

\begin{lemma}[Linear Dependence of Return Map Vectors]
\label{lem:lin-dep-rj}
Let $T \in \ITM(r)$, and let $J$ be an interval component of $X$. Assume that there exist real coefficients $\alpha_{j}, \alpha_j^{+}, \alpha_j^{-}, \alpha^{+}(j,k), \alpha^{-}(j,k)$ such that:

\begin{align}
\label{eq:lin-dep-sum}
\begin{split}
&\sum_{j=0}^{N-1} \left( \sum_{k=1}^{m_j^{+}-1} \alpha^{+}(j,k) C^{+}(j,k) + \alpha_j^{+} R_j^{+} \right) \\ 
+&\sum_{j=1}^{N} \left( \sum_{k=1}^{m_j^{-}-1} \alpha^{-}(j,k) C^{-}(j,k) + \alpha_j^{-} R_j^{-} \right) \\
+& \left( \sum_{j=0}^{N} \alpha_j L_j \right) = 0.    
\end{split}
\end{align}
Then the following equalities hold:
\begin{align}
\label{eq:lin-dep-equality1}
\begin{split}
&\alpha^{+}(j,1) = \dots = \alpha^{+}(j,m_j^{+}-1) = \alpha_j^{+} \\
&=-\alpha^{-}(j+1,1) = \dots = - \alpha^{-}(j+1,m_{j+1}^{-}-1) = -\alpha_{j+1}^{-},
\end{split}
\end{align}
for all $0 \le j < N$. Moreover, $\alpha_j = \alpha^-(j,1) + \alpha^+(j,1)$ for all $1 \le j < N$.

In particular, if we denote by $\Sigma_j$ the vector $(L_{j-1} + \sum_{k=1}^{m_j^{+}-1} C^+(j-1,k) + R^+_{j-1}) = (L_{j} + \sum_{k=1}^{m_j^{-}-1} C^-(j,k) + R^-_{j})$, then $V_j^+$ and $V_j^-$ intersect only along the line $\Sigma_j \mathbb{R} $, while $V^{\pm}_j$ and $V^{\pm}_k$ for $j \neq k$ intersect only at $\{0\}$ for all choices of signs.
\end{lemma}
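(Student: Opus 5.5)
This lemma is the special case $n=0$, $\mathcal{C}_{\neg X}$-coefficients zero, of Theorem \ref{thm:lin-dep}. Given a relation among the vectors attached to $J=J_0$, we extend it to a relation of the form \eqref{eq:lin-dep-sum} in the theorem. We take no maximal periodic intervals, i.e.\ $n=0$, and set $\alpha_\beta=0$ for all $\beta\in\mathcal{C}_{\neg X}$. The theorem then yields \eqref{eq:lin-dep-equality1} verbatim, and the first part of \eqref{eq:lin-dep-equality2} gives $\alpha_j=\alpha^+(j,1)+\alpha^-(j,1)$ for $1\le j<N$. If the boundary points of $J$ do not land on discontinuities before returning, we use Remark \ref{remark:diff-thm-statement} and simply omit the corresponding $L$ and $C$ vectors. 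So the first part needs nothing beyond bookkeeping of indices.

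For the geometric statement we first verify the identity defining $\Sigma_j$. The points $a_{j-1}^+$ and $a_j^-$ are the two endpoints of $J_j$, so they have the same itinerary up to the return time $r_j$. Hence $\sum_s k_s(a_{j-1}^+,r_j)\bm e_s=\sum_s k_s(a_j^-,r_j)\bm e_s$. Counting times additively along the orbit, the $\bm e$-parts of $L_{j-1}+\sum_k C^+(j-1,k)+R^+_{j-1}$ sum to this common count. The $\bm f$-parts telescope: $-\bm f_{\beta(j-1)}$, then the differences along the chain, then $+\bm f$ of the last discontinuity, giving $0$. The same holds on the minus side. Thus the two sums agree, and in particular $\Sigma_j\in V_j^+\cap V_j^-$.

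Next we show $V_j^+\cap V_j^-=\Sigma_j\mathbb R$. Take $w\in V_j^+\cap V_j^-$, and write $w$ as a combination of the generators of $V_j^+$ and minus a combination of those of $V_j^-$. This is a relation of the form in the lemma, in which every coefficient attached to other continuity intervals is zero. The lemma forces all coefficients of the $V_j^+$ generators to equal a common $\alpha$. For the vector $L_{j-1}$, which is shared by $V_{j-1}^-$ and $V_j^+$, the relation $\alpha_{j-1}=\alpha^+(j-1,1)+\alpha^-(j-1,1)$ is needed with $\alpha^-(j-1,1)=0$, which again gives $\alpha$. Hence $w=\alpha\Sigma_j$.

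For transversality with $k\neq j$ we argue the same way. A vector in $V^{\pm}_j\cap V^{\pm}_k$ gives a relation in which the two chains involved have different adjacent partners. The equalities \eqref{eq:lin-dep-equality1} tie the coefficients of the $+$-chain at $a_{j-1}$ to those of the $-$-chain at $a_j$. Since the partner chain has coefficient zero, all coefficients vanish, and so does the vector. The only subtle case is when the two subspaces share a landing vector $L_j$, which happens for $V_j^-$ and $V_{j+1}^+$. There the identity $\alpha_j=\alpha^+(j,1)+\alpha^-(j,1)$ still forces everything to zero, because the remaining chain coefficients are already zero. I expect the main care to go into this index bookkeeping, in particular the shared $L$ vectors and the boundary cases $j=1$ and $j=N$, where $\alpha_0=\alpha^+(0,1)$ and $\alpha_N=\alpha^-(N,1)$. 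The substantive content is entirely in Theorem \ref{thm:lin-dep}.
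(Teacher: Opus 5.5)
Your reduction of the coefficient identities to Theorem \ref{thm:lin-dep}, taking $n=0$ and $\alpha_\beta=0$, is exactly the paper's one-line argument; the paper gives nothing more than this restriction and a \qed. Your verification that the two expressions for $\Sigma_j$ agree is correct, and so is your proof of $V_j^+\cap V_j^-=\mathbb{R}\Sigma_j$. For $j=1$ and $j=N$ it relies on $\alpha_0=\alpha^+(0,1)$ and $\alpha_N=\alpha^-(N,1)$, which come from \eqref{eq:lin-dep-equality2} of the theorem, not from the lemma as displayed. Your partner-chain argument also correctly gives $V^\pm_j\cap V^\pm_k=\{0\}$ for every pair with $j\neq k$ whose spanning sets share no generator.

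The step that fails is your ``subtle case'' $V_j^-$ versus $V_{j+1}^+$ for $1\le j\le N-1$. Both spaces have $L_j$ as a generator, as you yourself note for $L_{j-1}\in V_{j-1}^-\cap V_j^+$. Take $w\in V_j^-\cap V_{j+1}^+$ and write
\[
w=c\,L_j+(\text{minus chain at } a_j)=d\,L_j+(\text{plus chain at } a_j).
\]
Subtracting gives a relation in which $L_j$ has the single coefficient $c-d$, not two separate coefficients. The theorem kills the minus chain at $a_j$, because its partner $C^+(j-1,\cdot),R^+_{j-1}$ is absent. It kills the plus chain at $a_j$, because its partner $C^-(j+1,\cdot),R^-_{j+1}$ is absent. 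It then gives $c-d=\alpha^+(j,1)+\alpha^-(j,1)=0$. This forces only $c=d$, not $c=d=0$, so $w=c\,L_j$ with $c$ arbitrary. Since the $\bm f$-component of $L_j$ is $-\bm f_{\text{ind}(\beta(j))}\neq 0$, you get $V_j^-\cap V_{j+1}^+=\mathbb{R}L_j\neq\{0\}$.

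So no argument can establish the last clause of the statement for these adjacent pairs: as literally written, it is false there. What your method actually proves is the following. First, $V_j^+\cap V_j^-=\mathbb{R}\Sigma_j$. Second, $V_j^-\cap V_{j+1}^+=\mathbb{R}L_j$ for $1\le j\le N-1$. Third, $V^\pm_j\cap V^\pm_k=\{0\}$ for all remaining pairs with $j\neq k$. Alternatively, full transversality holds if the shared first landing vector is removed from one of the two spans.
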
 \qed

In the case when the boundary point $a_0^+$ of $J$ does not land on a discontinuity of $T$, we need to remove $L_0$ and $C^+(0,k)$ from \eqref{eq:lin-dep-sum} and all of the coefficients for $j=0$ from the first line of \eqref{eq:lin-dep-equality1} except $\alpha_0^+$. A similar procedure should be done when $a_N^-$ does not land on a discontinuity.

The second result is a dynamical formulation of Corollary \ref{cor:lin-indep}: the Perturbation Lemma \ref{lem:pert-lem}. This is a very useful statement, because it allows us to make explicit perturbations without having to refer to the heavy vector and product notation. To state it, we need the following useful definition:

\begin{definition}
\label{def:trans-fac}
Let $n > 0$ be a natural number. We define the \textit{translation factor} $Tr(x,n)$ at time $n$ of a point $x \in I$ as the translation factor of $T^n$ restricted to $x$, i.e.\ $Tr(x,n) = T^n(x)-x$.
\end{definition}

This definition can easily be extended to intervals $K \subset I$ of points that have the same itinerary up to time $n > 0$, by calling the translation factor $Tr(K,n)$ at time $n$ of $K$ the translation factor of any point $x$ in $K$. Note that $Tr(K,n) = 0$ if and only if $K$ is periodic with period $n$. We denote the translation factors defined for a perturbation $\Tilde{T}$ of $T$ by $\Tilde{T}r$.

The Perturbation Lemma \ref{lem:pert-lem} has two parts, which can be roughly described as follows. The first states that all sufficiently small dynamical changes (of a certain type) of a return map $R_J$ to an interval component $J$ of $X$ are achievable by perturbations of the underlying map $T$ that depend continuously on the size of the dynamical changes. The second states that the perturbation can also be chosen so that we control two types of critical connections: those in the orbit of $J$ up to return time and those outside the orbit of $J$.

\begin{lemma}[Perturbation lemma]
\label{lem:pert-lem}
Let $T$ be an interval translation map such that $T(I)$ is compactly contained in the interior of $I$. Let $J$ be one of the following: an interval component of $X$ or a maximal periodic interval. Then there exists an $\epsilon_0 > 0$ depending on $J$ and $T$, with the following properties. For every $\epsilon < \epsilon_0$ and every choice of $\epsilon^{\gamma}_1, \dots \epsilon^{\gamma}_{N}, \epsilon^{\beta}_0, \dots \epsilon^{\beta}_{N} \in (-\epsilon, \epsilon)$ there exists a perturbation $\Tilde{T}$ such that $|\Tilde{T} - T| \to 0$ as $\epsilon \to 0$, with $|\cdot|$ being the distance in $\ITM(r)$, and the following holds:

\begin{enumerate}
    \item There exists an interval $\Tilde{J} \subset I$ that is $\epsilon$-close to $J$ and partitioned into intervals $\Tilde{J}_j = [\Tilde{a}_{j-1},\Tilde{a}_j)$, with $1 \le j \le N$, such that $\Tilde{J}_j$ maps forward continuously up to time $r_j$ under the iterates of $\Tilde{T}$ and has the same itinerary up to time $r_j$ as $J_j$ for all $1 \le j \le N$. In the case when $J$ is an interval component of $X$, we may set $\Tilde{a_j} - a_j = \epsilon^{\beta}_j$ for all $0 \le j \le N$;
    \item The difference between the translation factors of $\Tilde{J}_j$ and $J_j$ is $\epsilon^{\gamma}_j$ for all $1 \le j \le N$, i.e.\ $\Tilde{T}r(\Tilde{J}_j,r_j) - Tr(J_j,r_j) = \epsilon^{\gamma}_j$.
\end{enumerate}
Moreover, we have that:

\begin{enumerate}[label=(\alph*)]
    \item Let $T^n(\beta^+) = \betas^+$ be a critical connection such that $\beta^+$ and $T^n(\beta^+)$ are both contained in the $T$-orbit of $J_j$ up to time $r_j$ for some $1 \le j \le N$. We may assume that $\Tilde{\beta}^+$ is still contained in the $\Tilde{T}$-orbit of $\Tilde{J}_j$ up to time $r_j$ and the difference $\Tilde{T}^n(\Tilde{\beta}^+) - \Tilde{\beta}_*^+$ can be chosen arbitrary in $[0,\epsilon)$. Analogously for critical connections $T^n(\beta^-) = \betas^-$ and the difference $ \Tilde{\beta}_*^- - \Tilde{T}^n(\Tilde{\beta}^-)$;
    \item For every critical connection $T^n(\beta) = \betas$, with $\beta,\betas \notin O(J)$, such that either $\beta \notin X$, or $\beta, \betas$ are part of a single periodic orbit, the difference $\tilde T^n(\tilde \beta) - \tilde \betas$ can be chosen arbitrary in $(-\epsilon,\epsilon)$.
\end{enumerate}
\end{lemma}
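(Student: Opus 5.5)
The plan is to translate every requirement on $\tilde T$ into a linear equation in the perturbation direction $\delta$, and then solve the resulting system using Corollary~\ref{cor:lin-indep}. By Subsection~\ref{subsec:prod}, as long as the relevant itineraries are unchanged, the displacement of any iterate of a discontinuity is $\tilde T^n(\tilde\beta)-T^n(\beta)=\langle v(\beta,n,T),\delta\rangle$. Hence the change of a landing relation, a critical connection, or a return position is exactly $\langle L,\delta\rangle$, $\langle C,\delta\rangle$ or $\langle R,\delta\rangle$ for the corresponding dynamical vector.

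\textbf{Setting up the linear system.} First consider the case where $J=J_0$ is an interval component of $X$. Rewrite the targets of (1), (2), (a), (b) as prescribed values of these products, and impose:
\begin{itemize}
\item $\langle L_j,\delta\rangle=-\epsilon^\beta_j$, so that the perturbed point $\tilde a_j=a_j+\epsilon^\beta_j$ still lands on $\tilde\beta(j)$ at time $l_j$, by \eqref{eq:L-vec-property};
\item $\langle C^{\pm}(j,k),\delta\rangle$ equal to the prescribed small gaps from (a), with value $0$ by default;
\item $\langle R_j^{+},\delta\rangle$ chosen so that $R_{\tilde J}(\tilde a_j^+)$ moves by $\epsilon^\beta_j+\epsilon^\gamma_{j+1}$. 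This is the sum of the displacements along the chain landing $\to$ connections $\to$ return, which gives the prescribed translation factor of $\tilde J_{j+1}$ in (2);
\item $\langle C_\beta,\delta\rangle$ prescribed in $(-\epsilon,\epsilon)$ for $\beta\in\mathcal C_{\neg X}$, as in (b);
\item for critical connections inside a single periodic orbit, $\langle C^{i,\pm}(\cdot,k),\delta\rangle$ prescribed, where we take $J_1,\dots,J_n$ to be the maximal periodic intervals carrying those orbits.
\end{itemize}
We deliberately omit the equations for $R_j^{-}$. By the identity $\Sigma_j=L_j+\sum_k C^-(j,k)+R^-_j$ from Lemma~\ref{lem:lin-dep-rj}, the value of $\langle R_j^-,\delta\rangle$ is then determined. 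We check that it is consistent with the common translation factor of $\tilde J_{j+1}$, since both endpoints of $\tilde J_{j+1}$ must carry the same factor $Tr+\epsilon^\gamma_{j+1}$. This is exactly why only one of $R_j^\pm$ appears in Corollary~\ref{cor:lin-indep}.

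\textbf{Solving and estimating.} The vectors listed in Corollary~\ref{cor:lin-indep} are linearly independent. So the linear map $\delta\mapsto(\langle v,\delta\rangle)_v$ is surjective onto the corresponding coordinate space, and it admits a right inverse of bounded norm $K$ depending only on $T$ and $J$. Choosing $\delta$ as this right-inverse image of the prescribed data gives $|\delta|\le K'\epsilon$, which proves proximity in $\ITM(r)$.

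\textbf{Itineraries.} It remains to verify that, for $\epsilon_0$ small, all itineraries used in the product formula are preserved. Points in the interiors of the $J_j$ stay a definite distance from discontinuities up to time $r_j$. The assumption $T(I)\Subset\operatorname{int} I$ keeps $\tilde T$ in $\ITM(r)$ and keeps boundary effects away. For the signed discontinuities in (a), the one-sided choice of sign ($[0,\epsilon)$ for $+$-type, reversed for $-$-type) guarantees that $\tilde T^n(\tilde\beta^+)$ falls on the correct side of $\tilde\beta_*$, so the subsequent itinerary is unchanged. With itineraries fixed, $\tilde J_j$ maps continuously up to time $r_j$ and returns into $\tilde J$. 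The return intervals tile $\tilde J$ because the prescribed return positions are perturbations of a bijection with matching lengths.

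\textbf{Maximal periodic intervals.} When $J$ is a maximal periodic interval we have $N=1$. We use the vectors $C^{i,\pm}$ instead, treating $J$ as one of the $J_i$ and choosing some interval component as $J_0$ if needed. We move the translation factor via $\langle C^{i,+}(0,m),\delta\rangle=\langle R+L,\delta\rangle$. A periodic continuation $\tilde J$ exists exactly when the factor stays zero, and otherwise $\tilde J$ is defined as the corresponding continued interval.

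\textbf{Main obstacle.} I expect the hardest part to be the itinerary bookkeeping: checking that the linear predictions are actually realised, especially for critical connections in the orbit of $J$ that are broken by the perturbation, and checking consistency of the undetermined $R_j^-$ equations.
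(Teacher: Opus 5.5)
Your proposal is correct and follows the paper's proof essentially step for step: encode (1), (2), (a), (b) as prescribed values of $\langle v,\delta\rangle$ on the independent family of Corollary~\ref{cor:lin-indep}, solve with a bounded right inverse, preserve itineraries by shrinking $\epsilon_0$ and using one-sided connection gaps, and handle maximal periodic intervals through the $C^{i,\pm}$ vectors. Your sign $\langle L_j,\delta\rangle=-\epsilon^\beta_j$ (consistent with \eqref{eq:L-vec-property}) and your choice of $\langle R_j^+,\delta\rangle$ via the actual return displacement (which absorbs nonzero connection gaps) are, if anything, more careful than the paper's bookkeeping. Two fixes are needed. First, drop the unneeded claim that the return intervals tile $\tilde J$: it fails for generic $\epsilon^\gamma_j$ (cf.\ the remark after the lemma). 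Second, replace the assertion that interior points of $J_j$ stay a definite distance from $\mathcal{C}$, which is false near $a_{j-1}$ and $a_j$. Use instead the paper's positive lower bound $\Delta$ on the distance from $\mathcal{C}$ of the finitely many relevant iterates of discontinuities that do not hit $\mathcal{C}$ exactly.
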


In (a), the changes are such that the itineraries of the critical points are preserved, while this is not necessarily the case in part (b). The assumption that $T(I)$ is compactly contained in the interior of $I$ holds for every $\ITM$ in a complement of finitely many hyperplanes, so this may be assumed without loss of generality, and it allows us not to include several necessary assumptions on the $\epsilon, \epsilon^{\gamma}_1, \dots \epsilon^{\gamma}_{N}, \epsilon^{\beta}_0, \dots \epsilon^{\beta}_{N}$ that guarantee that the perturbed map still has image contained in $[0,1)$. 

\begin{remark}
    Note that the statement of Lemma~\ref{lem:pert-lem} is slightly more general than the one stated in the introduction. In particular, the intervals $\tilde J_j$ need not return to $\tilde J$ at time $r_j$, i.e.\ the set $\tilde T^{r_j}(\tilde J_j) \setminus \tilde J$ might be non-empty. For sufficiently small perturbations, this can only happen for $j = \tau(1)$ or $j = \tau(N)$.   
\end{remark}

\begin{proof}
Assume first that $J$ is an interval component of $X$. In the setup of Corollary \ref{cor:lin-indep} we set $J_0 := J$, and $J_1, \dots, J_n$ equal to maximal periodic intervals with pairwise disjoint orbits that contain all of the periodic discontinuities of $X$ not contained in the orbit of $J$. Assume first that the boundary points $a_0^{0,+}$ and $a_{N_0}^{0,-}$ land on discontinuities, so that the corresponding first landing vectors are defined. 

We now explain how to choose $\epsilon_0$. Let $M$ be the maximum among the following times: the return times of the intervals $J^{0,j}$ to $J_0$, the minimal periods of the intervals $J_1, \dots, J_n$, and the landing times of points in $\mathcal{C}_{\neg X}$ to $X$. Let $v(\mathcal{C},T)$ be the union of the set of dynamical vectors corresponding to the iterates of all discontinuities of $T$ up to time $M$. More precisely, for every $\beta \in \mathcal{C}$ and every iterate $T^n{\beta}$ with $0 \le n \le M$, the vector $v(\beta,n,T)$ (defined in \eqref{vec:v-beta-n}) is contained in $v(\mathcal{C}, T)$. Let $\Delta$ be the smallest positive distance between the set of iterates $T^n({\beta})$, where $\beta \in \mathcal{C}$ and $0 \le n \le M$, and the set of discontinuities $\mathcal{C}$. Note that the smallest distance between those sets can be zero if there are critical connections. That is why in the definition of $\Delta$ we focus only on the positive distances.

Define $\epsilon_0 > 0$ to be sufficiently small so that the following holds. For any perturbation $\delta$ of the parameters $(\gamma \, \beta)$ of $T$ such that $|\Tilde{\gamma}_i - \gamma_i| < \epsilon_0$ for $1 \le i \le r$ and $|\Tilde{\beta}_i - \beta_i| < \epsilon_0$ for $1 \le i \le r-1$, where $(\Tilde{\gamma} \, \Tilde{\beta}) := (\gamma \, \beta) + \delta$, we have that for every vector $v \in v(\mathcal{C},T)$:
\[
\langle v , \delta \rangle < \Delta/4.
\]
If we denote the perturbed map by $\Tilde{T}$, this implies:
\begin{align*}
d(\Tilde{T}^n(\Tilde{\beta}), \Tilde{\beta}_*) &> d(T^n(\beta), \betas) - d(\betas, \Tilde{\beta}_*) - d(\Tilde{T}^n(\Tilde{\beta}), T^n(\beta)) \\
&> \Delta - \frac{\Delta}{4} - \frac{\Delta}{4} \\ &> \frac{\Delta}{2} > 0,
\end{align*}
for every $\beta, \betas \in \mathcal{C}$ and $0 \le n \le M$, since $\langle v(\beta,n,T) \,, \delta \rangle = \Tilde{T}^n(\Tilde{\beta}) - T^n(\beta)$.
Then by the definition of $M$ and $\Delta$, this means that the itinerary of every discontinuity $\beta$ of $T$ does not change at the times $n \le M$ for which $T^n{\beta} \notin \mathcal{C}$, for every such sufficiently small perturbation $\delta$. In other words, the itinerary can only change at times $n \le M$ for which $T^n{\beta} \in \mathcal{C}$, which can happen for arbitrarily small perturbations. By making $\epsilon_0$ smaller if necessary, we can extend the same conclusion to backward iterates of $\beta \in X$, i.e.\ iterates of $\beta$ under $T^{-1}_{\vert X}$ for times $n \le M$. This ensures that each such $\Tilde{\beta}$ still has a $\Tilde{T}^{-n}$-preimage for all times $n \le M$ such that $T^{-n}(\beta) \notin \mathcal{C}$ and that these preimages have the same itineraries up to the landing time on $\Tilde{\beta}$.

Let $0 < \epsilon < \epsilon_0$ and  $\epsilon^{\gamma}_1, \dots \epsilon^{\gamma}_{N}, \epsilon^{\beta}_0, \dots \epsilon^{\beta}_{N} \in (-\epsilon, \epsilon)$ be arbitrary real numbers. By Corollary \ref{cor:lin-indep}, the vectors:
\begin{align}
\label{eq:lin-indep-vectors}
\begin{split}
&L_{j}^0, R^{0,+}_j, C^{0,+}(j,k), C^{0,-}(j,k), \\
&C^{i,+}(0,k) \text{ for } 1 \le k \le m_0^{i,+}, \\
&C^{i,-}(1,k) \text{ for } 1 \le k < m_0^{i,-}, \\
&C_{\beta}, \text{ for } \beta \in \mathcal{C}_{\neg X}.
\end{split}    
\end{align}
form a linearly independent set. Thus there exists a vector $\delta \in \mathbb{R}^{2r-1}$ such that:
\begin{align}
\label{vec:l}
& \langle L_{j}^0, \delta \rangle = \epsilon_j^{\beta}, \\
\label{vec:r}
& \langle R_{j-1}^{0,+}, \delta \rangle = \epsilon_j^{\gamma} + \epsilon_{j-1}^{\beta}, \text{ for } 1 \le j \le N_0.  
\end{align}
Moreover, the following values can be chosen arbitrarily within the specified bounds:
\begin{align}
\label{vec:c0+}
&\langle C^{0,+}(j,k), \delta \rangle \in [0,\epsilon), \\
\label{vec:c0-}
&\langle C^{0,-}(j,k), \delta \rangle \in (-\epsilon,0], \\
\label{vec:ci}
&\langle C^{i,+}(0,k), \delta \rangle, \langle C^{i,-}(1,k), \delta \rangle \in (-\epsilon, \epsilon) \text{ for } i \ge 1, \\
\label{vec:cnX}
& \langle C_{\beta}, \delta \rangle \in (-\epsilon,\epsilon) \text{ for } \beta \in \mathcal{C}_{\neg X}.
\end{align}
The existence of such a vector $\delta$ is standard linear algebra, and it can be constructed using the Gram matrix of the vectors in \eqref{eq:lin-indep-vectors}. Thus $\delta$ can be chosen to linearly depend on $\epsilon, \epsilon^{\gamma}_1, \dots \epsilon^{\gamma}_{N}, \epsilon^{\beta}_0, \dots \epsilon^{\beta}_{N}$, so we clearly have that $|\Tilde{T} - T| = |\delta| \to 0$ as $\epsilon \to 0$.

We now show that this is the desired perturbation. By our choice of $\epsilon_0$, each discontinuity $\beta(j)$, for $1 \le j \le N_0-1$, still has a $\Tilde{T}^{l_j}$-preimage $\Tilde{a}_j^0$ that has the same itinerary up to time $l_j$ as $a_j$. Since $\langle L_j^{0}, (\gamma \, \beta) \rangle = a_j^0$, \eqref{vec:l} implies that $\Tilde{a}_j^0 - a_j^0 = \epsilon_j^{\beta}$. A similar argument shows that $\Tilde{a}_0^{+}-a_0^{+} = \epsilon_0^{\beta}$ and $\Tilde{a}_N^{-}-a_N^{-} = \epsilon_N^{\beta}$, so item 1. from the conclusion follows.

Assume now that $a_0^+$ does not land on a discontinuity before returning to $J$, with the case for $a_n^-$ being analogous. In this case, there is an interval of points $[a_0^+-\epsilon', a_0^+]$ that maps forward continuously under iterates of $T$ up to time $r_0$. By choosing $\epsilon_0 < \epsilon'$ small enough, this interval also maps forward continuously under iterates of $\Tilde{T}$ up to time $r_0$. Up to making $\epsilon'$ smaller, we may therefore choose $\Tilde{a}_0^+$ to be any point in $(a_0^+-\epsilon_0^{\beta}, a_0^+ + \epsilon_0^{\beta})$. In \eqref{eq:lin-indep-vectors}, we can simply omit the vectors $L^0_j$ and $C^{0,+}(0,k)$, and the rest of the analysis remains unchanged. Thus item 1. follows in this case as well. 

For 2., recall from \eqref{eq:R-vec-property} that $Tr(J_j,r_j) + a_{j-1}^{+} = T^{r_j}(a_{j-1}^+) = R_J(a_{j-1}^+) = \langle R_{j-1}^{+}, (\gamma \, \beta) \rangle$, so we have that:
\begin{align*}
\Tilde{T}r(\Tilde{J}_j,r_j) - Tr(J_j,r_j) &= \Tilde{T}^{r_j}(\Tilde{a}_{j-1}^+) - \Tilde{a}_{j-1}^+ - (T^{r_j}(a_{j-1}^+) - a_{j-1}^+) \\
&= \Tilde{T}^{r_j}(\Tilde{a}_{j-1}^+) - T^{r_j}(a_{j-1}^+) + a_{j-1}^+ - \Tilde{a}_{j-1}^+ \\
&= \langle R_{j-1}^{+}, \delta \rangle - \epsilon_{j-1}^{\beta} \\
&= \epsilon_{j}^{\gamma},
\end{align*}
where the last line follows from \eqref{vec:r}. Thus 2. follows. Finally (a) follows from \eqref{vec:c0+} and \eqref{vec:c0-}, while (b) follows from \eqref{vec:ci} and \eqref{vec:cnX}.

Assume now that $J$ is a maximal periodic interval of $X$, with minimal period $r$. In this case we set $J_1 := J$, and $J_2, \dots, J_n$ equal to maximal periodic intervals with pairwise disjoint orbits that contain all of the periodic discontinuities of $X$ not contained in the orbit of $J_1$. We then apply Corollary \ref{cor:lin-indep} only to the intervals $J_1, \dots, J_n$, omitting the interval $J_0$, which can clearly be done. We choose the vector $\delta$ so that:
\begin{align}
\label{vec:c1+}
&\langle C^{1,+}(0,k), \delta \rangle \in [0,\epsilon) \text{ for } 1 \le k < m_0^{i,+}, \\
\label{vec:c1m+}
&\langle C^{1,+}(0,m_0^{i,+}), \delta \rangle = \epsilon^{\gamma}_1 - \sum_{k=1}^{m_0^{i,+}-1} \langle C^{1,+}(0,k), \delta \rangle, \\
\label{vec:c1-}
&\langle C^{1,-}(1,k), \delta \rangle \in (-\epsilon,0] \text{ for } 1 \le k < m_1^{i,-}, \\
\label{vec:ci>1}
&\langle C^{i,+}(0,k), \delta \rangle, \langle C^{i,-}(1,k), \delta \rangle \in (-\epsilon, \epsilon) \text{ for } i \ge 2, \\
\label{vec:cnX-again}
& \langle C_{\beta}, \delta \rangle \in (-\epsilon,\epsilon) \text{ for } \beta \in \mathcal{C}_{\neg X}.
\end{align}
Similarly as in the previous case, by our choice of $\epsilon_0$ and by \eqref{vec:c1+}, \eqref{vec:c1m+}, and \eqref{vec:c1-}, the itineraries of $a_0^+$ and $a_1^-$ remain the same after perturbation, so 1. follows. Since by \eqref{eq:R-vec-property} we have that $Tr(J,r) = R_J(a_0^+) - a_0^+ = \langle \sum_{k=1}^{m_0^{i,+}} C^{1,+}(0,k), \delta \rangle$, 2. follows from \eqref{vec:c1+} and \eqref{vec:c1m+}. Finally, (a) follows from \eqref{vec:c1+}, while (b) follows from \eqref{vec:ci>1} and \eqref{vec:cnX-again}.
\end{proof}

\section{Proof of Theorem \ref{thm:lin-dep}}
\label{sec:proof-of-lin-dep}

\subsection{Overview of the proof}
\label{subsec:lin-dep-strategy}

In this subsection, we give an informal description of the strategy of the proof of Theorem \ref{thm:lin-dep}. The main idea is that a linear dependence of the vectors in the statement of Theorem \ref{thm:lin-dep} gives a set of linear equations satisfied by the coefficients $\alpha \in \mathcal{A}$ associated to these vectors. One can then use the dynamics of $T$ to produce a larger set of such linear equations, \textit{still with the same set of coefficients} $\mathcal{A}$. Repeating this analysis inductively produces a large number of equations with coefficients in $\mathcal{A}$, from which we derive the required relationships between these coefficients from Theorem \ref{thm:lin-dep}. The proof is divided into two steps, which we describe in more detail in the remainder of this subsection. The common features of both steps are certain \textit{refinement procedures} on subintervals of $I$.

\subsubsection{First refinement procedure}
\label{subsubsec:first-refinement}

We will demonstrate the proof for a simple case of $4$ vectors $v_1, v_2, v_3, v_4$. Assume that they are linearly dependent with coefficients $\alpha_1, \alpha_2, \alpha_3, \alpha_4$:

\begin{equation}
\label{eq:examp-lin-dep}
\alpha_1 v_1 + \alpha_2 v_2 + \alpha_3 v_3 + \alpha_4 v_4 = 0.
\end{equation}

Each of the vectors $v_i$ corresponds to the forward orbit of some point $x_i$ up to some finite time $k_i$: $\{ x_i, T(x_i), \dots, T^{k_i - 1}(x_i) \}$. We consider these orbits as differently coloured points (later called distinguished points in Subsection \ref{subsec:refining-part}) in the dynamical plane: $v_1$ corresponds to \textcolor{black}{blue}, $v_2$ corresponds to \textcolor{black}{red}, $v_3$ corresponds to \textcolor{black}{green}, and $v_4$ corresponds to \textcolor{black}{yellow}. An example for $T$ on $r=6$ intervals is shown in Figure \ref{fig:coloured points in the dynamical plane}. Coloured points have the following dynamical property: almost every coloured point is mapped onto and gets mapped to a point of the same colour under $T$. The exceptions are the first and last coloured points in the corresponding orbit, which do not necessarily have a preimage or image of the same colour, respectively.

\begin{figure}[h]
    \centering
    \includegraphics[width=\linewidth]{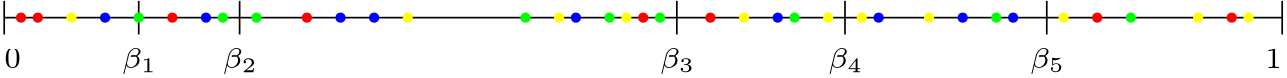}
    \caption{Coloured points associated to some map $T$ on $r=6$ intervals.}
    \label{fig:coloured points in the dynamical plane}
\end{figure}

For each of the intervals $I_1, \dots, I_6$ we may consider the number of points of a single colour contained in them: let $N_i(j)$ be the number of points corresponding to the $i$'th vector contained in $I_j$, where $1 \le i \le 4$ and $1 \le j \le 6$ (this notation will be slightly different in Subsection \ref{subsec:refining-part} because the setting will be more general). By definition, $N_i(j)$ is equal to the $j$'th $\gamma$-coefficient of the vector $v_i$. With this in mind, taking the $j$'th $\gamma$-coefficient in equation \eqref{eq:examp-lin-dep}, we get that:

\begin{equation}
\label{eq:examp-lin-eq}
\alpha_1 N_1(j) + \alpha_2 N_2(j) + \alpha_3 N_3(j) + \alpha_4 N_4(j) = 0.
\end{equation}

For example, in Figure \ref{fig:coloured points in the dynamical plane} and for $j=3$, this translates to:

\begin{equation}
\label{eq:examp-lin-eq-j=3}
\alpha_1 3 + \alpha_2 2 + \alpha_3 4 + \alpha_4 3 = 0.
\end{equation}

Thus the linear dependence in equation \eqref{eq:examp-lin-dep} gives us a set of equations as in \eqref{eq:examp-lin-eq} with the same coefficients $\alpha_1, \alpha_2, \alpha_3, \alpha_4$. Each equation is associated with one of the intervals $I_s$, which form a partition of $I$. In order to establish a stronger relationship between these coefficients, as is required for Theorem \ref{thm:lin-dep}, we will use the dynamics of $T$ to non-trivially refine this partition into a new partition which has one interval more. This new partition will also have the property that associated with each interval, there is an equation with coefficients $\alpha_1, \alpha_2, \alpha_3, \alpha_4$. Thus this refinement of the partition of $I$ into intervals $I_s$ produces one additional equation that holds for $\alpha_1, \alpha_2, \alpha_3, \alpha_4$. Our goal is to inductively produce more and more of such equations.

To do this, consider the open interval $G$ of maximal length between two coloured points that are contained in a single interval $I_s$. We may assume that the preimage of $G$ does not contain such a maximal interval (in general, $G$ needs to satisfy a number of assumptions, see Lemma \ref{lem:dist-pair}). In Figure \ref{fig:gap interval G}, the interval $G$ is contained in $I_3$. Let $I^l_3$ and $I^r_3$ be the half-open subintervals of $I_3$ to the left and to the right of the midpoint of $G$, respectively, as drawn in Figure \ref{fig:gap interval G}. We claim that these intervals also satisfy a version of equation \eqref{eq:examp-lin-eq} for the number of coloured points contained in them. Let $N^l_i(3)$ and $N^r_i(3)$ be the number of coloured points corresponding to $v_i$ contained in $I^l_3$ and $I^r_3$, respectively.

\begin{figure}[h]
    \centering
    \includegraphics[width=\linewidth]{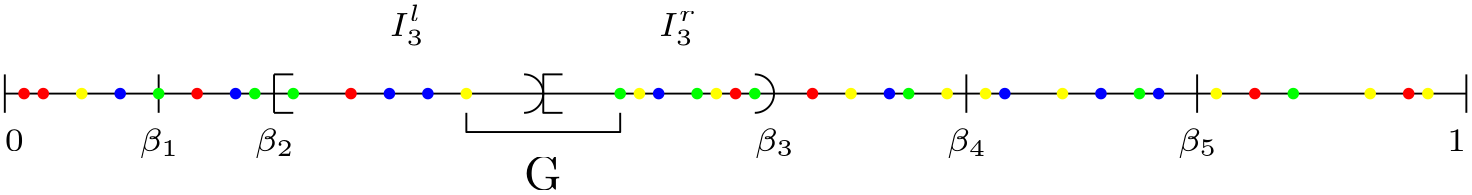}
    \caption{The gap interval $G$ and subintervals $I^l_3$ and $I^r_3$ for the map $T$ from Figure \ref{fig:coloured points in the dynamical plane}.}
    \label{fig:gap interval G}
\end{figure}

Because $T$ is an isometry, this interval $G$ has the following property: for all $1 \le s \le 6$, all coloured points inside $I_s$ \textit{all} get mapped (under a single iterate of $T$) to the same side of $G$, i.e.\ either to the left or to the right of $G$. Otherwise, if some coloured points get mapped to the left and some to the right, there would exist a gap interval of larger length. Because of this, we define the set $L$ of all indices $1 \le s \le 6$ for which all coloured points in $I_s$ get mapped to the left of $G$. Since almost all coloured points are mapped to and from the points of the same colour, the following almost-equality holds for $N_i^l(3)$:

\begin{equation}
\label{eq:examp-lin-eq-l-i}
N_i^l(3) \approx \left( \sum_{j \in L} N_i(j)\right) - N_i(1) - N_i(2),
\end{equation}
\noindent
since the number of coloured points to the left of the midpoint of $G$ should be equal to the number of coloured points that get mapped to the left of the midpoint of $G$. Equality does not hold in general, because, as noted earlier, the first and the last point in the orbit corresponding to a given colour do not necessarily have a preimage or image of the same colour, respectively. Because of this, the error in \eqref{eq:examp-lin-eq-l-i}, i.e.\ the difference between the two sides of the equation, is equal to either $-1$, $0$, or $1$. We define $\Delta_i^l \in \{-1,0,1\}$ as the error term of this equality, so that:

\begin{equation}
\label{eq:examp-delta}
N_i^l(3) = \left( \sum_{j \in L} N_i(j) \right) - N_i(1) - N_i(2) + \Delta_i^l.
\end{equation}
\noindent
Multiplying equation \eqref{eq:examp-delta} by $\alpha_i$, and summing over all $i$, we get that:

\begin{equation}
\label{eq:examp-lin-eq-l}
\sum_{i=1}^4 \alpha_i N_i^l(3) =\sum_{i=1}^4 \alpha_i \left( \left(\sum_{j \in L} N_i(j) \right) - N_i(1) - N_i(2) + \Delta_i^l\right).
\end{equation} 
\noindent
Rearranging the right-hand side of the equation, we have the following equality:

\begin{align}
\label{eq:examp-lin-eq-l-simplified}
\sum_{i=1}^4 \alpha_i N_i^l(3) &= \left( \sum_{j \in L} \sum_{i=1}^4 \alpha_i N_i(j) \right) - \sum_{i=1}^3 \alpha_i N_i(1) - \sum_{i=1}^4 \alpha_i N_i(2) + \sum_{i=1}^4 \alpha_i \Delta_i^l \\
&= \sum_{i=1}^4 \alpha_i \Delta_i^l,
\end{align}
\noindent
where the second line follows from \eqref{eq:examp-lin-eq}. Thus in order to show that $\sum_{i=1}^4 \alpha_i N_i^l(3) = 0$, we only need to show $\sum_{i=1}^4 \alpha_i \Delta_i^l = 0$. To prove this fact, we crucially use that there is only a single interval $J_0$ in Theorem \ref{thm:lin-dep} for which we consider the full orbit of the return map. Moreover, we make use of another set of the equations for $\alpha_1, \alpha_2, \alpha_3, \alpha_4$, obtained from \eqref{eq:examp-lin-dep} by considering all of the $\beta$-coefficients of $v_1, v_2, v_3, v_4$ (up to this point, all of our discussion involved only the $\gamma$-coefficients). The details are in the proof of Proposition \ref{prop:refine}.

The conclusion $\sum_{i=1}^4 \alpha_i N_i^r(3) = 0$, follows from $\sum_{i=1}^4 \alpha_i N_i^l(3) = 0$, if we subtract the latter equation from \eqref{eq:examp-lin-eq} for $j=3$. In Figure \ref{fig:coloured points in the dynamical plane}, these conclusions translate to:

\begin{align*}
\alpha_1 2 + \alpha_2 1 + \alpha_3 1 + \alpha_4 1 &= 0, \\
\alpha_1 1 + \alpha_2 1 + \alpha_3 3 + \alpha_4 2 &= 0,
\end{align*}
\noindent
or more formally:

\begin{align*}
\alpha_1 N_1^l(3) + \alpha_2 N_2^l(3) + \alpha_3 N_3^l(3) + \alpha_4 N_4^l(3) &= 0, \\
\alpha_1 N_1^r(3) + \alpha_2 N_2^r(3) + \alpha_3 N_3^r(3) + \alpha_4 N_4^r(3) &= 0.
\end{align*}

\begin{figure}[h]
    \centering
    \includegraphics[width=\linewidth]{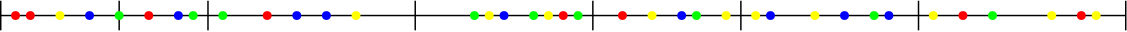}
    \caption{The refinement of the partition from Figure \ref{fig:coloured points in the dynamical plane}.}
    \label{fig:refined partition}
\end{figure}

We have thus increased the number of equations with coefficients $\alpha_1, \alpha_2, \alpha_3, \alpha_4$, as required. As a result, we have refined the partition of $I$ into continuity intervals $T$ into a partition that has one more interval, as in Figure \ref{fig:refined partition}. To proceed inductively, observe that in this analysis, it was at no point necessary to use the fact that the intervals $I_1, \dots, I_6$ are exactly the maximal intervals of continuity of $T$. It is sufficient that they are intervals on which $T$ is continuous and that the equations in \eqref{eq:examp-lin-eq} hold for them.

Thus we may proceed inductively for as long as we can find the required maximal intervals $G$. Ideally, the refinement procedure would continue until every subinterval of the partition would contain either one or two points, and at that point, Theorem \ref{thm:lin-dep} would follow. In the general case, the interval $G$ needs to satisfy several additional assumptions (see the statement of Lemma \ref{lem:dist-pair}), so this first refinement procedure stops at some maximal level, when such an interval $G$ can no longer be found. It can be shown that at this point, already every coloured point outside of $X$ is the only one contained in some interval of this partition of maximal level, so the conclusion of Theorem \ref{thm:lin-dep} for the vectors associated to discontinuities not contained in $X$, i.e.\ $\beta \in \mathcal{C}_{\neg X}$, already follows (Corollary \ref{cor:alpha_x=0}). Thus we may forget about these vectors and focus exclusively on the dynamics on $X$.

\subsubsection{Second refinement procedure}
\label{subsubsec:second-refinement}

The second part of the proof, contained in Subsection \ref{subsec:refine-alpha-subord}, starts with the observation (Corollary \ref{cor:a-subord-int-in-X}) that as a result of the first refinement procedure, each interval component $H$ of $X$ is partitioned into smaller subintervals $H_1, \dots, H_k$ for which the numbers of coloured points contained in them satisfy equations of type \eqref{eq:examp-lin-eq} with the same coefficients $\alpha_1, \alpha_2, \alpha_3, \alpha_4$. Such intervals are called $\mathcal{A}$-subordinate intervals (Definition \ref{def:alpha-subordinate}). The $T$-preimage of $H$ in $X$ is also partitioned into $\mathcal{A}$-subordinate intervals $G_1, \dots, G_l$. The case of $k=5$ and $l = 3$ is illustrated in Figure \ref{fig:alpha subordinate interval refinement}.

\begin{figure}[h]
    \centering
    \includegraphics[width=\linewidth]{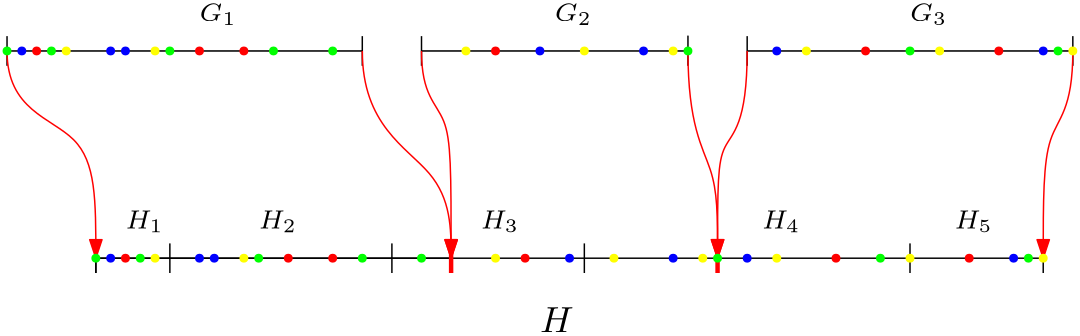}
    \caption{Example interval $H$ and its preimage in $X$ for $k=5$ and $l=3$.}
    \label{fig:alpha subordinate interval refinement}
\end{figure}

\noindent
One may then show (Proposition \ref{prop:push-fwd}) that the partition of the preimage of $H$ into $\mathcal{A}$-subordinate intervals can be `pushed forward' to obtain a refinement of the partition of $H$ into $H_1, \dots, H_k$. The refinement is obtained by adding the endpoints of the images of the intervals $G_1, \dots, G_l$, indicated by red vertical lines in Figure \ref{fig:alpha subordinate interval refinement}, to the partition into intervals $H_1, \dots, H_k$, whose endpoints are denoted by black vertical lines.

The proof is by induction: we start with the leftmost interval of the new partition of $H$ and move to the right. That the subsequent interval in the left-right order is $\mathcal{A}$-subordinate follows from the inductive hypothesis and calculations similar to those in \eqref{eq:examp-lin-eq-l-simplified}. Finally, in Subsection \ref{subsec:proof-of-lin-dep} we repeatedly apply the second refinement procedure to finalise the proof of Theorem \ref{thm:lin-dep}.

\subsection{Refining partitions of $I$}
\label{subsec:refining-part}

For the purpose of the proof, we will simplify the notation used in the statement of Theorem \ref{thm:lin-dep}. Denote by $\mathcal{V}$ the set of all vectors from \hyperref[eq:vectors]{(*)} and by $\mathcal{A}$ the set of all coefficients from \hyperref[eq:a-coefficients]{(**)}. 

A critical connection vector $v \in \mathcal{V}$ corresponds to the orbit of a discontinuity of $T$ up to the time of landing on another discontinuity. For such $v$, let $P(v)$ be the ordered set of points along the orbit corresponding to $v$. The first landing vectors correspond to the orbit of a discontinuity of $R_{J}$ up to the first time of landing onto discontinuity of $T$, critical connection vectors corresponding to the orbit of a discontinuity of $T$ up to the landing time to another discontinuity of $T$, and the return vectors correspond to the orbit of a discontinuity up to the time of landing into $J$. We adopt the convention that the first point in the orbit corresponding to the vector is included in $P(v)$, while the last one is not. For example, this means that the $P(v)$ associated to the critical connection vector $C^{0,+}(0,1)$ is equal to $\{ \beta^{0,+}(0,1), T(\beta^{0,+}(0,1)), \dots, T^{q^{0,+}(0,2) - 1}(\beta^{0,+}(0,1)) \}$. If a discontinuity of $T$ is already contained in $J_0$, then the set $P(v)$ for the associated first landing vector is empty. We will exclude such vectors from the set $\mathcal{V}$. For a vector $v \in \mathcal{V}$, let us denote by $v_{first}$ the first element of $P(v)$ and by $v_{last}$ the last element of $P(v)$.

\begin{definition}
Let $\mathcal{V}$ be the set of all vectors as defined above. Then we define the set of all \textit{distinguished points} (with respect to $\mathcal{V}$) as the union of all points in $P(v)$:
\[
\mathcal{P} := \bigcup_{v \in \mathcal{V}} P(v).
\]
\end{definition}
Note that $\mathcal{P}$ is by definition a set that includes both signed (from critical connection and return vectors) and geometric (from first landing vectors) points. For a vector $v \in \mathcal{V}$, we will denote by $\alpha(v)$ its corresponding coefficient. The assumption \eqref{eq:lin-dep-sum} of Theorem \ref{thm:lin-dep} then reads as:

\begin{equation}
\label{eq:lin-dep}
\sum_{v \in \mathcal{V}} \alpha(v) v = 0.
\end{equation}
Let $v_s$ be the $s$-th $\bm{e}$-coefficient of $v$. Equation \eqref{eq:lin-dep} implies:

\[
\sum_{v \in \mathcal{V}} \alpha(v) v_s = 0, \text{ for all } 1 \le s \le r.
\]
Note that $v_s$ also equals the number of points from $P(v)$ contained in $I_s$. This means that we have the initial partition $\mathcal{I}^0$ of $I$ into intervals $I_s^0 := I_s$, for $1 \le s \le r$, such that a linear equation on the number of points from each $P(v)$ holds on each interval of this partition. This motivates the following two definitions:

\begin{definition}
\label{N}
Let $I^*$ be any interval in $I$. Then we define the number of points from $P(v)$ in $I^*$ as:
\[
N(I^*,v) := |P(v) \cap I^*|.
\]
\end{definition}

Next, we introduce intervals for which the number of points from each $P(v)$ satisfies a linear equation with respect to the coefficients in $\mathcal{A}$:

\begin{definition}[$\mathcal{A}$-subordinate interval]
\label{def:alpha-subordinate}
Let $I^*$ be an interval in $I$. We say that $I^*$ is an \textit{$\mathcal{A}$-subordinate interval} if the following equation holds: 
\[
\sum_{v \in \mathcal{V}} \alpha(v) N(I^*,v) = 0.
\]
\end{definition}

Let $\mathcal{I}^* = \bigcup^k_{s=1} I_s^*$ be a partition (not necessarily dynamical) of $I$ into intervals $I_s^*$, i.e.\ $I = \bigsqcup_{s=1}^k I_s^*$. We say $\mathcal{I}$ is an \textit{$\mathcal{A}$-subordinate partition} if every interval $I_s^*$ is an $\mathcal{A}$-subordinate interval. By the discussion above, $\mathcal{I}^0$ is an $\mathcal{A}$-subordinate partition. Our goal will be to refine the level-$0$ partition $\mathcal{I}^0$ into $\mathcal{A}$-subordinate partitions which consist of more intervals.

More precisely, we will define a sequence of partitions $\mathcal{I}^m = \bigcup^{r+m}_{s=1} I^m_s$ of $I$ into intervals $I_s^m$ so that:

\begin{enumerate}
    \item $\mathcal{I}^m$ is a refinement of $\mathcal{I}^{m-1}$;
    \item $\mathcal{I}^m$ is an $\mathcal{A}$-subordinate partition.
\end{enumerate}
This procedure will terminate at some maximal level $M$, described in Definition \ref{def:max-part}. Our first goal is to find a criterion for when the partition $\mathcal{I}^m$ can be refined into $\mathcal{I}^{m+1}$ by splitting some interval $I_s^m$ of $\mathcal{I}^m$ into two intervals. We will show that this can be done by splitting along the gap interval between a certain pair of two neighbouring, but not touching, distinguished points. Recall that two signed points $x$ and $y$ touch if $\{ x,y \}$ = $\{z^+, z^-\}$ for some point $z$. The following lemma shows how to find such a pair:

\begin{lemma}[Distinguished pair]
\label{lem:dist-pair}
Let $\mathcal{I}^m$ be an $\mathcal{A}$-subordinate partition. Assume that there exists a triple $(I_s^m, p_1, p_2)$, where $I^m_s$ is an element of the partition, and $p_1,p_2$ is a pair of signed and distinct distinguished points in $I^m_s$, with $p_1$ to the left of $p_2$, such that they are:
\begin{enumerate}[label=(\alph*)]
    \item Neighbouring, but not touching, in $I_s^m$;
    \item There exist no $1 \le j \le N_0-1$ and $0 \le k < r_j^0$ such that $p_1$ and $p_2$ are contained in $T^k(J_j^0)$, and no $i \ge 1$ and $k \ge 0$ such that $p_1$ and $p_2$ are contained in $T^k(J_i)$.
\end{enumerate}
Then one can find a triple, still denoted by $(I^m_s, p_1, p_2)$, that additionally satisfies:

\begin{enumerate}[label=(\alph*), resume]
    \item The gap interval $G = [p_1,p_2]$ between the points has maximum length among all triples satisfying (a) and (b);
   
    \item Either at least one of the points $p_1$, $p_2$ has no preimage in $\mathcal{P}$ or no pair of preimages $q_1, q_2 \in \mathcal{P}$ of $p_1, p_2$ is contained in the same element of the partition $\mathcal{I}^m$.
\end{enumerate}
\end{lemma}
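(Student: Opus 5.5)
We argue by extremality: among all triples satisfying (a) and (b) we choose one of maximal gap length, and if it fails (d) we replace it by the triple formed by preimages of $p_1,p_2$, which again satisfies (a)--(c).

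\textbf{Step 1: existence of a maximiser.} The set $\mathcal{P}$ of distinguished points is finite, since every $P(v)$ is a finite orbit segment and $\mathcal{V}$ is finite. The partition $\mathcal{I}^m$ is also finite. Hence only finitely many triples $(I^m_s,p_1,p_2)$ exist, and by assumption at least one of them satisfies (a) and (b). We may therefore choose one maximising the length $|G|=d(p_1,p_2)$, so (c) holds. If this triple already satisfies (d) we are done.

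\textbf{Step 2: pulling back when (d) fails.} Suppose (d) fails. Then $p_1,p_2$ have preimages $q_1,q_2\in\mathcal{P}$, meaning $T(q_i)=p_i$, with $q_1,q_2$ in a common element $I^m_t$. Because $\mathcal{I}^m$ refines $\mathcal{I}^0$, the interval $I^m_t$ lies inside some $I_u$, on which $T$ is a translation. So $T$ maps $[q_1,q_2]$ isometrically onto $[p_1,p_2]$, preserving order and signs. In particular $q_1\ne q_2$, $q_1$ lies to the left of $q_2$, and $d(q_1,q_2)=|G|$. The points $q_1,q_2$ do not touch, since touching points $z^\pm$ in the interior of a continuity interval map to touching points.

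\textbf{Step 3: checking (a)--(c) for the new triple.}

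For (a), suppose some distinguished point $q$ lies strictly between $q_1$ and $q_2$. If $q$ is not the last point $v_{last}$ of its $P(v)$, then $T(q)\in\mathcal{P}$ lies strictly between $p_1$ and $p_2$, which contradicts the neighbouring property of $p_1,p_2$. If no such obstructing $q$ exists, the pair $q_1,q_2$ is neighbouring. If some $q$ does obstruct, then $q_1,q$ or $q,q_2$ is a neighbouring non-touching pair with gap strictly smaller than $|G|$. That alone gives no contradiction, so this exceptional case, where $q=v_{last}$, has to be ruled out separately (see below).

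For (b), suppose $q_1,q_2\in T^k(J^0_j)$ with $k<r^0_j$, or $q_1,q_2\in T^k(J_i)$. The continuity interval $J^0_j$ maps continuously up to its return time, and $[q_1,q_2]$ lies in one continuity interval of $T$. Hence $p_1,p_2\in T^{k+1}(J^0_j)$ when $k+1<r^0_j$. When $k+1=r^0_j$, the points land in $J_0$ and we get membership of some $T^0(J^0_{j'})$ unless $p_1,p_2$ are separated by some $a_{j'}$. For maximal periodic intervals the orbit is periodic, so the analogous statement is immediate. Either way this contradicts (b) for $(p_1,p_2)$, so $(q_1,q_2)$ satisfies (b).

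For (c), the gap length is unchanged, so the new triple is again maximal.

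\textbf{Step 4: the obstacle and termination.} The main obstacle is the exceptional points: those $q$ with $q=v_{last}$, whose image is not distinguished, and the return-time boundary in (b). I would try to show that $v_{last}$ points sit immediately before a discontinuity of $T$ (for critical connection and landing vectors), or immediately before landing in $J_0$ (for return vectors). In either case $T(q)$ lies at a discontinuity or at an endpoint of $J_0$ that is itself touched by a distinguished point. Then $T(q)$ would still separate $p_1$ from $p_2$ via a touching distinguished point, contradicting (a) for $(p_1,p_2)$.

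Finally, we iterate the pullback. Each step moves the pair backward along orbits in $\mathcal{P}$, and every orbit segment $P(v)$ has a first point $v_{first}$ with no preimage in $P(v)$. If the backward chain never terminated, it would cycle in the finite set $\mathcal{P}$. Since $T$ acts isometrically on the pairs, a cycle would force $p_1,p_2$ to be periodic with the same itinerary, and hence to lie in a common maximal periodic interval $T^k(J_i)$ or in the orbit of $J_0$. This is excluded by (b). So the process terminates at a triple satisfying (a)--(d).
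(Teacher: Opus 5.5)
Your overall route matches the paper's. You pick a maximal-gap triple and pull it back to preimages $q_1,q_2$ lying in one element of $\mathcal{I}^m$. You check that (a)--(c) persist, and you terminate because an endless pull-back yields a periodic gap whose endpoints share an itinerary, contradicting (b).

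The genuine gap is the step you flag and leave open. You must rule out a distinguished point $q=v_{last}$ strictly between $q_1$ and $q_2$ whose image $T(q)\in(p_1,p_2)$ is not distinguished. Your proposed fix fails on both branches:
\begin{itemize}
\item \emph{Return vectors.} $T(v_{last})=R_{J_0}(a_j^{\pm})$ is in general an endpoint of some $R_{J_0}(J^0_{j'})$ lying in the interior of $J_0$. It is not an endpoint of $J_0$, and nothing guarantees it is touched by a distinguished point.
\item \emph{Critical connection and landing vectors.} $T(q)$ is a discontinuity, for instance when $C_\beta$ lands on a discontinuity in $\mathcal{C}_2$ that starts no vector of $\mathcal{V}$. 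Its touching partner need not be distinguished. Even when it is, the partner may coincide with $p_1$ or $p_2$, as in $p_1=\beta'^-$, $T(q)=\beta'^+$, so it does not separate them.
\end{itemize}

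The paper closes this with two separate observations.
\begin{itemize}
\item If $T(q)$ is a discontinuity strictly between $p_1$ and $p_2$, this is impossible outright. The points $p_1,p_2$ lie in one element of $\mathcal{I}^m$. Since $\mathcal{I}^m$ refines $\mathcal{I}^0$, that element lies in a single continuity interval of $T$.
\item If instead $T(q)\in J_0$, then $(p_1,p_2)$ meets $J_0$. The endpoints $x^{0,+}, y^{0,-}$ of $J_0$ and the points $a_j$ (or their signed parts) are themselves distinguished. A short case analysis on whether neither, one, or both of $p_1,p_2$ lie in $J_0$ shows they can be neighbouring only if both lie in a single $J^0_j$. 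That is excluded by (b).
\end{itemize}
With these two arguments in place of your Step 4 heuristic, the rest of your proof goes through.

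One minor point: in your verification of (b), the sub-case where $p_1,p_2$ are separated by some $a_{j'}$ contradicts (a), not (b).
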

\noindent
We call a pair of points from the conclusion of Lemma \ref{lem:dist-pair} a \textit{distinguished pair}.

\begin{proof}

By assumption, we can always choose $p_1$ and $p_2$ such that they satisfy (a) and (b), and by choosing the pair with maximal distance, we can assume they also satisfy (c). Assume that this pair does not satisfy assumption (d). We will show that such a pair can be `pulled back' to find a different pair of points that also satisfies (d).

By assumption, the pair of points $p_1, p_2$ has at least one pair of $T$-preimages $q_1, q_2 \in \mathcal{P}$ that are contained in a single interval of $\mathcal{I}^m$. Let us show that this pair $q_1, q_2$ also satisfies properties (a), (b) and (c). Indeed, if they are contained in a single element of the partition, then $T$ maps $[q_1,q_2]$ continuously to $G$, as $\mathcal{I}^m$ is a refinement of $\mathcal{I}^0$. Thus $q_1$ and $q_2$ are not touching and not contained in an iterate of the form $T^k(J_j^0)$ or $T^k(J_i)$, so (b) follows. Moreover, if they are neighbouring, i.e.\ if there is no point from $\mathcal{P}$ between them, then they also satisfy (a) and (c). 

Assume the contrary, that there is some point $q \in \mathcal{P} \cap [q_1, q_2]$ between points $q_1$ and $q_2$. Since this interval maps forward continuously, $T(q) \in G$ is not contained in $\mathcal{P}$. This is only possible if $q = v_{last}$, where $v$ is either a critical connection vector corresponding to landing on a point in $\mathcal{C}_2$, or $q = v_{R,last}$, where $v$ is a return vector. In the first case, $T(q)$ must be a discontinuity, which contradicts $p_1$ and $p_2$ being in the same partition element of $\mathcal{I}^m$. In the second case, $T(q) \in J_0$, for some $1 \le i \le n$, so the interval $(p_1,p_2)$ intersects $J_0$. We will show that this means that $p_1$ and $p_2$ are not neighbouring in $\mathcal{P}$. Indeed, if neither of them is contained in $J_0 = [x^{0,+},y^{0,-}]$, then the boundary points of $J_0$ are between them. If only $p_1$ is contained in $J_0$, then $y^{0,-}$ is between them, since it is contained in $(p_1, p_2)$. If only $p_2$ is in $J_0$, then $x^{0,+}$ is between them. Finally, if they are both contained in $J_0$, then by (b) they are not contained in the same interval $J_j^0$, so they are not neighbouring in this case either. Thus the pair $q_1,q_2$ satisfies (c) as well. 

Thus, if the points $p_1, p_2$ satisfy (a), (b) and (c), but not (d), we can always find their pullback to another pair of points $q_1,q_2$ that also satisfies (a), (b) and (c). If the pair $q_1,q_2$ does not satisfy (d), then we can pull back this pair as well. If this can be performed indefinitely, then some pair of points $p_1$, $p_2$ appears twice. Then the interval $G = [p_1,p_2]$ is periodic, as the gap interval between the pullback of points maps continuously over the gap interval between the points themselves. Moreover, no point in $[p_1,p_2]$ lands on a discontinuity. Thus $p_1$ and $p_2$ are contained in $X$ and have the same itineraries. This is only possible if they are both contained in a single iterate of the form $T^k(J_j^0)$ or $T^k(J_i)$, as points from different iterates of this kind have different itineraries. This contradicts (b), so the pullback cannot be performed indefinitely. Thus, by pulling back, we arrive at a pair of points that also satisfies (d).
\end{proof}

Let $\mathcal{P}^m_s := I^m_s \cap \mathcal{P}$ and $G^{\circ} := G \setminus \{p_1, p_2\}$. Define $\frac{p_1+p_2}{2}$ as the average of geometric points in $I$ corresponding to $p_1$ and $p_2$, so that $\frac{p_1+p_2}{2}$ is the midpoint of $G$. Let $I^l$ be the interval in $I$ to the left of $\frac{p_1+p_2}{2}$ and let $I^r$ be the interval in $I$ to the right of $\frac{p_1+p_2}{2}$.

\begin{lemma}[Left or right]
\label{lem:l-or-r}
Let $\mathcal{I}^m$ be an $\mathcal{A}$-subordinate partition as in Lemma \ref{lem:dist-pair}, and let $G = [p_1, p_2]$ be a gap interval. Then for any element $I^m_s$ of this partition, the set $T(\mathcal{P}_s^m)$ is either fully to the left or fully to the right of $\frac{p_1+p_2}{2}$. More precisely, either $T(\mathcal{P}_s^m) \cap I^l = \emptyset$ or $T(\mathcal{P}_s^m) \cap I^r = \emptyset$.
\end{lemma}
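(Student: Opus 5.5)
Argue by contradiction. Suppose some element $I^m_s$ of $\mathcal{I}^m$ has distinguished points $q, q' \in \mathcal{P}^m_s$ with $T(q) \in I^l$ and $T(q') \in I^r$. Since $\mathcal{I}^m$ refines $\mathcal{I}^0$, $I^m_s$ lies inside a single continuity interval $I_t$ of $T$. Hence $T$ restricted to $I^m_s$ is one translation, and $T$ maps the whole segment $[q,q']$ (or $[q',q]$) isometrically and order-preservingly onto the segment between $T(q)$ and $T(q')$. The strategy is to produce, inside $I^m_s$, a neighbouring pair of distinguished points whose gap is at least as long as $G$ and which still satisfies (a) and (b) of Lemma \ref{lem:dist-pair}. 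Then we contradict either the maximality (c) or the pullback property (d).

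First step. Order $\mathcal{P}^m_s \cap [q,q']$ from left to right and push the points forward by $T$. Their images are ordered the same way, some lie in $I^l$ and some in $I^r$, and $T$ is continuous on this segment. So there are consecutive distinguished points $u_1 < u_2$ in $I^m_s$ with $T(u_1) \in I^l$ and $T(u_2) \in I^r$. The image $T([u_1,u_2])$ then contains the midpoint $\frac{p_1+p_2}{2}$, together with everything strictly between $T(u_1)$ and $T(u_2)$.

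Second step: compare this image with $G$. There are two cases.
\begin{itemize}
\item If $T(u_1) = p_1$ and $T(u_2) = p_2$, then $u_1, u_2$ are $T$-preimages of $p_1, p_2$ in $\mathcal{P}$ lying in one partition element. This contradicts (d).
\item Otherwise the segment $[T(u_1), T(u_2)]$ strictly contains some point not in $G$, or we still have $|u_2 - u_1| \ge |G|$ with strict inequality in length, unless an endpoint differs.
\end{itemize}
The subtle point in the second case is that $T(u_1)$ and $T(u_2)$ need not be distinguished points. A distinguished point maps to a distinguished point except when it is some $v_{last}$. So the length comparison must be made in the domain: $|u_2-u_1| = |T(u_2)-T(u_1)|$. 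Because $p_1, p_2$ are neighbouring in $\mathcal{P}$, $T(u_1) \le p_1$ and $T(u_2) \ge p_2$ whenever both images are distinguished and lie in the same element as $G$. This gives $|u_2-u_1| \ge |G|$, with equality only in the excluded first case. For signed points we work with the signed order: touching pairs have length $0$ and cannot straddle the midpoint of a non-touching gap.

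Third step. Check that $(I^m_s, u_1, u_2)$ satisfies (a) and (b). Condition (a) is immediate, since the points are neighbouring by construction, and they are not touching because their images are separated by the midpoint of $G$. For (b), if $u_1, u_2$ were in a common iterate $T^k(J^0_j)$ or $T^k(J_i)$, then $T(u_1), T(u_2)$ would lie in a common $T^{k+1}$-iterate, or in the return image inside $J_0$. Using the argument from the proof of Lemma \ref{lem:dist-pair}, this would force $p_1, p_2$ to violate (b) or neighbouring-ness. With (a) and (b) in hand, $|u_2-u_1| > |G|$ contradicts (c).

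\textbf{Main obstacle.} The hardest part is the case where one of $T(u_1), T(u_2)$ is not distinguished, namely when $u_i = v_{last}$, so that $T(u_i)$ is a discontinuity or a point of $J_0$. I would treat this as in the proof of Lemma \ref{lem:dist-pair}:
\begin{itemize}
\item a discontinuity lying strictly inside the image segment would separate partition elements, which I would rule out using the fact that $p_1, p_2$ lie in the same element $I^m_s$ of $\mathcal{I}^m$;
\item an entry into $J_0$ would put a boundary point of $J_0$, or a point of $\mathcal{P}$, between $p_1$ and $p_2$, contradicting that they are neighbouring.
\end{itemize}
If these cases do not close cleanly, I would fall back on the weaker length comparison: the image segment straddles the midpoint, so it has length at least $|G|$ unless it sits inside $G^{\circ}$. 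That would require both $T(u_i) \in G^{\circ}$, which forces them to be non-distinguished points inside $G$, which I would again rule out via the $v_{last}$ analysis.
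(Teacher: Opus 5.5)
Your route is the paper's. You assume some $\mathcal{P}^m_s$ straddles the midpoint and take neighbouring $u_1<u_2$ in $I^m_s$ whose images straddle. You then use the $v_{last}$ analysis to get $T(u_1)\le p_1$ and $T(u_2)\ge p_2$: a non-distinguished image is either a discontinuity, impossible in $G^{\circ}$ because $p_1,p_2$ lie in one element of a refinement of $\mathcal{I}^0$, or a point of $J_0$, impossible because $p_1,p_2$ are neighbouring. Finally you invoke (d) for an exact match and (c) otherwise. Your explicit check of (b) for $(u_1,u_2)$ is something the paper leaves implicit.

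There is, however, one case you do not cover, and the paper treats it separately. The maximality in (c) ranges only over pairs of \emph{signed} distinguished points. At this stage $\mathcal{P}$ still contains the unsigned points $T^k(a^0_j)$, with $1\le j\le N_0-1$ and $0\le k<l^0_j$, coming from the first landing orbits. If $u_1$ or $u_2$ is such a point, then $(I^m_s,u_1,u_2)$ is not an admissible triple, and $|u_2-u_1|>|G|$ contradicts nothing. Your (b)-argument closes this case, but it must produce the contradiction directly rather than certify (b) for $(u_1,u_2)$. Suppose $u_1=T^k(a^0_j)$. Then $u_1$ is the left endpoint of $T^k(J^0_{j+1})$, whose right endpoint is also in $\mathcal{P}$, so the neighbour $u_2$ lies in the same iterate. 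Hence $G\subset[T(u_1),T(u_2)]\subset T^{k+1}(J^0_{j+1})$ with $k+1\le l^0_j<r^0_{j+1}$, which violates (b) for $p_1,p_2$. The case where $u_2$ is unsigned is symmetric.

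Two smaller points.
\begin{itemize}
\item Your fallback ``the image has length at least $|G|$ unless it sits inside $G^{\circ}$'' is false. A single endpoint $T(u_i)\in G^{\circ}$ already allows $|u_2-u_1|<|G|$, so each of $T(u_1)$ and $T(u_2)$ must be excluded from $G^{\circ}$ separately, as your primary $v_{last}$ analysis does. What matters there is a discontinuity or a point of $J_0$ at $T(u_i)$ inside $G^{\circ}$; a discontinuity inside the image segment is harmless.
\item Your claim of ``equality only in the excluded first case'' overlooks that $T(u_1)$ can coincide with $p_1$ geometrically but carry the opposite sign, e.g.\ $T(u_1)=\beta^-$ and $p_1=\beta^+$. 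Then the lengths tie and neither (c) nor (d) is violated. The paper's proof passes over this configuration too, so a complete argument needs either a tie-breaking choice of $G$ or a separate treatment of this case.
\end{itemize}
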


\begin{proof}
Assume the contrary, that there exists an interval $I^m_s$ for which $T(\mathcal{P}_s^m)$ intersects both $I^l$ and $I^r$. Let $q_1, q_2 \in \mathcal{P}_s^m$ be two neighbouring (not necessarily signed) points in $I^m_s$ such that their images are contained in $\mathcal{P}$, and $q_1$ gets mapped to $I^l$ and $q_2$ gets mapped to $I^r$. Assume first that the image $T(q_1)$ is contained in $G^{\circ}$, with the case for $T(q_2)$ being analogous. Then $T(q_1)$ is not a point in $\mathcal{P}$, since $p_1$ and $p_2$ are neighbouring. This is only possible if $q_1 = v_{last}$, where $v$ is either a critical connection vector corresponding to landing on a point in $\mathcal{C}_2$ or a return vector. The second case was already shown to be impossible in the proof of Lemma \ref{lem:dist-pair}, while the first case is impossible since $p_1$ and $p_2$ are by assumption contained in a single interval of the partition $\mathcal{I}^m$. Thus, neither of the points $T(q_1), T(q_2)$ is contained in $G^{\circ}$. As this pair gets mapped forward continuously, this means that the image of the gap between these intervals is at least as large as $G$. By property (d) of Lemma \ref{lem:dist-pair}, it is impossible that $q_1$ and $q_2$ get mapped over $p_1$ and $p_2$, respectively. If $q_1$ and $q_2$ are both signed, then this is a contradiction with the maximality of $G$. We may, without loss of generality, assume that $q_1$ is not signed, with the other case being analogous. This means that $q_1 \in P(v)$ for a first landing vector $v$, so it is contained in the left boundary of an iterate of the form $T^k(J^0_j)$. As there is no other points in $\mathcal{P}$ contained in the interior $T^k(J^0_j)$, this means that $p_1$ and $p_2$ are both contained in $T^{k+1}(J^0_j)$, which is a contradiction.
\end{proof}

We now show that we can get an $\mathcal{A}$-subordinate partition of level $m+1$ if we split the interval $I^m_{s'}$ containing the pair $p_1,p_2$ along the gap interval $G$.

\begin{proposition}[Refining a partition]
\label{prop:refine} 
Let $\mathcal{I}^m$ be an $\mathcal{A}$-subordinate partition of level $m$ and assume that there exists at least one interval $I_{s'}^m$ of the partition as in Lemma \ref{lem:dist-pair}. Then the partition $\mathcal{I}^{m+1}$ obtained from $\mathcal{I}^m$ by splitting $I^m_{s'}$ along $\frac{p_1+p_2}{2}$ is an $\mathcal{A}$-subordinate partition of level $m+1$.
\end{proposition}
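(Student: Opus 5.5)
Since $I^{m+1}_{s'}$'s two halves sum to $I^m_{s'}$, which is $\mathcal{A}$-subordinate, it suffices to show that the left half $I^{l}\cap I^m_{s'}$ is $\mathcal{A}$-subordinate; the right half then follows by subtraction. All other elements of $\mathcal{I}^{m+1}$ are unchanged from $\mathcal{I}^m$. It is convenient to work with the whole left part $I^l$, writing it as the union of elements of $\mathcal{I}^m$ lying entirely to the left of $I^m_{s'}$ together with $I^l\cap I^m_{s'}$. Since the former are $\mathcal{A}$-subordinate, it is equivalent to prove
\[
\sum_{v\in\mathcal{V}}\alpha(v)N(I^l,v)=0 .
\]

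\textbf{Step 1: counting via the dynamics.} Let $L$ be the set of indices $s$ with $T(\mathcal{P}^m_s)\subset I^l$; by Lemma \ref{lem:l-or-r} every element goes entirely left or entirely right. For each $v\in\mathcal{V}$, the map $T$ sends $P(v)\setminus\{v_{last}\}$ bijectively onto $P(v)\setminus\{v_{first}\}$. Hence
\[
N(I^l,v)=\sum_{s\in L}N(I^m_s,v)-[\,T(v_{last})\in I^l\,]+[\,v_{first}\in I^l\,],
\]
where $T(v_{last})$ is interpreted as the endpoint of the orbit, namely the landed discontinuity or the landing point in $J_0$. For this I must check that $T(v_{last})$ lies on the same side as the other images from its element. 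This holds because, as in the proof of Lemma \ref{lem:l-or-r}, no such endpoint can lie in $G^\circ$. Multiplying by $\alpha(v)$, summing, and using subordinacy of $\mathcal{I}^m$ reduces the claim to the error identity
\[
\sum_{v}\alpha(v)\bigl([v_{first}\in I^l]-[T(v_{last})\in I^l]\bigr)=0 .
\]

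\textbf{Step 2: the error term (main obstacle).} I will group the vectors into chains. For each $a_j$, the chain is the first landing vector $L_j$, then the critical connection vectors $C^{\pm}(j,k)$, then the return vector $R^{\pm}_j$. For a maximal periodic interval $J_i$, the chain is the closed cycle of the $C^{i,\pm}$. Within a chain, $T(v_{last})$ of one vector is $v_{first}$ of the next as geometric points, so the errors telescope provided consecutive coefficients combine correctly. They do not combine directly, so I will use the $\bm f$-coordinates of \eqref{eq:lin-dep}. For each discontinuity index $t$, the $\bm f_t$-coefficient gives a linear relation: the sum of $\alpha$ over vectors starting at $\beta_t$ equals the sum over vectors ending at $\beta_t$ (with the sign conventions of $L$, $C$, $R$). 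Since $\beta_t^\pm$ lie on the same side of the midpoint (the midpoint lies in $G^\circ$, which contains no discontinuity), every discontinuity contributes a net zero to the error sum.

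The remaining contributions come from three sources. First, the starting points $a_j$ of the first landing vectors. Second, the return points $T(R_{j,last})\in J_0$. Third, the boundary vectors of $J_0$ and vectors in $\mathcal{C}_{\neg X}$ ending at periodic-free points. I must show these cancel. The key is that the $a_j$ and the return images $R_J(a_j^\pm)$ are boundary points of the continuity intervals $J_j$ and of the return intervals $R_J(J_j)$. Because $R_J$ is a bijection of $J_0$ (Lemma \ref{lem:rj-facts}), both collections partition $J_0$. Moreover, by condition (b) of Lemma \ref{lem:dist-pair} and neighbouring-ness, the midpoint is either outside $J_0$ or at a point where the two partitions cut consistently. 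So I would compare $\sum\alpha\,[a_j\in I^l]$ with $\sum\alpha\,[R_J(a_j^\pm)\in I^l]$ using the $\bm f$-relations at the landed discontinuity $\beta(j)$, which tie $\alpha_j$ to the outgoing coefficients. This is where the restriction to a single $J_0$ together with maximal periodic $J_i$ is essential: the closed cycles of the $J_i$ contribute zero automatically, since their chains return to their own starting discontinuity. I expect this bookkeeping, especially handling the case where the midpoint falls inside $J_0$, to be the main difficulty. For that case I would try to rule it out using (b) and the fact that neighbouring distinguished points inside $J_0$ must then bound a single $J_j$ or its image.
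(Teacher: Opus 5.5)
Your reduction to one half, your Step 1, and the part of Step 2 that cancels every contribution at a discontinuity using the $\bm f$-coordinates of \eqref{eq:lin-dep} all coincide with the paper's argument; the cancellation works because $\beta^+$ and $\beta^-$ lie on the same side, as $G^\circ$ contains no discontinuity. The gap is the leftover: the starting points $a_j\in J_0$ of the first landing vectors, and the landing points $R_J(a_j^\pm)\in J_0$ of the return vectors. Your third source is empty. Each $C_\beta$ with $\beta\in\mathcal C_{\neg X}\subset\mathcal C_1$ starts and ends at discontinuities, and $L_0,L_{N_0},R^{0,+}_0,R^{0,-}_{N_0}$ already belong to the first two groups.

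Your plan cannot handle a midpoint inside $J_0$ by matching the partition of $J_0$ by the $a_j$ against the partition by return points via the $\bm f$-relations at the $\beta(j)$. In that case the conclusion is actually false. Take the dependency expressing that $J^0_j$ has one itinerary from both ends: coefficient $1$ on $L_{j-1},C^{0,+}(j-1,\cdot),R^{0,+}_{j-1}$ and $-1$ on $L_j,C^{0,-}(j,\cdot),R^{0,-}_j$, with all others $0$. A cut through the interior of $J^0_j$ puts $a_{j-1}$ on the left and $a_j$ on the right. Every later iterate $T^k(J^0_j)$ with $1\le k<r_j$ is disjoint from $J_0$ and so lies entirely on one side. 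The weighted count of the left half is therefore $1\neq 0$. This case must be excluded, not absorbed by bookkeeping.

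The missing observation is already in the proof of Lemma \ref{lem:dist-pair}: $(p_1,p_2)\cap J_0=\emptyset$. This holds because the endpoints of $J_0$ and the points $a_j$ (or $a_j^\pm$ when $a_j$ is a discontinuity) are distinguished, and (b) forbids $p_1,p_2$ from lying in a common $J^0_j$. Hence $J_0$ lies entirely in $I^l$ or entirely in $I^r$. Since you only need one half, run Steps 1--2 on the half not containing $J_0$; the paper assumes $J_0\subset I^r$ and computes for $I^l$. On that side neither any $a_j$ nor any return point $R_J(a_j^\pm)$ occurs. The error sum then consists only of discontinuity terms, which your $\bm f$-argument already kills, and the other half follows by subtraction from $I^m_{s'}$.

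If you insist on the half containing $J_0$, the leftover is $\sum_j\alpha^0_j-\sum_{j,\pm}\alpha^{0,\pm}_j$. It does vanish, but only by the global identity obtained from summing all $\bm f$-coordinates of \eqref{eq:lin-dep}, where each critical connection vector contributes $+\alpha-\alpha=0$. It does not follow from the termwise matching you describe.
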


\begin{proof}
Let $I^{m,l}_{s'}$ be the maximal (half-open) interval to the left of $\frac{p_1+p_2}{2}$ inside $I^m_{s'}$, and let $I^{m,r}_{s'}$ be the maximal (half-open) interval to the right of $\frac{p_1+p_2}{2}$ inside $I^m_{s'}$. Then the intervals of the partition $\mathcal{I}^{m+1} = \bigcup_{1 \le s \le r+m+1} I^{m+1}_s$ are defined as:

\begin{equation*}
I^{m+1}_s := \begin{cases}
    I^m_s,& \text{For $s < s'$} \\[2pt]
    I^{m,l}_{s'},& \text{For $s = s'$} \\[2pt]
    I^{m,r}_{s'},& \text{For $s = s'+1$} \\[2pt]
    I^m_{s-1},& \text{For $s > s'+1$.}
\end{cases}
\end{equation*}
By construction, $\mathcal{I}^{m+1}$ is a refinement of $\mathcal{I}^0$. As only one of the intervals was modified, we know that:

\begin{equation}
\label{eq:m+1=m}
\begin{split}
\sum_{v \in \mathcal{V}} &\alpha(v) N(I^{m+1}_{s},v) = \sum_{v \in \mathcal{V}} \alpha(v) N(I^{m}_{s},v) = 0 \text{ for } s < s'; \\
\sum_{v \in \mathcal{V}} &\alpha(v) N(I^{m+1}_{s},v) = \sum_{v \in \mathcal{V}} \alpha(v) N(I^{m}_{s-1},v) = 0 \text{ for } s > s' + 1.
\end{split}
\end{equation}
Thus, in order to show that $\mathcal{I}^{m+1}$ is an $\mathcal{A}$-subordinate partition, we only need to prove:

\begin{equation}
\label{eq:l,r-eq}
\begin{split}
\sum_{v \in \mathcal{V}} &\alpha(v) N(I^{m+1}_{s'},v) = 0 \\
\sum_{v \in \mathcal{V}} &\alpha(v) N(I^{m+1}_{s'+1},v) = 0.
\end{split}
\end{equation}
It is enough to only prove one of the above equations holds, as the other follows from:

\begin{align*}
&\sum_{v \in \mathcal{V}} \alpha(v) N(I^{m+1}_{s'},v) + \sum_{v \in \mathcal{V}} \alpha(v) N(I^{m+1}_{s'+1},v) \\
= &\sum_{v \in \mathcal{V}} \alpha(v) N(I^{m}_{s'},v) = 0.
\end{align*}
Because of this, we may, without loss of generality, assume that the interval $J_0$ is contained in $I^r$, because we can otherwise just do the computation that follows for $I^r$ instead of $I^l$. This assumption will be important when we compute $\Delta_v$ (defined in \eqref{eq:n}) for  first landing and return vectors. Let $\mathcal{P}^l := \mathcal{P} \cap I^l$ be the set of all distinguished points to the left of the midpoint of $G$. Let Ind$^l$ be the set of all $s \in \{ 1, \dots, m+r \}$ such that $T(\mathcal{P}_s^m) \subset I^l$. This definition makes sense because of Lemma \ref{lem:l-or-r}. Let $\mathcal{P}^{l,-1} := \bigcup_{s \in \text{Ind}^l} \mathcal{P}^m_s = T^{-1}(\mathcal{P}^l) \cap \mathcal{P}$ be the set of all distinguished points that $T$ maps into $\mathcal{P}^l$. Consider the following equation:

\begin{equation}
\label{eq:n}
\Delta_v := \sum_{s \in \text{Ind}^l} N(I^{m}_s, v) - \sum_{s \le s'} N(I^{m+1}_s, v).
\end{equation}
This equation states that the number of elements of $P(v)$ contained in $I^l$ equals the number of elements of $P(v)$ that get mapped under $T$ into $I^l$ up to an error term $\Delta_v$. From \eqref{eq:n}, we get:

\begin{align*}
&\sum_{v \in \mathcal{V}} \alpha(v) N(I^{m+1}_{s'},v) \\
= &\sum_{v \in \mathcal{V}} \alpha(v) \left( \sum_{s \in \text{Ind}_L} N(I^m_s, v) - \sum_{s < s'} N(I^{m+1}_s, v) - \Delta_v \right) \\
= &\sum_{s \in \text{Ind}_L} \left( \sum_{v \in \mathcal{V}} \alpha(v) N(I^m_s, v) \right) \\
&- \sum_{s < s'} \left( \sum_{v \in \mathcal{V}} \alpha(v) N(I^{m+1}_s, v) \right) - \sum_{v \in \mathcal{V}} \alpha(v) \Delta_v \\
= &-\sum_{v \in \mathcal{V}} \alpha(v) \Delta_v.
\end{align*}
where the sum in the third line is zero because of the inductive assumption that each interval $I^m_s$ is $\mathcal{A}$-subordinate, and the sum in the fourth line is zero because of equation \eqref{eq:m+1=m}. Thus we only need to prove that:

\begin{equation}
\label{eq:delta-v}
\sum_{v \in \mathcal{V}} \alpha(v) \Delta_v = 0.
\end{equation}
We now compute the error term $\Delta_v$ for all vectors $v \in \mathcal{V}$. For this purpose, we define the following sets:

\begin{align*}
P(v)^{l,-1} &:= P(v) \bigcap \left( \bigcup_{s \in \text{Ind}^l} I^{m}_s \right)\\
P(v)^l &:= P(v) \bigcap \left( \bigcup_{s \le s'} I^{m}_s \right).
\end{align*}
By definition, the following holds:
\begin{align*}
|P(v)^{l,-1}| &= \sum_{s \in \text{Ind}^l} N(I^{m}_s, v) \\
|P(v)^l| &= \sum_{s \le s'} N(I^{m+1}_s, v).
\end{align*}
We first compute $\Delta_v$ for a critical connection vector $v \in \mathcal{V}$. Note that this includes the vectors in $\mathcal{C}_{\neg X}$. For any point $p$ in $P(v)^l$ that is not equal to $v_{first}$ (we do not assume it is contained in $P(v)^l$), there exists exactly one point $q$ in $P(v)^{l,-1}$ such that $T(q) = p$. Indeed, this is because each element of $P(v)$ except the first one has a preimage in $P(v)$. Moreover, $T(q) \in P(v)^l$ for each point $q$ in $P(v)^{l,-1}$ that is not equal to $v_{last}$. Thus we get the following formula for $\Delta_v$ for a critical connection vector $v \in \mathcal{V}$:

\begin{equation}
\label{eq:delta-v-crit}
\Delta_v = \begin{cases}
    0,& \text{If $v_{first} \in \mathcal{P}^l$ and $v_{last} \in \mathcal{P}^{l,-1}$} \\[2pt]
    -1,& \text{If $v_{first} \in \mathcal{P}^l$ and $v_{last} \notin \mathcal{P}^{l,-1}$} \\[2pt]
    1,& \text{If $v_{first} \notin \mathcal{P}^l$ and $v_{last} \in \mathcal{P}^{l,-1}$} \\[2pt]
    0,& \text{If $v_{first} \notin \mathcal{P}^l$ and $v_{last} \notin \mathcal{P}^{l,-1}$.}
\end{cases}
\end{equation}
Now we compute $\Delta_v$ for a return vector $v \in \mathcal{V}$. For every point $q$ in $P(v)^l$ that is not equal to $v_{first}$, we again have that there again exists exactly one point $q$ in $P(v)^{l,-1}$ such that $T(q) = p$. Moreover, $T(q) \in P(v)^l$ for each point $q$ in $P(v)^{l,-1}$ that is not equal to $v_{last}$. Recall that we have assumed that $J_0$ is contained in $I^r$. This means that $T(v_{last}) \notin \mathcal{P}^l$, and so $v_{last} \notin \mathcal{P}^{l,-1}$. Thus $\Delta_v$ in this case only depends on whether $v_{first} \in \mathcal{P}^l$, so we get the following formula:

\begin{equation}
\label{eq:delta-v-return}
\Delta_v = \begin{cases}
    -1,& \text{If } v_{first} \in \mathcal{P}^l \\[2pt]
    0,& \text{If } v_{first} \notin \mathcal{P}^l.
\end{cases}
\end{equation}
Analogously, for a first landing vector $v$, we know that $v_{first} \notin \mathcal{P}^l$, so the value of $\Delta_v$ depends only on whether $v_{last} \in \mathcal{P}^{l,-1}$:
\begin{equation}
\label{eq:delta-v-landing}
\Delta_v = \begin{cases}
    1,& \text{If $v_{last} \in \mathcal{P}^{l,-1}$} \\[2pt]
    0,& \text{If $v_{last} \notin \mathcal{P}^{l,-1}$}. \\[2pt]
\end{cases}
\end{equation}
Let $w_{\beta}$, for a (signed or geometric) discontinuity $\beta \in \mathcal{P}$, be the unique vector in $\mathcal{V}$ such that $w_{first} = \beta$, and let $w_{\beta} = 0, \alpha(w_{\beta}) = 0$ for all $\beta \notin \mathcal{P}$. We claim that the sum in \eqref{eq:delta-v} is equal to:

\begin{equation}
\label{eq:beta-L}
\sum_{\beta \in I^l} \left( - \alpha(w_{\beta^+}) - \alpha(w_{\beta^-}) + \sum_{\substack{w \in \mathcal{V}, \\ {T(w_{last}) \in \{ \beta, \beta^+, \beta^-\}}}} \alpha(w) \right).
\end{equation}
Here, we sum over all geometric discontinuities $\beta \in I^l$. Note for every $v \in \mathcal{V}$, the coefficient $\alpha(v)$ is equal to at most one $\alpha(w_{\beta^{\pm}})$ in \eqref{eq:beta-L}. Such a $\alpha(w_{\beta})$ exists if and only if $v_{first}$ is a signed discontinuity in $I^l$. Moreover, $\alpha(v)$ is also equal to at most one $\alpha(w)$ in the inner sum of \eqref{eq:beta-L}, and such $\alpha(w)$ exists if and only if $T(v_{last})$ is a (signed or geometric) discontinuity in $I^l$. We now prove that the following two claims hold:

\begin{enumerate}
    \item $\Delta_v = -1$ in \eqref{eq:delta-v} if and only if $\alpha(v)$ is equal to some $\alpha(w_{\beta})$ and not equal to any $\alpha(w)$ in \eqref{eq:beta-L};
    \item$\Delta_v = 1$ in \eqref{eq:delta-v} if and only if $\alpha(v)$ is not equal to any $\alpha(w_{\beta})$ and equal to some $\alpha(w)$ in \eqref{eq:beta-L}.
\end{enumerate} 
The equality of \eqref{eq:delta-v} and \eqref{eq:beta-L} clearly follows from these two claims, because then the contribution of $\alpha(v)$ to equations \eqref{eq:delta-v} and \eqref{eq:beta-L} is equal for all vectors $v \in \mathcal{V}$. We prove the first claim, while the proof of the second one is analogous.

If $\Delta_v = -1$, then $v_{first} \in \mathcal{P}^l$ and $v_{last} \notin \mathcal{P}^{l,-1}$. In particular, $v$ is not a first landing vector, by \eqref{eq:delta-v-return}. Thus $v_{first}$ is a signed discontinuity in $I^l$, while $T(v_{last})$ is not a discontinuity in $I^l$, so $\alpha(v)$ is equal to some $\alpha(w_{\beta})$ and not equal to any $\alpha(w)$. Conversely if $\alpha(v)$ is equal to some $\alpha(w_{\beta})$ and not equal any $\alpha(w)$ then the $v_{first}$ is a signed discontinuity in $I^l$, so $v_{first} \in \mathcal{P}^l$, and $T(v_{last})$ is not a discontinuity in $I^l$, so $v_{last} \notin \mathcal{P}^{l,-1}$. Thus $\Delta_v = -1$, so the claim follows.

We now show that the sum in \eqref{eq:beta-L} is equal to zero, which finishes the proof. Indeed, the $\bm{f}$-coefficient at ind$(\beta)$ for some $\beta \in I^l$ for the resulting vector in \eqref{eq:lin-dep} has to be zero. The only vectors in this sum for which this $\bm{f}$-coefficient is non-zero are those vectors $v$ for which $v_{first} = \beta^{\pm}$ or those for which $T(v_{last}) \in \{ \beta, \beta^+, \beta^- \}$. Thus taking the $\bm{f}$-coefficient at ind$(\beta)$ in equation \eqref{eq:lin-dep}, we get that:

\[
\left( - \alpha(w_{\beta^+}) - \alpha(w_{\beta^-}) + \sum_{\substack{w \in \mathcal{V}, \\ {T(w_{last}) \in \{ \beta, \beta^+, \beta^- \}}}} \alpha(w) \right) = 0
\]
As we split along an interval $G^{\circ}$ that does not contain a discontinuity in its interior, $\beta^+ \in I^l$ if and only if $\beta^- \in I^l$. Thus we have that:
\[
\sum_{\beta \in I^l} \left( - \alpha(w_{\beta^+}) - \alpha(w_{\beta^-}) + \sum_{\substack{w \in \mathcal{V}, \\ {T(w_{last}) \in \{ \beta, \beta^+, \beta^-\}}}} \alpha(w) \right) = 0,
\]
which finishes the proof.
\end{proof}

Let us now show that Proposition \ref{prop:refine} can be applied only finitely many times.

\begin{lemma}
\label{lem:finite-refine}
We can apply Proposition \ref{prop:refine} finitely many times if we start with the initial partition $\mathcal{I}^0$ given by the continuity intervals.
\end{lemma}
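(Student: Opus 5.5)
The point is that $\mathcal{P}$ is a finite set of distinguished points. Each vector $v \in \mathcal{V}$ corresponds to a finite orbit segment, so $P(v)$ is finite, and $\mathcal{V}$ itself is finite. Every application of Proposition \ref{prop:refine} splits some element $I^m_{s'}$ of the current partition at the midpoint of a gap $G=[p_1,p_2]$, where $p_1,p_2 \in \mathcal{P}\cap I^m_{s'}$ are distinct and not touching. I will show that the number of refinements is bounded in terms of $|\mathcal{P}|$ alone.

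The key observation is that after a split, both new intervals $I^{m,l}_{s'}$ and $I^{m,r}_{s'}$ contain at least one distinguished point: $p_1 \in I^{m,l}_{s'}$ and $p_2 \in I^{m,r}_{s'}$. The cut point $\frac{p_1+p_2}{2}$ lies strictly inside $G^\circ$, because $p_1,p_2$ are not touching and hence correspond to distinct geometric points. The intervals $I^0_s = I_s$ of the initial partition may contain no distinguished points, but I never split such intervals, since a split requires two points of $\mathcal{P}$ in the interval.

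Consider the potential function
\[
\Phi(\mathcal{I}^m) := \#\{ s : I^m_s \cap \mathcal{P} \neq \emptyset\}.
\]
Each application of Proposition \ref{prop:refine} replaces one interval containing at least two distinguished points by two intervals, each containing at least one. It leaves all other intervals unchanged. Hence $\Phi$ increases by exactly one at each step. On the other hand, the elements of the partition are pairwise disjoint, so $\Phi \le |\mathcal{P}|$. Here signed points $x^+$ and $x^-$ count as separate elements, but this only makes the bound larger while keeping it finite. Therefore at most $|\mathcal{P}| - 1$ refinements can be performed, starting from $\mathcal{I}^0$. Equivalently, the total number of intervals never exceeds $r + |\mathcal{P}|$.

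The only step needing care is checking that the midpoint split really separates $p_1$ from $p_2$ in the signed-point convention. If $p_1 = z^-$ and $p_2 = z^+$, the pair touches and is excluded by (a) of Lemma \ref{lem:dist-pair}. Otherwise the geometric points underlying $p_1$ and $p_2$ differ, so the midpoint lies strictly between them and the half-open intervals $I^{m,l}_{s'}$ and $I^{m,r}_{s'}$ contain $p_1$ and $p_2$ respectively. With this checked, the counting argument above gives the finiteness, and the maximal level $M$ of Definition \ref{def:max-part} is well defined.
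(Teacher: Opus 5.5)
Your proof is correct and follows the paper's own argument: the paper also tracks the number of partition elements meeting the finite set $\mathcal{P}$, which strictly increases with each split and so bounds the number of applications. Your extra checks make explicit what the paper leaves implicit, namely that non-touching signed points sit at distinct geometric positions so the midpoint separates $p_1$ from $p_2$, and that at most $|\mathcal{P}|-1$ refinements can occur.
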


\begin{proof}
We will show that applying Proposition \ref{prop:refine} always increases the number of elements of the partition that have a non-empty intersection with $\mathcal{P}$. Since $\mathcal{P}$ is a finite set, this clearly shows that Proposition \ref{prop:refine} can only be applied finitely many times. By construction, we can apply Proposition \ref{prop:refine} to a partition $\mathcal{I}^m$ if and only if we find a pair of distinguished points $p_1,p_2$ as in Lemma \ref{lem:dist-pair}. These points are contained in the same element $I^m_{s'}$ of the partition, so after splitting the partition along the gap interval $G$ between them, we get two intervals $I^{m+1}_{s'}$ and $I^{m+1}_{s'}$ that contains $p_1$ and $p_2$, respectively. Thus we have increased the number of elements of the partition that have a non-empty intersection with $\mathcal{P}$, so the proof follows.
\end{proof}
 
Thus this refinement procedure stops at some maximal level:

\begin{definition}[Maximal partition]
\label{def:max-part}
Let $M$ be the level at which the procedure of inductively applying Proposition \ref{prop:refine} stops. We call the partition $\mathcal{I}^M$ obtained by this procedure the \textit{maximal partition}.
\end{definition}
It is of maximal level in the sense that for the partition $\mathcal{I}^M$ of this level, there is no pair of points satisfying (a) and (b) from Lemma \ref{lem:dist-pair}. Recall that $\mathcal{C}_{\neg X}$ is the set of all discontinuities in $\mathcal{C}_1$ not contained in $X$. The maximality of $\mathcal{I}^M$ gives the following corollary:

\begin{corollary}
\label{cor:alpha_x=0}
All of the coefficient $\alpha(v_{\beta})$, for $\beta \in \mathcal{C}_{\neg X}$, are equal to zero.
\end{corollary}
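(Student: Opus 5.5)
Here $v_\beta$ denotes the general critical connection vector $C_\beta$, and the argument works in the maximal partition $\mathcal{I}^M$ of Definition \ref{def:max-part}. No pair of distinguished points satisfies (a) and (b) of Lemma \ref{lem:dist-pair} there. The plan is to show that every point of $P(C_\beta)$, for $\beta \in \mathcal{C}_{\neg X}$, is essentially isolated in its partition element. Then the $\mathcal{A}$-subordinate equation for that element reads $\alpha(C_\beta) \cdot 1 + (\text{few other terms}) = 0$, and I would peel off coefficients one at a time.

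\textbf{Step 1: isolation.} Let $p \in P(C_\beta)$ and let $I^M_s$ be the element containing it. Any distinguished point $p' \ne p$ in $I^M_s$ adjacent to $p$ must violate (a) or (b). Violating (a) means $p' \sim p$, that is, they touch. Violating (b) means $p, p'$ lie in a common $T^k(J^0_j)$ or $T^k(J_i)$. The latter is impossible for points of the orbit of $\beta$ before its landing, because $\beta \notin X$: if such an orbit point lay in $T^k(J^0_j) \subset X$, then its forward orbit stays in $X$. To turn this into a contradiction I would use the choice of $\beta$. Either one pulls back along the orbit to $\beta$ itself and uses the first iterate that enters $X$, or one argues with the distinguished point just adjacent to $p$: orbit points of $\beta$ that are not in $X$ cannot share a $T^k(J_j)$ with anything. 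The first iterates that lie in $X$ need separate care, and I would handle them by noting that the last point of $P(C_\beta)$ maps onto a discontinuity and may lie in $X$. So within its element, every point of $P(C_\beta)$ outside $X$ has only touching neighbours.

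\textbf{Step 2: pick a good point.} I take $p = \beta$ itself, the first point of $P(C_\beta)$, which lies outside $X$. Its element $I^M_s$ then contains only $\beta$ and possibly the touching point $\beta^\mp$, the other signed half of the same geometric discontinuity. Because the refinement is built on $\mathcal{I}^0$ and $\beta$ is a discontinuity, $\beta^+$ and $\beta^-$ lie in different elements. Since $\beta^\mp$ is touching, not equal, it therefore sits in the neighbouring element, so $I^M_s \cap \mathcal{P} = \{\beta\}$. The subordinate equation then gives $\alpha(C_\beta) \cdot N(I^M_s, C_\beta) = 0$. I would check that $N = 1$, which holds unless the orbit of $\beta$ revisits $I^M_s$. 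Revisiting would put a later orbit point adjacent to $\beta$, contradicting Step 1 (non-touching, outside $X$), and equality would mean $\beta$ lands on itself, making it periodic and hence in $X$. So $\alpha(C_\beta) = 0$.

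\textbf{Main obstacle.} The delicate point is Step 1 at the boundary between the orbit outside $X$ and the vectors living in $X$. Could some distinguished point from the $J_0$-orbit or a $J_i$-orbit sit in the same element as $\beta$, adjacent to it, without violating (a)? By the maximality of $\mathcal{I}^M$ it can only be a touching point, and a point touching $\beta^\pm$ is $\beta^\mp$, which is separated by the continuity partition. A geometric first-landing point equal to $\beta$ is ruled out because $\beta \notin X$ while first-landing points are in $J_0 \subset X$. Settling this case analysis carefully, including the signed/geometric conventions, is where I expect the real work to be.
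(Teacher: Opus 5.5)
Your proposal is correct and is essentially the paper's argument, which is your Step 2. Since $\mathcal{I}^M$ refines $\mathcal{I}^0$, $\beta$ is an endpoint of its element. Any non-touching neighbour would form a pair satisfying (a) and (b) of Lemma~\ref{lem:dist-pair}, because $\beta\notin X$ cannot lie in any $T^k(J^0_j)$ or $T^k(J_i)$, and this contradicts maximality. Hence $\beta$ is the only distinguished point in its element, and the $\mathcal{A}$-subordinate equation reduces to $\alpha(C_\beta)=0$. The general isolation claim of Step 1 for later orbit points and the separate check that $N(I^M_s,C_\beta)=1$ are unnecessary, since $I^M_s\cap\mathcal{P}=\{\beta\}$ already gives this.
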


\begin{proof}
We will prove this for a $+$-type discontinuity $\beta$, with the proof for a $-$-type discontinuity being analogous. Let $\mathcal{I}^M$ be the maximal partition as in Definition \ref{def:max-part}. Assume that there is another point $p \in \mathcal{P}$ contained in the element of the partition $I^M_s$ containing $\beta$. As $\beta$ is a discontinuity, it must be the leftmost point $I^M_s$, as $\mathcal{I}^M$ is a refinement of $\mathcal{I}^0$. Thus we may assume that $\beta$ and $p$ are neighbouring, but not touching. As $\beta \notin X$, this pair of points is not contained in a periodic interval. Hence this pair of points satisfies properties (a) and (b) from Lemma \ref{lem:dist-pair}, so the partition $\mathcal{I}^M$ can in fact be refined, which is a contradiction with its maximality. Thus there is no other distinguished point in $I^M_s$. As $\mathcal{I}^M$ is an $\mathcal{A}$-subordinate partition, we have that $\alpha(v_{\beta}) = 0$, as this is exactly the equation which $I^M_s$ needs to satisfy to be $\mathcal{A}$-subordinate.
\end{proof}
As all of these coefficients are zero, they do not contribute to the equation \eqref{eq:lin-dep}. Thus we may exclude their corresponding vectors and points from $\mathcal{V}$ and $\mathcal{P}$, respectively. This means that all points in $\mathcal{P}$ are now contained in $X$. This immediately gives the following corollary, which proves a part of Theorem \ref{thm:lin-dep} for the first landing vectors corresponding to points in the interior of $J_0$:

\begin{corollary}
In conclusion \eqref{eq:lin-dep-equality2} of Theorem \ref{thm:lin-dep}, we have that $\alpha_j^0 = \alpha^{0,+}(j,1) + \alpha^{0,-}(j,1)$ for all $1 \le j \le N_0-1$.
\end{corollary}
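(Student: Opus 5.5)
We will obtain this from the $\bm{f}$-coefficient equations in \eqref{eq:lin-dep}, combined with the fact that, after Corollary \ref{cor:alpha_x=0}, the vectors $C_\beta$ for $\beta \in \mathcal{C}_{\neg X}$ may be discarded. Fix $1 \le j \le N_0-1$ and let $\beta := \beta^0(j)$ be the geometric discontinuity on which $a_j^0$ first lands. Its signed parts $\beta^{0,+}(j,1)$ and $\beta^{0,-}(j,1)$ are the first points of the orbits encoded by the vectors that follow $L^0_j$: either $C^{0,\pm}(j,1)$, or the return vectors $R^{0,\pm}_j$ if $m_j^{0,\pm}=1$. We then take the $\bm{f}_{\text{ind}(\beta)}$-coefficient of \eqref{eq:lin-dep}, exactly as at the end of the proof of Proposition \ref{prop:refine}.

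The vectors with a nonzero $\bm{f}_{\text{ind}(\beta)}$-entry are of three kinds.
\begin{enumerate}
\item Vectors $w$ with $w_{first} \in \{\beta^+,\beta^-\}$. Each contributes $+1$ times its coefficient.
\item Vectors $w$ whose endpoint $T(w_{last})$ is $\beta$, $\beta^+$ or $\beta^-$. These are critical connection vectors (entry $-1$) and first landing vectors (entry $-1$ by Definition \ref{def:lan-vec}).
\item Return vectors. Their $\bm{f}$-entry sits at the index of their first point, so they already fall under the first kind.
\end{enumerate}
This gives
\[
\alpha(w_{\beta^+})+\alpha(w_{\beta^-}) - \sum_{T(w_{last})\in\{\beta,\beta^\pm\}} \alpha(w) = 0 .
\]

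The main step is to identify the vectors in these sums. The first point of a vector is a discontinuity only for a critical connection or return vector starting there. Since $a_j^0$ first lands on $\beta$ at time $l_j$ and $\beta\in X$, the orbit of $\beta^{\pm}$ before returning to $J_0$ is tracked by the $J_0$-itineraries. We therefore expect $w_{\beta^\pm}$ to be the vector with coefficient $\alpha^{0,\pm}(j,1)$ (or $\alpha^{0,\pm}_j$ when $m^{0,\pm}_j=1$, consistent with the indexing in \eqref{eq:lin-dep-equality1}).

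For the second sum we must show that the only vector landing on $\beta$ is $L^0_j$. This uses the disjointness of orbits and the bijectivity of $T|_{A_1'}$ (Lemma \ref{lem:t-bij-on-A1'}). Any critical connection or first landing vector ending at $\beta\in X$ has its orbit segment in $X$. Pulling back along $T^{-1}|_{A_1'}$, the preimage chain of $\beta$ is unique, so it must pass through $a_j^0$ at time $l_j$ before any other discontinuity. This is because $l_j$ is the first landing time and no discontinuity lies in the interior orbit of $J^0_j$ before that.

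Two cases could break this. First, a critical connection from another discontinuity could land on $\beta$ inside the orbit of $J_0$. This is ruled out by the uniqueness of preimages together with the definition of $l_j$ as a first landing. Second, $\beta$ could lie in the orbit of some $J_i$, $i\ge 1$; disjointness of orbits excludes this. Vectors associated with $\mathcal{C}_{\neg X}$ have zero coefficients and were removed. The equation then reads $\alpha^{0,+}(j,1)+\alpha^{0,-}(j,1)-\alpha^0_j=0$, which is the claim.

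The main obstacle is this uniqueness of incoming vectors. In particular, one must check that no return-vector orbit or other $J_0$-itinerary has a last point mapping onto $\beta$. Return vectors end on entry into $J_0$, not on discontinuities, and critical connections end on their next discontinuity along the same unique backward chain. Both facts should follow directly from the definitions once we invoke Lemma \ref{lem:t-bij-on-A1'}.
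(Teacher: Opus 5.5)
Your proposal is correct and matches the paper's argument: after using Corollary~\ref{cor:alpha_x=0} to discard the $\mathcal{C}_{\neg X}$ vectors, you read off the $\bm{f}_{\text{ind}(\beta^0(j))}$-coefficient of \eqref{eq:lin-dep}, to which only $L^0_j$ and the two vectors starting at $\beta^{0,\pm}(j,1)$ contribute. The paper simply asserts that these are the only three contributors. You additionally justify this via injectivity of $T|_{A_1'}$ (Lemma~\ref{lem:t-bij-on-A1'}), minimality of the first landing time $l_j$, and disjointness of orbits, and you treat the case $m_j^{0,\pm}=1$, where the outgoing vector is $R^{0,\pm}_j$.
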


\begin{proof}
By Corollary \ref{cor:alpha_x=0}, for all $1 \le j \le N_0-1$, there exist only three vectors $v \in \mathcal{V}$ such that the $\bm{f}$-coefficient at ind($\beta^0(j)$) is non-zero. These are exactly $L_j^0, C^{0,+}(j,1)$ and $C^{0,-}(j,1)$. As the $\bm{f}$-coefficients at ind($\beta^0(j)$) in equation \eqref{eq:lin-dep} have to cancel out, we get that:
\[
\alpha_j^0 = \alpha^{0,+}(j,1) + \alpha^{0,-}(j,1)
\]
for all $1 \le j \le N_0-1$.
\end{proof}
We may now assume that the points and vectors associated to the first landing vectors $L_j^0$, for all $1 \le j \le N_0-1$, are not contained in $\mathcal{P}$ and $\mathcal{V}$, respectively. Thus $\mathcal{P}$ now contains only signed points. We can make this assumption because we will be able to deduce the remaining equalities from the conclusions \eqref{eq:lin-dep-equality1} and \eqref{eq:lin-dep-equality2} of Theorem \ref{thm:lin-dep} without using these first landing vectors. 

The maximality of $\mathcal{I}^M$ also gives the following corollary:

\begin{corollary}
\label{cor:a-subord-int-in-X}
All maximal intervals contained in $X$ that do not contain discontinuities in their interiors are $\mathcal{A}$-subordinate intervals.
\end{corollary}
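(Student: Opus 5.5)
The plan is to show that any maximal interval $K\subset X$ without discontinuities in its interior is a union of elements of the maximal partition $\mathcal{I}^M$, or at least that its set of distinguished points coincides with the distinguished points of a union of such elements. Then $\mathcal{A}$-subordination of $K$ follows by summing the defining equations $\sum_{v}\alpha(v)N(I^M_s,v)=0$ over those elements.

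The first step is to look at the two endpoints of $K$. The left endpoint is either the left endpoint of an interval component of $X$ or a $+$-type discontinuity, and the right endpoint is either a right boundary point of $X$ or a $-$-type discontinuity. A discontinuity endpoint is automatically a cut point of $\mathcal{I}^M$, since $\mathcal{I}^M$ refines $\mathcal{I}^0$.

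If an endpoint of $K$ is a boundary point of $X$, I would argue as follows. After Corollary \ref{cor:alpha_x=0}, every remaining distinguished point lies in $X$. Suppose some element $I^M_s$ contains distinguished points both inside $K$ and outside $K$. Take the neighbouring pair $p_1\in K$, $p_2\notin K$ (or the reverse) that straddles the boundary point of $X$. These points are not touching, because a gap of the complement of $X$, or a discontinuity, separates them. They also cannot lie in a common iterate $T^k(J^0_j)$ or $T^k(J_i)$, since such an iterate is an interval inside $X$ with no discontinuity in its interior, hence contained in a single $K$. So conditions (a) and (b) of Lemma \ref{lem:dist-pair} hold, and that lemma lets Proposition \ref{prop:refine} be applied, contradicting the maximality of $\mathcal{I}^M$ (Definition \ref{def:max-part}). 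Hence the distinguished points of $K$ are exactly the distinguished points of those elements $I^M_s$ that meet $K$ in $\mathcal{P}$. Summing the $\mathcal{A}$-subordinate equations over these elements gives $\sum_v\alpha(v)N(K,v)=0$.

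The main obstacle is the case where $p_1,p_2$ touch, or straddle a point that is not a genuine boundary point of $X$. I would first check that a touching pair $x^-,x^+$ across the boundary of $K$ forces $x$ to be a discontinuity, so $x$ is already a cut point of $\mathcal{I}^0$. I would then verify that neighbouring distinguished points in different components of $X$ are never in the same orbit iterate of $J_0$ or of a $J_i$. With these checks, condition (b) holds and the contradiction goes through.
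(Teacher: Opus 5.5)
Your argument is essentially the paper's. Maximality of $\mathcal{I}^M$ (via Lemma~\ref{lem:dist-pair} and Proposition~\ref{prop:refine}) forbids an element of $\mathcal{I}^M$ from containing distinguished points of two different maximal discontinuity-free intervals of $X$, since a straddling neighbouring pair would satisfy (a) and (b); $\mathcal{A}$-subordination of $K$ then follows from that of the elements. The only difference is that the paper also asserts each such $K$ lies inside a single element of $\mathcal{I}^M$, whereas your summation over all elements meeting $K\cap\mathcal{P}$ does not need that claim, which makes it the more robust way to finish.
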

\begin{proof}
We claim that every element $I^M_s$ of the partition, $I^M_s \cap X$ contains at most one interval that contains points in $\mathcal{P}$. Indeed, if there were at least two such intervals in this intersection, then we could choose a pair of points from two such neighbouring intervals as the pair of points from Lemma \ref{lem:dist-pair}, which would mean we can further refine $\mathcal{I}^M$, and this is a contradiction with maximality of $\mathcal{I}^M_s$. Since $\mathcal{I}^M$ is a refinement of $\mathcal{I}^0$ and since in Proposition \ref{prop:refine} we always split along a gap interval $G$ not containing any points in $\mathcal{P}$ in its interior, a maximal interval $H$ contained in $X$ that does not contain discontinuities in its interior and has non-empty intersection with $\mathcal{P}$ is contained in a single element $I^M_s$ of the partition $\mathcal{I}^M$. By Corollary \ref{cor:alpha_x=0}, all of the points in $\mathcal{P} \cap I^M_s$ are contained in a single such interval $H$. Since $I^M_s$ is an $\mathcal{A}$-subordinate interval, this gives that $H$ is also an $\mathcal{A}$-subordinate interval. Since intervals contained in $X$ that do not contain points in $\mathcal{P}$ are trivially $\mathcal{A}$-subordinate, the proof follows.
\end{proof}

\subsection{Refining partitions into $\mathcal{A}$-subordinate intervals}
\label{subsec:refine-alpha-subord}

Our next goal is to partition the intervals of $X$ into smaller $\mathcal{A}$-subordinate intervals. These smaller intervals can then be further partitioned, and we will show that we may continue with this procedure until all intervals of the form $T^k(J_j^0)$, for $1 \le j \le N_0-1$ and $l_j^0 \le k < r_j^0$, and $T^k(J^i)$, for $1 \le i \le n$ and $k \ge 0$, are elements of this partition. To do this, we will show how a partition of an interval into smaller $\mathcal{A}$-subordinate intervals may be further refined by `pushing forward' the partition of the preimage of this interval.

Let $H = [a,b) \neq J_0$ be an interval component of $X$. Assume that $H$ satisfies the following two properties:
\begin{enumerate}
\label{H-properties}
\item There is a finite number of points $a = h_0 < h_1 < \dots < h_k < h_{k+1} = b$ such that the intervals $H_0 = [h_0, h_1), \dots, H_k = [h_k,h_{k+1})$ are $\mathcal{A}$-subordinate, and such the intervals $(h_q^+,h_{q+1}^-)$ contain no discontinuities for all $0 \le q \le k$; 
\item The preimage of $H$ in $X$ can be partitioned into $\mathcal{A}$-subordinate intervals $G_0 = [g_0,g_1), G_1=[g_2,g_3), \dots, G_l=[g_{2l},g_{2l+1})$, such that the intervals $(g_{2r}^+, g_{2r+1}^-)$ contain no discontinuities for all $0 \le r \le l$. We allow for $g_{2r+1}=g_{2r+2}$, i.e.\ the intervals could touch at their boundary points.     
\end{enumerate} 
We will refer to these intervals as $H$-intervals and $G$-intervals, respectively. We distinguish between the following two types of points in $H$:
\begin{itemize}
    \item $h_0^+,h_1^{\pm},\dots,h_{k+1}^-$ will be called \textit{Type-1 points};
    \item $T(g_0^+), T(g_1^-) \dots, T(g_{2l+1}^-)$ will be called \textit{Type-2 points}.
\end{itemize}
We will refer to any point that is either a Type-$1$ or a Type-$2$ point as \textit{special}. Note that a point can be both a Type-$1$ and a Type-$2$ point. For example, this is the case for boundary points $a^+$ and $b^-$.

Let $H_0' = [h_{0}',h_{1}'), H_1'=[h_{1}',h_{2}'), \dots, H_m'=[h_{m}',h_{m+1}')$ be the maximal intervals in $H$, such that the intervals $(h_{t}'^+,h_{t+1}'^-)$ contain no special points for all $0 \le t \le m$. In other words, the boundary of each $H_t'$ consists of two neighbouring, but not touching, special points. We will refer to these intervals as $H'$-intervals. These intervals form a partition of $H$, which is a refinement of the partition into $H_0, H_1, \dots, H_k$.

Recall that because of Corollary \ref{cor:alpha_x=0} all of the points in $\mathcal{P}$ are contained in $X$. Since $H \neq J_0$, every point $p \in \mathcal{P} \cap H$ not equal to some $\beta^{0,\pm}(j,1)$, for $1 \le j \le N_0-1$, has a unique preimage in $X \cap \mathcal{P}$. Moreover, $p$ is contained in the same $P(v)$ as its unique preimage in $X \cap \mathcal{P}$ if and only if $p$ is not a discontinuity of $T$. This fact will be repeatedly used in the proof below. It will also be convenient to introduce the following notation: for a $\beta \in X$ that is not equal to some $\beta^{0,\pm}(j,1)$, for $1 \le j \le N_0-1$, denote by $v^{-1}_{\beta}$ the unique vector in $\mathcal{V}$ such that $T(v_{last}) = \beta$. This vector is now unique because of Corollary \ref{cor:alpha_x=0}. For a discontinuity $\beta^{0,\pm}(j,1)$, with $1 \le j \le N_0-1$, we define $v^{-1}_{\beta^{0,\pm}(j,1)} = 0$. Recall also that $v_{\beta}$ is the unique vector in $\mathcal{V}$ such that $v_{first} = \beta$.

\begin{proposition}[Push-forward of a partition]
\label{prop:push-fwd}
Let $H$ satisfy the two assumptions in \ref{H-properties}. Then the intervals $H_0', \dots, H_m'$ are also $\mathcal{A}$-subordinate intervals. Moreover, for every discontinuity contained in $H$ that is also a Type-$2$ point, we have that:
\[
-\alpha(v^{-1}_{\beta}) + \alpha(v_{\beta}) = 0.
\]
\end{proposition}

\begin{proof}
The proof goes by induction, starting with the leftmost interval $H_0'$ and going to the right. The induction has two hypotheses:
\begin{itemize}
    \item $H_z'$ is an $\mathcal{A}$-subordinate interval for all $z \le t$;
    \item For each discontinuity $\beta$ of $T$ contained in these intervals that is also a Type-$2$ point, we have that:
    \[
    -\alpha(v^{-1}_{\beta}) + \alpha(v_{\beta}) = 0.
    \]
\end{itemize}
We now prove the base of the induction for $t=0$. By assumption, none of the $H$-intervals contains discontinuities in their interior. The proof depends on the type of the boundary point $h_1'$ of $H_0'$. There are $3$ different cases.

\underline{Case 1: $h_1'$ is only a Type-$1$ point}
\vspace{1mm}

Then $H_0' = H_0$, so the first hypothesis follows. Assume first that $a$ is not a discontinuity of $T$. In this case, even if $h_1'$ is a discontinuity of $T$, it is not of Type-$2$, so the second hypothesis follows. If $a^+ = \beta^+$, then since $H$ is assumed to be a component of $X$, we know that $\beta^- \notin X$. Thus the only vectors in $\mathcal{V}$ for which the $\bm{f}$-coefficient at ind($\beta^+$) is non-zero are $v_{\beta^+}$ and $v^{-1}_{\beta^+}$. Thus:
\[
-\alpha(v^{-1}_{\beta^+}) + \alpha(v_{\beta^+}) = 0,
\]
so the second hypothesis follows in this case as well.

\underline{Case 2: $h_1'$ is only a Type-$2$ point}
\vspace{1mm}

This means that $h_1'$ is not a discontinuity of $T$, as discontinuities are Type-$1$ points by the assumption that $H$-intervals do not contain discontinuities in their interiors. Moreover, $H_0' = T(G_r)$, for some $0 \le r \le l$. Assume that $a$ is not a discontinuity of $T$. Then the second hypothesis follows, as there are no discontinuities in $H_0'$. Moreover, all of the points in $H'_0 \cap \mathcal{P}$ belong to the same $P(v)$ as their preimages in $G_r$ and thus $N(H_0', v) = N(G_r, v)$ for all $v \in \mathcal{V}$. Thus $H'_0$ is an $\mathcal{A}$-subordinate interval, because $G_r$ is. Same as in the previous case, if $a^+ = \beta^+$, then:
\[
\alpha(v^{-1}_{\beta^+}) - \alpha(v_{\beta^+}) = 0,
\]
so the second hypothesis follows.

\underline{Case 3: $h_1'$ is both a Type-$1$ and a Type-$2$ point}
\vspace{1mm}

This means that $H_0' = H_0 = T(G_r)$, for some $0 \le r \le l$, so the first hypothesis follows. Assume that $a$ is not a discontinuity of $T$. If the right boundary point $h_1'$ of $H'_0$ is not a discontinuity of $T$, the second hypothesis trivially follows. If $h_1'^- = \betas^-$ is a discontinuity of $T$, then $N(H_0', v) = N(G_r, v)$ for all $v \in \mathcal{V}$, except for $v_{\betas}$ and $v^{-1}_{\betas}$. Thus:
\[
0 = \sum_{v \in \mathcal{V}} \alpha(v) N(G_r, v) - \sum_{v \in \mathcal{V}} \alpha(v) N(H'_0, v) = \alpha(v^{-1}_{\betas^-}) - \alpha(v_{\betas^-}),
\]
so the second hypothesis follows. If we assume $a^+ = \beta^+$, then as in Case $1$, we get that:
\[
\alpha(v^{-1}_{\beta^+}) - \alpha(v_{\beta^+}) = 0.
\]
If $h_1'$ is not a discontinuity of $T$, the second hypothesis follows, so we may assume $h_1'^-=\betas^-$. Then we have that:
\[
0 = \sum_{v \in \mathcal{V}} \alpha(v) N(G_r, v) - \sum_{v \in \mathcal{V}} \alpha(v) N(H'_0, v) = \alpha(v^{-1}_{\beta^+}) - \alpha(v_{\beta^+}) + \alpha(v^{-1}_{\betas^-}) - \alpha(v_{\betas^-}),
\]
which implies:
\[
\alpha(v^{-1}_{\betas^-}) - \alpha(v_{\betas^-}) = 0,
\]
so the second hypothesis follows.

Assuming the two induction hypotheses, we now want to prove that $H_{t+1}' = [h_{t+1}',h_{t+2}')$ is also an $\mathcal{A}$-subordinate interval, for some $t \ge 0$. The proof now depends on the type of both $h_{t+1}'^+$ and $h_{t+2}'^-$, so there are $4$ cases.

\underline{Case 1: Both are only Type-$1$ points}
\vspace{1mm}

Then $H_{t+1}' = H_q$ for some $q$, so we are done, because in this case, even if the boundary points are discontinuities, they are only of Type-$1$, so we need not check the second inductive hypothesis. 

\underline{Case 2: Both are only Type-$2$ points}
\vspace{1mm}

Then $H_{t+1}' = T(G_r)$ for some $r$, and neither of the boundary points is a discontinuity of $T$. Thus the first inductive hypothesis follows from the fact that $N(H_0', v) = N(G_r, v)$ for all $v \in \mathcal{V}$, and the second because there are no discontinuities in $H'_{t+1}$.

\vspace{1mm}

\underline{Case 3: $h_{t+1}'^+$ is a Type-$1$ point and $h_{t+2}'^-$ is a Type-$2$ point}

\vspace{1mm}

Then $H_{t+1}'$ is contained in the image of the interval $G_r$ for which $T(g_{2r+1}^-) = h_{t+2}'^-$. Let $H'_{u}, \dots, H'_{t}$ be all of the other $H'$-intervals contained in $T(G_r)$ and note that they form a partition of $T(G_r)$. Thus we have that:

\[
\sum_{z=u}^{t+1} N(H'_z, v) = N(G_r,v)
\]
for all $v$ except those for which either $v_{first} = \beta$ or $T(v_{last}) = \beta$, where $\beta$ is a discontinuity contained in $T(G_r)$. Hence the following holds:

\begin{align}
\label{eq:h-t+1}
\begin{split}
\sum_{v \in \mathcal{V}} \alpha(v) N(H'_{t+1}, v) &= \sum_{v \in \mathcal{V}} \alpha(v) N(G_r, v) - \sum_{z=u}^t \left( \sum_{v \in \mathcal{V}} \alpha(v) N(H'_z, v) \right) \\ &+ \sum_{\beta \in T(G_r)} \left( -\alpha(v_{\beta}) + \alpha(v^{-1}_{\beta}) \right) \\
&= \sum_{\beta \in T(G_r)} \left( -\alpha(v_{\beta}) + \alpha(v^{-1}_{\beta}) \right).
\end{split}
\end{align}
Here we sum over all signed discontinuities $\beta \in T(G_r)$. We now need to consider all of the discontinuities contained in $T(G_r)$. The analysis is different for discontinuities on the boundary and those in the interior. For a discontinuity $\beta$ contained in the interior of $T(G_r)$, we know that:

\begin{equation}
\label{eq:H'-int-disc}
-\alpha(v_{\beta^+}) + \alpha(v^{-1}_{\beta^+}) -\alpha(v_{\beta^-}) + \alpha(v^{-1}_{\beta^-}) = 0,
\end{equation}
by the assumption of Theorem \ref{thm:lin-dep} and by Corollary \ref{cor:alpha_x=0}. If the left boundary point $h'^+_{t+1}$ of $T(G_r)$ is a discontinuity $\betas^+$, then:
\begin{equation}
\label{eq:H'-bdry-disc}
-\alpha(v_{\betas^-}) + \alpha(v^{-1}_{\betas^-}) = 0,
\end{equation}
by the second inductive hypothesis. If the right endpoint $h'^-_{t+2}$ of $T(G_r)$ is a discontinuity $\betass^-$, then it is a Type-$1$ point as well, so $H'_{t+1}$ is an $H$-interval by the first assumption about the $H$-partition, and thus an $\mathcal{A}$-subordinate interval. Putting equations \eqref{eq:h-t+1}, \eqref{eq:H'-int-disc} and \eqref{eq:H'-bdry-disc} together gives:

\begin{align*}
0 = &\sum_{v \in \mathcal{V}} \alpha(v) N(H'_{t+1}, v) \\
\overset{\eqref{eq:h-t+1}}{=} &\sum_{\beta \in T(G_r)} \left( -\alpha(v_{\beta}) + \alpha(v^{-1}_{\beta}) \right) \\
\overset{\eqref{eq:H'-int-disc}\&\eqref{eq:H'-bdry-disc}}{=} &-\alpha(v_{\betas^+}) + \alpha(v^{-1}_{\betas^+}), 
\end{align*}
This proves the second inductive hypothesis, so the inductive step is done in this case. Thus we may assume that $h_{t+2}'$ is not a discontinuity. Then by equations \eqref{eq:h-t+1}, \eqref{eq:H'-int-disc} and \eqref{eq:H'-bdry-disc} we have that:

\begin{align*}
0 \overset{\eqref{eq:H'-int-disc} \& \eqref{eq:H'-bdry-disc}}{=} &\sum_{\beta \in T(G_r)} \left( -\alpha(v_{\beta}) + \alpha(v^{-1}_{\beta}) \right) \\
\overset{\eqref{eq:h-t+1}}{=} &\sum_{v \in \mathcal{V}} \alpha(v) N(H'_{t+1}, v), 
\end{align*}
so $H'_{t+1}$ is an $\mathcal{A}$-subordinate interval in this case as well. Finally, if the left boundary point $h'^+_{t+1}$ of $T(G_r)$ is not a discontinuity, then by equations \eqref{eq:h-t+1} and \eqref{eq:H'-int-disc} we have that:
\begin{align*}
0 \overset{\eqref{eq:H'-int-disc}}{=} &\sum_{\beta \in T(G_r)} \left( -\alpha(v_{\beta}) + \alpha(v^{-1}_{\beta}) \right) \\
\overset{\eqref{eq:h-t+1}}{=} &\sum_{v \in \mathcal{V}} \alpha(v) N(H'_{t+1}, v),
\end{align*}
which finishes the proof.

\underline{Case 4: $h_{t+1}'$ is a Type-$2$ point and $h_{t+2}'$ is a Type-$1$ point}
\vspace{1mm}

Then similarly as in the previous case, $H'_{t+1}$ is contained in a larger $H$-interval $H_q$, which is partitioned into $H'$-intervals $H'_u, \dots, H'_t$, which are $\mathcal{A}$-subordinate by the inductive hypothesis, and $H'_{t+1}$. Similarly to the previous case, we have that:

\begin{align*}
\sum_{v \in \mathcal{V}} \alpha(v) N(H'_{t+1}, v) &= \sum_{v \in \mathcal{V}} \alpha(v) N(H_q, v) - \sum_{z=u}^t \left( \sum_{v \in \mathcal{V}} \alpha(v) N(H'_z, v) \right) \\
&+ \sum_{\beta \in H_q} \left( -\alpha(v_{\beta}) + \alpha(v^{-1}_{\beta}) \right) \\
&= \sum_{\beta \in H_q} \left( -\alpha(v_{\beta}) + \alpha(v^{-1}_{\beta}) \right).
\end{align*}
From here we simply need to consider whether the boundary points of $H_q$ are discontinuities, and the proof follows analogously as in the previous case.
\end{proof}

It is interesting that the analogous statement for the `pull back' of a partition does not hold. A simple counterexample occurs when the interval $J_0$ does not contain a discontinuity, but its image $T(J_0)$ contains a single discontinuity $\beta$ in its interior. In this case $T(J_0)$ is partitioned into two $\mathcal{A}$-subordinate intervals $T(J_1)$ and $T(J_2)$, where $T(J_1)$ and $T(J_2)$ touch at $\beta$. Thus if we consider $H = T(J_0)$, the pull-back version of Proposition \ref{prop:push-fwd} would give us that $J_1$ and $J_2$ are $\mathcal{A}$-subordinate. This would mean that $\alpha_j^0 = \alpha^{0,+}(j,1)$ and $\alpha^{0,-}(j,1) = 0$, where $a_j$ is the discontinuity in $J-0$ that lands on $\beta$. By Theorem \ref{thm:lin-dep}, this does not necessarily hold, so in general we can not `pull back' a partition.

\subsection{Proof of Theorem \ref{thm:lin-dep}}
\label{subsec:proof-of-lin-dep}
In this subsection, we finish the proof of Theorem \ref{thm:lin-dep}. We do this by continuing with the process of refining partitions started in Subsection \ref{subsec:refining-part}. This will be done by refining the maximal partition $\mathcal{I}^M$ from Definition \ref{def:max-part}. This refinement is obtained by further partitioning the maximal intervals in $X$ that do not contain discontinuities in their interiors into smaller intervals.

As we will now care only about the partitions of the interval components of $X$, we introduce a new label for such partitions. We will say that a partition $\mathcal{I}_*$ of the components of $X$ into $\mathcal{A}$-subordinate intervals is an \textit{$\mathcal{A}X$-partition}. Such a partition is defined by a finite set of points that are contained in the interiors of intervals of $X$, which we call \textit{the break points} of $\mathcal{I}_*$. By Corollary \ref{cor:a-subord-int-in-X}, the partition of $X$ into maximal intervals that do not contain discontinuities in their interiors is an $\mathcal{A}X$-partition. As we do not need to consider $\mathcal{I}^M$ as a partition of $I$ anymore, we will denote this $\mathcal{A}X$-partition by $\mathcal{I}^M$ as well. Thus the set of break points $\mathcal{B}_0$ of $\mathcal{I}^M$ is the union of the boundary points of all maximal intervals contained in $X$ that do not contain a discontinuity in their interiors. This set is clearly the union of all discontinuities of $T$ and all the boundary points of $X$. We will show how to inductively refine this partition by applying Proposition \ref{prop:push-fwd}.

More precisely, we will show by induction that for each $m \ge 0$, there is an $\mathcal{A}X$-partition $\mathcal{I}^{M+m}$ such that its set of break points $\mathcal{B}_m$ is equal to the union of the following two types of points:

\begin{enumerate}
    \item Points $z \in X$ such that there exists a $q \le m$ and a discontinuity $\beta \in X$ such that $z = T^q(\beta)$ and $T^r(\beta) \notin J_0$ for all $0 < r \le q$;
    \item Points $z \in X$ such that there exists a $q \le m$ and a boundary point $x$ of $X$ such that $z = T^q(x)$, and $T^r(x) \notin J_0$ for all $0 < r \le q$.
\end{enumerate}
These two properties clearly imply that $\mathcal{B}_{m+1} \supset \mathcal{B}_m$, so $\mathcal{I}^{M+m+1}$ is a refinement of $\mathcal{I}^{M+m}$.

\begin{proof}[Proof of Theorem \ref{thm:lin-dep}]
The base of induction is given by Lemma \ref{lem:dist-pair} and Proposition \ref{prop:refine}. Assuming we have a level $m$ partition $\mathcal{I}^{M+m}$ such that its set of break points $\mathcal{B}_m$ is equal to the union of the two types of points described above, we show how to refine it into a level $m+1$ partition $\mathcal{I}^{M+m+1}$. We describe this refinement in terms of the new break points that get added to $\mathcal{B}_m$ by applying Proposition \ref{prop:push-fwd}.

Let $H$ be any component of $X$ not equal to $J_0$. By the inductive assumption, $\mathcal{I}^{M+m}$ induces a partition of $H$ into $\mathcal{A}$-subordinate intervals. We may take these $\mathcal{A}$-subordinate intervals to be the $H$-intervals from Proposition \ref{prop:push-fwd}. As $H$ is an interval component of $X$, its $T$-preimage in $X$ consists of maximal intervals that do not contain discontinuities in their interiors, which are $\mathcal{A}$-subordinate by Corollary \ref{cor:a-subord-int-in-X}. As $\mathcal{I}^{M+m}$ is a refinement of $\mathcal{I}^M$, these preimage intervals are further partitioned into $\mathcal{A}$-subordinate intervals, and we may take these intervals to be the $G$-intervals from Proposition \ref{prop:push-fwd}. Hence, by Proposition \ref{prop:push-fwd}, the partition into $H$-intervals can be refined into a partition into $\mathcal{A}$-subordinate $H'$-intervals. The set of boundary points of these $H'$-intervals is equal to the union of the images of break points in $\mathcal{B}_m$ contained in the $G$-intervals and the break points in $\mathcal{B}_m \cap H$. This holds for any $H \neq J_0$, and thus for each break point $p \in \mathcal{B}_m$, we get that $T(p)$ is a break point of $\mathcal{B}_{m+1}$, provided $T(p) \notin J_0$. Thus every point of the first and second type described above is contained in $\mathcal{B}_{m+1}$. Thus, the partition $\mathcal{I}^{M+m+1}$ obtained by applying Proposition \ref{prop:push-fwd} has the desired properties, which completes the induction step.

Thus, for every sufficiently large $m$, every point of either of these two types is a break point of $\mathcal{B}_{m}$:

\begin{enumerate}
    \item Points $z \in X$ such that there exists a $q$ and a discontinuity $\beta \in X$ such that $z = T^q(\beta)$ and $T^r(\beta) \notin J_0$ for all $0 < r \le q$;
    \item Points $z \in X$ such that there exists a $q$ and a boundary point $x$ of $X$ such that $z = T^q(x)$ and $T^r(x) \notin J_0$ for all $0 < r \le q$;
\end{enumerate}
Let $\beta^{0,+}(j,k)$, for $0 \le j \le N_0-1$ and $k < m^{0,+}_j$, be a discontinuity in the orbit of $J_0$, and let $P(v)$ be the orbit associate to the vector $v$ such that $v_{first} = \beta^{0,+}(j,k)$. Because of $k < m^{0,+}_j$, we know that $T(v_{last}) \in \mathcal{P}$. For $m$ large enough, we therefore know that $v_{last}$ is a break point of $\mathcal{B}_m$. Thus if we apply Proposition \ref{prop:push-fwd} to the interval component $H$ of $X$ containing $T(v_{last})$, we get that:
\begin{equation}
\label{eq:alpha-eqcc}
\alpha^{0,+}(j,k) = \alpha^{0,+}(j,k+1),    
\end{equation}
if $k+1 < m^{0,+}_j$, or
\begin{equation}
\label{eq:alpha-eqcr}
\alpha^{0,+}(j,m^{0,+}_j-1) = \alpha^{0,+}(j,m^{0,+}_j),
\end{equation}
if $k+1 = m^{0,+}_j$. This is because $T(v_{last}) = \beta^{0,+}(j,k+1)$ is a Type-$2$ point. An analogous argument applies to discontinuities of the form $\beta^{0,-}(j,k)$, for $1 \le j \le N_0$ and $k < m^{0,+}_j$, $\beta^{i,+}(0,k)$, for $1 \le i \le n$ and $k \le m^{i,+}_{0}$, and $\beta^{i,-}(1,k)$, for $1 \le i \le n$ and $k \le m^{i,-}_{1}$. Thus combining equations \eqref{eq:alpha-eqcc} and \eqref{eq:alpha-eqcr} for all of these discontinuities gives:
\begin{align}
\label{eq:alpha-eq-ccr}
\begin{split}
&\alpha^{i,+}(0,1) = \dots = \alpha^{i,+}(0,m_0^{i,+}) \\
&\alpha^{i,-}(1,1) = \dots = \alpha^{i,-}(1,m_{1}^{i,-}); \\
&\alpha^{0,+}(j,1) = \dots = \alpha^{0,+}(j,m_j^{0,+}-1) = a_j^{0,+}; \\
&\alpha^{0,-}(j+1,1) = \dots = \alpha^{0,-}(j+1,m_{j+1}^{0,-}-1) = \alpha_{j+1}^{0,-}; \\
&\alpha_0^0 = \alpha^{0,+}(0,1); \\
&\alpha_{N_0}^0 = \alpha^{0,-}(N_0,1), \\
\end{split}
\end{align}
for all $1 \le i \le n$ and $0 \le j \le N_0-1$, respectively.

Next, for every $0 \le j \le N_0$, both $a^{0,+}_j$ and $a^{0,-}_{j+1}$ have to land on discontinuities before the interval $[a^{0,+}_j,a^{0,-}_{j+1})$ returns to $J_0$. Thus there is a time $q$, such that the points $T^q(a^{0,+}_j)$ and $T^q(a^{0,-}_{j+1})$ are both break points of $\mathcal{B}_m$, for a sufficiently large $m$. Thus the interval $[T^q(a^{0,+}_j),T^q(a^{0,-}_{j+1}))$ is $\mathcal{A}$-subordinate, so we get that
\begin{equation}
\label{eq:alpha-eq+-}
\alpha(v) = -\alpha(w),
\end{equation}
where $v$ and $w$ are the vectors such that $T^q(a^{0,+}_j) \in P(v)$ and $T^q(a^{0,-}_{j+1}) \in P(w)$, respectively. Moreover, $v$ is either a critical connection vector or a return vector related to the orbit of $a^{0,+}_j$, and $w$ is either a critical connection vector or a return vector related to the orbit of $a^{0,-}_{j+1}$. Again, an analogous argument applies to discontinuities of the form $\beta^{0,-}(j,k)$, for $1 \le j \le N_0$ and $k < m^{0,+}_j$, $\beta^{i,+}(0,k)$, for $1 \le i \le n$ and $k \le m^{i,+}_{0}$, and $\beta^{i,-}(1,k)$, for $1 \le i \le n$ and $k \le m^{i,-}_{1}$. Thus, by combining \eqref{eq:alpha-eq-ccr} and \eqref{eq:alpha-eq+-} for all of these discontinuities, we get both of the conclusions \eqref{eq:lin-dep-equality1} and \eqref{eq:lin-dep-equality2} from the statement of Theorem \ref{thm:lin-dep}.
\end{proof}

\addcontentsline{toc}{section}{References}
\printbibliography

\end{document}